\documentclass[11pt,a4paper]{article}
\usepackage{amssymb,latexsym}
 \usepackage[latin1]{inputenc}
\usepackage{a4}
\usepackage{bm, amsmath, amssymb, amsthm}
\usepackage{graphicx}

\topmargin=-18 true mm
\oddsidemargin=-2 true mm
\evensidemargin=-2 true mm
\setlength{\textheight}{249 true mm}  
\setlength{\textwidth}{158 true mm}

\def\<{\langle}
\def\>{\rangle}
\def\eps{\varepsilon}
\def\RR{\mathbb{R}}

\def\calf{\mathcal{F}}
\def\tr{\operatorname{Tr\,}}
\def\id{\operatorname{id\,}}
\def\Div{\operatorname{div}}

\def\Ric{\operatorname{Ric}}
\def\vol{\operatorname{vol}}
\newcommand{\Sm}{{\mbox{\rm S}}}

\def\eq{\hspace*{-1.5mm}&=&\hspace*{-1.5mm}}
\def\plus{\hspace*{-1.5mm}&+&\hspace*{-1.5mm}}
\def\minus{\hspace*{-1.5mm}&-&\hspace*{-1.5mm}}
\def\dt{\partial_t}

\newtheorem{thm}{Theorem}[section]
\newtheorem{cor}[thm]{Corollary}
\newtheorem{lem}[thm]{Lemma}
\newtheorem{prop}[thm]{Proposition}
\newtheorem{example}[thm]{Example}

\newtheorem{rem}{Remark}[section]


\title{The mixed Einstein-Hilbert action and extrinsic geometry of foliated manifolds}

\author{
       Elisabetta Barletta\footnotemark[1],\quad
       Sorin Dragomir\footnote{Dipartimento di Matematica, Informatica ed Economia,
       Universit\`{a} degli Studi della Basilicata,
       \newline
       e-mail: {\tt elisabetta.barletta@unibas.it}, {\tt sorin.dragomir@unibas.it}},\ \
       \ and \
       Vladimir Rovenski\footnote{Mathematical Department, University of Haifa, e-mail: {\tt rovenski@math.haifa.ac.il}}
       }


\begin{document}

\date{}

\maketitle

\begin{abstract}
We develop variation formulas for the quantities of extrinsic geometry
for adapted variations of metrics on almost-product (e.g. foliated) Riemannian manifolds,
and apply them to study the total mixed scalar curvature of a distribution
-- analogue of the classical Einstein-Hilbert action.
The mixed scalar curvature $\Sm_{\,\rm mix}$ is the averaged sectional curvature
over all planes that contain
vectors from both distributions of an almost-product structure
and the variations we consider preserve orthogonality of the distributions.
We derive the directional derivative $D J_{\,\rm mix}$ (of the total $\Sm_{\,\rm mix}$) for adapted variations of metrics
on closed almost-product manifolds and foliations of arbitrary dimension.
The obtained Euler-Lagrange equations are presented in two equiva\-lent forms:
in terms of extrinsic geometry and intrinsically using the partial Ricci tensor.
Certainly, these mixed field equations admit amount of solutions (e.g., twisted products).
\end{abstract}

\vskip 4mm\noindent
\textbf{Keywords}: {\small Foliation;
almost-product structure; mixed scalar curvature;
extrinsic geometry; adapted variation; conformal; mixed Einstein-Hilbert action;
twisted product}

\vskip1mm\noindent
\textbf{MSC (2010)} {\small Primary 53C12; Secondary 53C44.}

\section*{Introduction}\label{sec:intro}

Foliations (i.e., decompositions of manifolds into collections of submanifolds, cf.~\cite{cc1})
grew out of the theory of dynamical systems; many models in mechanics and relativity are foliated
(e.g., warped products).
Riemannian geometry of foliations is well developed since years and has different aspects, local and global, intrinsic and extrinsic, see survey in \cite{rov-m,rw-m}.
\textit{Extrinsic geometry} of a foliation describes the properties depending on the second fundamental form
of the leaves and its invariants. A Riemannian manifold may admit many kinds of geometrically interesting foliations: totally geodesic and Riemannian foliations are most popular examples.

The {problem of minimizing geometric quantities} has been very popular since
long time: recall, for example, classical isoperimetric inequalities, Fenchel estimates of total curvature of curves and estimates of total mean curvatures of compact submanifolds, see \cite{lr1996}.
In the context of foliations, Gluck and Ziller in 1986 considered the problem of
minimizing functions like volume, total energy and bending defined for $k$-plane fields
on Riemannian manifolds, one also has \cite{lp1993,lw2008,rw2008}.
 In~all the cases mentioned above, they consider a fixed Riemannian manifold $(M,g)$
and look for geometric objects (curves, hypersurfaces, foliations) minimizing geometric
quantities defined usually as integrals of curvatures of different types.

\label{R-EHa}\rm
The equations of mechanics can be also obtained as solutions of a~variational problem, using a~suitable functional, called the action on the configuration space. The {Einstein--Hilbert action} in general relativity
yields the Einstein's field equations through the principle of least action.
The gravitational part of the action is $J(g) = \frac1{k}\int_M \Sm(g)\,{\rm d}\vol_g$,
where $g$ is the metric of index~$1$,
$\Sm(g)$ is the \textit{scalar curvature} of the spacetime~$(M^4,g)$, $k=16\,\pi G/c^4$,
$G$ is the gravitational constant and $c$ is the speed of light in vacuum.
The integral is taken over the whole spacetime if it converges;
otherwise, a modified definition of $J(g)$ where one integrates over arbitrarily large, relatively compact domain $\Omega$, still yields the Einstein equations.
The equations in the presence of matter are given by adding the matter action to the Hilbert-Einstein action.

On the other hand, there is definite interest on prescribing geometric quantities of given objects
(see \cite{aub}, and for foliations, see \cite{cw2005,r2010,rz2012,sw}):
given a manifold $M$ and a geometric quantity $Q$ (function, vector or tensor field) one may search for
a Riemannian metric $g$ on $M$ for which a given geometric invariant (say, curvature of some sort)
coincides with $Q$.
 One of the principal problems of extrinsic geometry of foliations reads as follows:
 \textit{Given a foliated manifold $(M,\calf)$ and an extrinsic geometric property $($P$)$ of a submanifold,
 find a~Riemannian metric $g$ on $M$ such that the leaves of $\calf$ enjoy $($P$)$ with respect to~$g$}.
 For example, there exist metrics making a Reeb foliation (on the three-dimensional sphere) totally umbilical
but there is no metric making a Reeb foliation harmonic (i.e., the leaves become minimal submanifolds).

In \cite{rw-m} the new approach from the two we just described is presented to problems
in extrinsic geometry of codimension one foliations: given a foliated manifold $(M, \calf)$ and
a property $Q$ of a submanifold, depending on the principal curvatures of the leaves,
study Riemannian metrics, which minimize the integral of $Q$ in the class of $\calf$-truncated metrics
(i.e., the unit vector field $N$ orthogonal to $\calf$ is the same for all metrics of the variation family).
Certainly (like in some of the cases mentioned before) such Riemannian structures need not exist, but if they do, they usually have interesting geometric properties.

Let ${\rm Sym}^2(M)$ be the space of all symmetric $(0,2)$-tensors fields tangent to~$M$.
A semi-Riemannian (pseudo-Riemannian) metric of index $q$ on $M$ is an element $g\in{\rm Sym}^2(M)$
 such that each $g_x$ is a non-degenerate bilinear form of index $q$ on the tangent space $T_xM$ for all $x\in M$.
When $q=0$, that is $g_x$ is positive definite for all $x\in M$, we say that $g$ is a Riemannian metric (resp. Lorentz metric, when $q=1$).
If a distribution $\widetilde{\mathcal D}$ (i.e., a vector subbundle of the tangent bundle $TM$) is given, then a complementary distribution ${\mathcal D}$ to $\widetilde{\mathcal D}$ in $TM$ can be obtained. Indeed, since $M$ is paracompact and of class $C^\infty$, there exists a semi-Riemannian metric $g$ of class $C^\infty$.
Then we can take ${\mathcal D}$ as the orthogonal distribution to $\widetilde{\mathcal D}$ with respect to this metric. If $\widetilde{\mathcal D}$ is nondegenerate (i.e., $\widetilde{\mathcal D}_x$ is a nondegenerate subspace of the semi-Euclidean space $(T_x M,\, g_x)$ for every $x\in M$), then ${\mathcal D}$ is also nondegenerate.
Thus, we are entitled to consider a connected manifold $M^{n+p}$ with a semi-Riemannian metric $g$ and a pair
of complementary orthogonal nondegenerate distributions $\widetilde{\mathcal D}$ and ${\mathcal D}$ of ranks $\dim_{\,\RR}\widetilde{\mathcal D}_x=n$ and $\dim_{\,\RR}{\mathcal D}_x=p$ for every $x\in M$;
this is called an {almost-product structure} on $M$.
 A.Gray and B.O'Neill calculated the curvatures of the distributions from the curvature of $M$,
using configuration tensors
(these are analogs of the second fundamental form of a submanifold).
 Extending the definition, we shall say that \textit{extrinsic geometry}
of an almost-product structure describes the properties, which can be expressed using
configuration tensors (i.e., the integrability tensors and the second fundamental forms).
In~\cite{bf}, a~tensor calculus, adapted to decomposition
\begin{equation}\label{e:Decomposition}
 TM = \widetilde{\mathcal D} \oplus {\mathcal D},
\end{equation}
is developed to study the geometry of both the distributions and the ambient manifolds.
The~sectional curvature $K(X, Y)$, where $X\in{\mathcal D}_\calf$ and $Y\in{\mathcal D}$, is called \textit{mixed}.
The ``mixed components" of the curvature tensor (involved in the Jacobi equation) regulate the deviation of leaves along the leaf geodesics.
The \textit{mixed scalar curvature} $\Sm_{\,\rm mix}$ is the averaged mixed sectional curvature.
In relativity, it measures the relative acceleration of two particles moving forward on neighboring geodesics.

Our main objective is to develop variation formulas for the quantities of extrinsic geometry
for \textit{adapted} \textit{variations} of metrics on almost-product (e.g. foliated) Riemannian manifolds,
and to apply them to study to study Riemannian structures on a closed manifold $M$, minimizing integral
of the quantity $Q=\Sm_{\,\rm mix}$ for adapted (i.e., preserving the decomposition \eqref{e:Decomposition})
variations of metrics $g_t\ (g_0=g,\ |t|<\eps)$, to deduce the Euler--Lagrange equations and
characterize the critical metrics in certain distinguished classes of almost-product structures.
The cases of an open manifold $M$ and general variations of metrics will be studied in further~works.

This functional, called the \textit{mixed Einstein-Hilbert action}, is imitative of Einstein-Hilbert one,
except that the scalar curvature is replaced by the mixed scalar curvature.
Note that the quantity $Q=\Sm_{\,\rm mix}$ is built of the invariants of extrinsic geometry of both distributions, see~\cite{wa1}; hence, our study belongs to the extrinsic geometry of foliations.
The mixed Einstein-Hilbert action for a globally hyperbolic spacetime $(M^4,\,g)$ equipped with a~Bernal-S\'{a}nchez foliation was considered in~\cite{bdrs}, where Euler-Lagrangue equations (called the mixed gravitational field equations) were deduced using variation formulas for the curvature and Ricci tensor,
then their linea\-rization and solution for empty space were derived.

 Our approach
 is based on the variation formulas for the extrinsic geometry of foliations of arbitrary (co)dimension; thus, the paper shows some progress in methods of~\cite{rw-m}, where variation formulas and functionals were studied for codimension one foliations.
As~we shall see shortly, the Euler-Lagrange equations for the mixed Einstein-Hilbert action
involve several new tensors and a new type of Ricci curvature (introduced in~\cite{r2010}),
whose properties need to be further investigated.
The~questions for further study are: stability conditions,
and extrinsic geometry of critical metrics with respect to adapted (and general) variations,~etc.

The paper contains introduction and two chapters.

Section~\ref{sec:mixed-action} develops variation formulas for the quantities of extrinsic geometry
for {adapted} {variations} of metrics on Riemannian almost-product manifolds,
and applies them to study the mixed Einstein-Hilbert action on such manifolds.
Section~\ref{sec:main} contains applications of the above to foliated manifolds
(i.e., ${\mathcal D}$ is the normal subbundle to a foliation $\calf$).
The main goals are the Euler-Lagrange equations for almost-product structure (Proposition~\ref{T-main00})
and for foliations (Theorem~\ref{T-main01} and corollaries).
We look at several classes of foliations: characterize critical metrics and give examples.
 Throughout the paper everything (manifolds, distributions, foliations, etc.) is assumed to be
smooth (i.e., $C^{\infty}$-differentiable) and oriented.

\section{Mixed Einstein-Hilbert action on almost-product manifolds}\label{sec:mixed-action}

Using the natural representation of ${\rm O}(p)\times {\rm O}(n)$ on $TM$,
A.M. Naveira obtained (cf. \cite{N1983}) thirty-six distinguished classes of Riemannian almost-product manifolds
$(M,g,\widetilde{\mathcal D},{\mathcal D})$; half of them are foliated.
Following this line of research, several geometers completed the geometric interpretation, gave examples for each class, and studied the different classes of almost-product structures.
The~following convention is adopted for the range of indices
\[
 a,b,\ldots\in\{1,\ldots, n\},\quad
 i,j\,,\ldots\in\{1,\ldots, p\}.
\]

\subsection{The total mixed scalar curvature and the space of adapted metrics}\label{subsec:Smix}

The~\textit{mixed scalar curvature} function on $M$ is defined as (cf. \cite{rov-m, wa1})
\begin{equation}\label{eq-wal2}
 \Sm_{\,\rm mix} =\sum\nolimits_{\,a,i} 
 K(E_a, {\mathcal E}_{i})
 =\sum\nolimits_{\,a,i} 
 g(R^\nabla(E_a, {\mathcal E}_{i})E_a,\, {\mathcal E}_{i})\,,
\end{equation}
where 
 $\{E_a,\,{\mathcal E}_{i}\}$ is a local $g$-orthonormal frame on $TM$
such that $\{E_{a}\}$ are tangent to $\widetilde{\mathcal D}$.
Here, $R^\nabla(X,Y)=\nabla_Y\nabla_X-\nabla_X\nabla_Y+\nabla_{[X,Y]}$ is the~curvature tensor
of the Levi-Civita connection $\nabla:TM\times C^\infty(TM)\to C^\infty(TM)$.
 The central object in the paper is the total mixed scalar curvature,
referred to as the \textit{mixed Einstein-Hilbert action} and given by
\begin{equation}\label{E-Jmix}
 J_{\,\rm mix}(g) = \int_{M} \Sm_{\,\rm mix}(g)\, {\rm d} \vol_g.
\end{equation}
If the integral \eqref{E-Jmix} does not converge, one may use the modified definition,
where the region of integration is a ``large" relatively compact domain $\Omega\subset M$.
 Note that $\Sm_{\,\rm mix}$ is the Gaussian curvature for surfaces foliated by curves,
 and $\Sm_{\,\rm mix} = \Ric(N,N)$ for a~codimensi\-on-one foliation with a unit normal $N$.
 In the last case, action \eqref{E-Jmix} is given by (see also Sect.~\ref{subsec:codim1fol})
\begin{equation}\label{E-Jmix-N}
 J_{\,\rm mix}(g) = \int_{M} \Ric_{\,g}(N,N)\,{\rm d}\vol_g\,,
\end{equation}
where $\Ric_{\,g}(X,Y)=\tr g(R^\nabla(X,\,\cdot\,)Y,\,\cdot\,)$ is the Ricci tensor of $(M,g)$.

Our considerations through the present paper are confined to the Riemannian case ($g \in {\rm Riem} (M)$) and we relegate the arbitrary (and Lorentzian) signature case to further work.

Let $\mathfrak{X}_M$ be the module over $C^\infty(M)$ of all vector fields on $M$ (i.e., sections of the tangent bundle $TM$), and $\mathfrak{X}_{\mathcal D}$ (resp. $\mathfrak{X}_{\widetilde{\mathcal D}}$) the module over $C^\infty(M)$ of all vector fields in ${\mathcal D}$ (resp. $\widetilde{\mathcal D}$).
For every $X\in{\mathfrak X}_M$, let $\widetilde{X} \equiv X^\top$ be the component of $X$ along $\widetilde{\mathcal D}$ (resp. $X^\perp$ the component of $X$ along ${\mathcal D}$) with respect to the direct sum decomposition (\ref{e:Decomposition}).
A~tensor field $S \in {\rm Sym}^2(M)$ is said to be {\em adapted} if $S(\tilde X, Y^\perp) = 0$ for any $X,Y\in{\mathfrak X}_M$. Let ${\mathfrak M}\equiv{\mathfrak M}(\widetilde{\mathcal D},\,{\mathcal D})$ consist of all adapted symmetric tensor fields on $(M,\,\widetilde{\mathcal D},\,{\mathcal D})$.
The domain of $J_{\,\rm mix}$ is {\em a priori} the space
${\rm Riem}(M,\,\widetilde{\mathcal D},\,{\mathcal D})\equiv {\rm Riem}(M) \cap {\mathfrak M}$
of all adapted metrics, i.e.,
$J_{\,\rm mix} : {\rm Riem} (M,\,\widetilde{\mathcal D},\,{\mathcal D})\to{\mathbb R}$.

We say that a tensor $S \in {\rm Sym}^2(M)$ is $\widetilde{\mathcal D}$-\emph{truncated}
if $X^\perp\,\rfloor\,S=0$ (resp. ${\mathcal D}$-\emph{truncated} if $\tilde{X}\,\rfloor\,S=0$) for any $X\in{\mathfrak X}_M$. This notion is extended for $(1,1)$-tensors.
Let ${\mathfrak M}_{\widetilde{\mathcal D}}$ and ${\mathfrak M}_{\mathcal D}$ be, respectively, the spaces of $\widetilde{\mathcal D}$-truncated and $\mathcal D$-truncated symmetric $(0,2)$-tensor fields. Then ${\mathfrak M}_{\mathcal D}$
and ${\mathfrak M}_{\widetilde{\mathcal D}}$ are subspaces of $\mathfrak M$ and
\begin{equation}\label{e:Decomposition2}
 {\mathfrak M} = {\mathfrak M}_{\widetilde{\mathcal D}} \oplus {\mathfrak M}_{\mathcal D}\,,
\end{equation}
the decomposition is orthogonal with respect to the inner product $g^\ast$ induced on $\mathfrak M$
by each $g \in {\rm Riem}(M,\,\widetilde{\mathcal D},\, {\mathcal D})$.
 A~tensor $S \in {\mathfrak M}_{\mathcal D}$ is
${\mathcal D}$-\emph{conformal} if $S=s\,g^\perp$ for some $s \in C^\infty (M, \RR )$,
in particular, ${\mathcal D}$-\emph{scaling} if $s$ is constant.
Given $g \in {\rm Riem}(M,\,\widetilde{\mathcal D},\, {\mathcal D})$, the subspace
of ${\mathfrak M}$, consisting of biconformal adapted tensors, splits into direct sum of
${\mathcal D}$- and $\widetilde{\mathcal D}$-conformal components, i.e.,
${\mathfrak B} = {\mathfrak B}_{\widetilde{\mathcal D}} \oplus {\mathfrak B}_{\mathcal D}$.
Certainly, the space ${\mathfrak B}$ is wider than the space of all conformal to $g$ metrics.

For each $(0,2)$-tensor field $S$ tangent to $M$
we define its truncated components $\widetilde{S}, \, S^\bot \in \Gamma(T^\ast M \otimes T^\ast M)$ by setting
$\widetilde S(X,Y) = S(\tilde X,\tilde Y)$ and $S^\perp(X,Y) = S(X^\perp, Y^\perp)$ for any $X, Y\in{\mathfrak X}_M$.
If $S \in {\rm Sym}^2(M)$ then $S\in{\mathfrak M} \Longleftrightarrow S = S^\bot + \widetilde{S}$,
see \eqref{e:Decomposition2}.
 In particular, for any adapted metric $g\in{\rm Riem}(M,\,\widetilde{\mathcal D},\,{\mathcal D})
\equiv {\rm Riem}(M) \cap {\mathfrak M}$ one has a decomposition
\[
 g = g^\perp + \tilde{g}\quad \Longleftrightarrow \quad
 g=\bigg(\begin{array}{cc}
   g^\perp_{\,|\,{\mathcal D}} & 0 \\
   0 & \tilde{g}_{\,|\,\widetilde{\mathcal D}}
 \end{array}\bigg) .
\]
Our purpose in this paper is to compute the directional derivatives
\begin{equation}\label{E-DJ}
 D_g J_{\,\rm mix} : T_g{\rm Riem}(M,\,\widetilde{\mathcal D},\,{\mathcal D}) \equiv {\mathfrak M}\ \to\ {\mathbb R}
\end{equation}
for any $g \in {\rm Riem}(M,\,\widetilde{\mathcal D},\,{\mathcal D})$ on almost-product
or foliated manifolds $(M,\,\mathcal D,\,\widetilde{\mathcal D})$
and study the extrinsic geometry of $\widetilde{\mathcal D}$ and ${\mathcal D}$ in $(M, g)$,
where $g$ is a critical point of $J_{\,\rm mix}$ with respect to adapted variations \eqref{E-Sdtg}.
Certainly, we can restrict ourselves to the cases
$D_g J_{\,\rm mix}:{\mathfrak M}_{\mathcal D}\to{\mathbb R}$
or $D_g J_{\,\rm mix}:{\mathfrak M}_{\widetilde{\mathcal D}}\to{\mathbb R}$,
when $g$ is either a $\mathcal D$-critical point
(i.e., $D_g J_{\,\rm mix}(S)=0$ for every $S\in{\mathfrak M}_{\mathcal D}$) or a $\widetilde{\mathcal D}$-critical
point (i.e., $D_g J_{\,\rm mix}(S)=0$ for every $S\in{\mathfrak M}_{\widetilde{\mathcal D}}$) of $J_{\,\rm mix}$.

The so called musical isomorphisms $\sharp$ and $\flat$ will be used for arbitrary $(k,l)$-tensor fields,
which form the infinite-dimensional vector spaces $T^k_l M$ over $\RR$ and modules over $C^\infty(M)$.
For~example, if $\omega \in T^1_0 M$ is 1-form and $X \in {\mathfrak X}_M$ then
$\omega(Y)=g(\omega^\sharp,Y)$ and $X^\flat(Y) =g(X,Y)$ for any $Y\in {\mathfrak X}_M$,
and if $S\in{\rm Sym}^2(M)$ then the $(1,1)$-tensor field $S^\sharp \in T^1_1 M$ is
$g(S^\sharp X, Y) = S(X,Y)$ for any $X,Y\in {\mathfrak X}_M$.
Note that if $S\in{\mathfrak M}$ then $\widetilde{\mathcal D}$ and ${\mathcal D}$ are $S^\sharp$-invariant.

\subsection{The fundamental tensors of almost-product manifolds}\label{subsec:PRtensor}

As we shall see shortly, the Euler-Lagrange equations of the variational principle associated to \eqref{E-DJ}
involve a new kind of Ricci curvature (previously introduced in \cite{r2010}, and studied in \cite{bdrs} for global hyperbolic spacetimes $(M^4,g)$), whose properties need to be further investigated.
The ${\mathcal D}$-truncated symmetric $(0,2)$-tensor field ${r}_{\,\mathcal D}$ on $(M,g)$ given by
\begin{equation}\label{E-Rictop2}
 {r}_{\,{\mathcal D}}(X,Y) = \sum\nolimits_{a} g(R^\nabla (E_a, \, X^\perp)E_a, \, Y^\perp), \qquad  X,Y\in {\mathfrak X}_M,
\end{equation}
is referred to as the \textit{partial Ricci tensor} concentrated on $\mathcal D$.
A $2$-dimensional subspace $\sigma \subset T_x M$ is {\em mixed} if it admits a linear basis $\{v, w\}\subset\sigma$ such that $v\in\widetilde{\mathcal D}_x$ and $w \in {\mathcal D}_x$.
Therefore, the \textit{partial Ricci} curvature in the direction of
a unit vector $X\in{\mathcal D}$ is the mean value of sectional curvatures over all mixed planes containing $X$.
Similarly, the partial Ricci tensor on $\widetilde{\mathcal D}$ is
\begin{equation}\label{E-Rictop}
 {r}_{\,\widetilde{\mathcal D}}(X,Y)
 =\sum\nolimits_{i} g(R^\nabla ({\mathcal E}_{i},\,\tilde X){\mathcal E}_{i},\, \tilde Y),
 \qquad X,Y\in {\mathfrak X}_M.
\end{equation}
In particular (by \eqref{eq-wal2}),
 $\tr_{g}{r}_{\,\widetilde{\mathcal D}}=\Sm_{\,\rm mix}=\tr_{g}{r}_{\,\mathcal D}$.
To study the partial Ricci curvature (e.g., in Proposition~\ref{L-CC-riccati} below) we introduce several tensors.
 Let~$T, h:\widetilde{\mathcal D}\times \widetilde{\mathcal D}\to{\mathcal D}$ and
$\tilde T, \tilde h:{\mathcal D}\times{\mathcal D}\to\widetilde{\mathcal D}$ be
the integrability tensors and the second fundamental forms
of $\widetilde{\mathcal D}$ and~${\mathcal D}$, respectively,
\begin{eqnarray*}
  T(X,Y)=(1/2)\,[X,\,Y]^\perp,\quad h(X,Y) \eq (1/2)\,(\nabla_X Y+\nabla_Y X)^\perp,\\
 \tilde T(X,Y)=(1/2)\,[X,\,Y]^\top,\quad \tilde h(X,Y) \eq (1/2)\,(\nabla_X Y+\nabla_Y X)^\top.
\end{eqnarray*}
The mean curvature vector fields of $\widetilde{\mathcal D}$ and ${\mathcal D}$
are $H=\tr_{g} h$ and $\tilde H=\tr_{g}\tilde h$, respectively.
 The~distribution $\widetilde{\mathcal D}$ (and similarly for ${\mathcal D}$)
is called \textit{totally umbilical}, \textit{harmonic}, or \textit{totally geodesic}, if
 $h=\frac1nH\,\tilde g,\ H =0$, or $h=0$, respectively.

The \textit{conullity tensors}
$\tilde C: {\mathfrak X}_{\widetilde{\mathcal D}}\times {\mathfrak X}_{\mathcal D}\to{\mathfrak X}_{\mathcal D}$
and $C: {\mathfrak X}_{\mathcal D}\times {\mathfrak X}_{\widetilde{\mathcal D}}\to {\mathfrak X}_{\widetilde{\mathcal D}}$
are defined~by
\begin{equation}\label{E-conulC}
 \tilde C_Y(X)=-(\nabla_{X}\,Y)^\perp,\qquad
 C_Z(W)=- \left( \nabla_{W}\,Z \right)^\top ,
\end{equation}
for any $ Y,W\in{\mathfrak X}_{\widetilde{\mathcal D}}$ and $X,Z\in{\mathfrak X}_{\mathcal D}$.
Let $\tilde A_Y$ be the Weingarten operator of ${\mathcal D}$ with respect to $Y$ (i.e., dual to $\tilde h$),
and similarly $A_Z$. The dual to $T$ and $\tilde T$ operators $T^\sharp_Z$ and $\tilde T^\sharp_Y$ are given by $g(T^\sharp_Z(X),Y)=g(T(X,Y),Z)$ and $g(\tilde T^\sharp_Y(X),Z)=g(\tilde T(X,Z),Y)$. Then
\begin{equation}\label{E-CA2}
\begin{cases} \tilde C_Y(X) = \tilde A_Y(X)+\tilde T^\sharp_Y(X), \;\; X\in{\mathfrak X}_{\mathcal D},
 \cr
 C_Z(W) = A_Z(W)+T^\sharp_Z(W), \;\; W\in{\mathfrak X}_{\widetilde{\mathcal D}},
 . \cr \end{cases}
\end{equation}
 The \textit{divergence of a vector field} $\xi\in{\mathfrak X}_M$ is given by
\[
 \Div\xi=\tr (\nabla \xi) =\sum\nolimits_{a} g(\nabla_{a} \,\xi, E_a)
 +\sum\nolimits_{i} g(\nabla_{i} \,\xi, {\mathcal E}_i).
\]
The Divergence Theorem is  $\int_{M} (\Div\xi)\,d\vol_g =0$, when $M$ is closed;
this is also if $M$ is open and $\xi$ is supported in a relatively compact domain $\Omega\subset M$.
The $\widetilde{\mathcal D}$-\textit{divergence} of $\xi$ is defined by
 $\widetilde{\Div}\,\xi=\sum\nolimits_{a} g(\nabla_{a}\xi, E_a)\,$.
Respectively, $\Delta f=\Div(\nabla f)$ is the Laplacian
and $\widetilde\Delta\,f=\widetilde{\Div}\,(\widetilde\nabla\,f)$
the $\widetilde{\mathcal D}$-\textit{Laplacian} on functions.
For $X\in{\mathfrak X}_{\widetilde{\mathcal D}}$ and a function $f\in C^2(M)$, we have, see \cite{rovwol},
\begin{eqnarray}\label{E-divN}
 \widetilde{\Div}\,X = \Div X+g(\tilde H, X),\qquad
 \widetilde\Delta\,f =\Div(\,\widetilde\nabla\,f +f\tilde H)-(\,\Div\tilde H)f.
\end{eqnarray}
Indeed, using $\tilde H=\sum\nolimits_{i\le p} \tilde h({\mathcal E}_{i}, {\mathcal E}_{i})$
and $g(X,\,{\mathcal E}_{i})=0$, one derives~(\ref{E-divN})$_1$
\begin{equation*}
 \Div X-\widetilde{\Div}\,X =\sum\nolimits_{i} g(\nabla_{{\mathcal E}_{i}} X,\,{\mathcal E}_{i})
 =-\sum\nolimits_{i} g(h({\mathcal E}_{i}, {\mathcal E}_{i}), X) = -g(\tilde H, X).
\end{equation*}
The identity $\Div(\phi\,\xi)=\phi\Div\xi + \xi(\phi)$
together with (\ref{E-divN})$_1$ for $X=\widetilde\nabla f$ imply (\ref{E-divN})$_2$.

 Define $(1,1)$-tensors, called the \textit{Casorati operators} of ${\cal D}$ (and similarly for $\widetilde{\cal D}$):
 \[
 {\mathcal A}:=\sum\nolimits_{\,a} A_{a}^2,\quad
 {\mathcal T}:=\sum\nolimits_{\,a}(T_{a}^\sharp)^2.
\]
For $\widetilde{\mathcal D}$-valued  $(1,2)$-tensors $P$,
define $(\widetilde{\Div}\,P)(X,Y)=\sum\nolimits_a g((\nabla_a\,P)(X,Y), {E}_a)$.
We have
\begin{eqnarray}\label{E-divP}
 \widetilde{\Div}\,P = \Div P+\<\tilde H, P\>\,,
\end{eqnarray}
where $\<P,\,H\>(X,Y):=g(P(X,Y),\,H)$ is a $(0,2)$-tensor.
For example, $\widetilde{\Div}\,\tilde h = \Div\tilde h+\<\tilde H,\tilde h\>$.
Similarly we obtain $\Div^\perp h = \Div h+\<H, h\>$.

 The \textit{deformation tensor} ${\rm Def}_{\mathcal D}\,H$ of $H$ is the symmetric part of $\nabla H$
restricted to~${\mathcal D}$, i.e.,
\[
 2\,{\rm Def}_{\mathcal D}\,H(X,Y)=g(\nabla_X H, Y) +g(\nabla_Y H, X),\quad X,Y\in {\mathfrak X}_{\mathcal D}.
\]
(We call $H$ a ${\mathcal D}$-\textit{Killing vector field} if ${\rm Def}_{\mathcal D}\,H=0$).
It is also useful to identify the antisymmetric part of $\nabla H$ restricted to~${\mathcal D}$,
which is regarded as a 2-form $d_{\mathcal D}\,H$,
\[
 2\,d_{\mathcal D}\,H(X,Y)=g(\nabla_X H, Y)-g(\nabla_Y H, X),\quad X,Y\in{\mathfrak X}_{\mathcal D}.
\]
Using \eqref{E-CA2} and $\tr_{g}(A_Y T^\sharp_X)=0$,
define the ${\mathcal D}$-truncated
$(0,2)$-tensor $\Psi$ by the identity
\begin{equation}\label{E-Psi}
 \Psi(X,Y) =\tr_{g}(C_Y C_X) =\tr_{g}(A_Y A_X+\!T^\sharp_Y T^\sharp_X),
 \quad X,Y\in{\mathfrak X}_{\mathcal D}\,.
\end{equation}
Similarly we define the tensor
\[
 \widetilde\Psi(X,Y)=\tr_{g}(\tilde A_Y \tilde A_X +\tilde T^\sharp_Y \tilde T^\sharp_X),
 \quad X,Y\in{\mathfrak X}_{\widetilde{\mathcal D}}.
\]

\begin{prop}\label{L-CC-riccati}
Let $g\in{\rm Riem}(M,\,\widetilde{\mathcal D},\,{\mathcal D})$. Then
\begin{eqnarray}\label{E-genRicN}
 r_{\,{\mathcal D}} -\Div\tilde h -\<\tilde H,\tilde h\>
  +\widetilde{\mathcal A}^\flat +\widetilde{\mathcal T}^\flat
 +\Psi -{\rm Def}_{\mathcal D}\,H \eq 0\,,\\
\nonumber
  d_{\mathcal D}\,H
  +\widetilde{\Div}\,\tilde T -\sum\nolimits_a
 \big(\tilde A_{a}\tilde T^\sharp_{a} +\tilde T^\sharp_{a}\tilde A_{a}\big)^\flat \eq 0.
\end{eqnarray}
The equations for $r_{\,\widetilde{\mathcal D}}$ are dual to \eqref{E-genRicN}:
\begin{eqnarray}\label{E-genRicN-dual}
 r_{\,{\widetilde{\mathcal D}}} -\Div h -\<H, h\> +{\mathcal A}^\flat +{\mathcal T}^\flat
 +\widetilde\Psi -{\rm Def}_{\widetilde{\mathcal D}}\,\tilde H \eq 0\,, \\
\nonumber
 d_{\widetilde{\mathcal D}}\,\tilde H +\Div^\perp T -\sum\nolimits_{\,i}
 \big(A_{i} T^\sharp_{i} +T^\sharp_{i} A_{i}\big)^\flat \eq 0,
\end{eqnarray}
\end{prop}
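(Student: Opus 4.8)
The plan is to compute $r_{\mathcal D}(X,Y)$ for $X,Y\in{\mathfrak X}_{\mathcal D}$ straight from the definition \eqref{E-Rictop2}, expanding the curvature operator $R^\nabla(E_a,X)=\nabla_X\nabla_{E_a}-\nabla_{E_a}\nabla_X+\nabla_{[E_a,X]}$ and summing over the $g$-orthonormal frame $\{E_a\}$ of $\widetilde{\mathcal D}$. The organizing observation is that, by the pair symmetry $g(R^\nabla(U,V)P,Q)=g(R^\nabla(P,Q)U,V)$ of the curvature, one has $g(R^\nabla(E_a,X)E_a,Y)=g(R^\nabla(E_a,Y)E_a,X)$, so $r_{\mathcal D}$ is a \emph{symmetric} $(0,2)$-tensor. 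The raw expansion will nonetheless produce both symmetric and skew-symmetric (in $X,Y$) contributions; equating $r_{\mathcal D}$ with the symmetric part yields \eqref{E-genRicN}$_1$, while the skew-symmetric part, which cannot appear in the symmetric $r_{\mathcal D}$, must vanish identically and so yields \eqref{E-genRicN}$_2$. Thus the two displayed identities are precisely the symmetric and antisymmetric projections of a single computation.

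To carry out the expansion I would replace every covariant derivative by its $\widetilde{\mathcal D}$- and ${\mathcal D}$-components through the structure tensors, using three substitution rules: for $W\in{\mathfrak X}_{\widetilde{\mathcal D}}$, $(\nabla_{E_a}W)^\perp=h(E_a,W)+T(E_a,W)$ (the splitting of the perp part into its symmetric and skew pieces); for $X\in{\mathfrak X}_{\mathcal D}$, $(\nabla_X E_a)^\perp=-\tilde C_{E_a}(X)$ with $\tilde C_Y=\tilde A_Y+\tilde T^\sharp_Y$ from \eqref{E-CA2}; and for $Z\in{\mathfrak X}_{\mathcal D}$, $(\nabla_{E_a}Z)^\top=-C_Z(E_a)$ with $C_Z=A_Z+T^\sharp_Z$. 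Feeding these into the three summands produces three kinds of terms. First, genuine derivatives of $\tilde h$ and $\tilde T$ along the frame, which assemble into the $\widetilde{\mathcal D}$-divergences $\widetilde{\Div}\,\tilde h$ and $\widetilde{\Div}\,\tilde T$; the former I rewrite as $\Div\tilde h+\<\tilde H,\tilde h\>$ by \eqref{E-divP}, reproducing the first two correction terms of \eqref{E-genRicN}$_1$, while $\widetilde{\Div}\,\tilde T$ (a $2$-form, since $\tilde T$ is skew) is kept as the corresponding term of \eqref{E-genRicN}$_2$. Second, a derivative of the mean curvature $H=\sum_a h(E_a,E_a)$ in the direction $X$, whose symmetric and skew parts are exactly ${\rm Def}_{\mathcal D}\,H$ and $d_{\mathcal D}\,H$. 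Third, purely algebraic quadratic terms in the conullity operators.

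The quadratic terms are where the Casorati and $\Psi$ tensors emerge. Expanding $\sum_a\tilde C_{E_a}\tilde C_{E_a}$ via $\tilde C=\tilde A+\tilde T^\sharp$ gives the Casorati pieces $\widetilde{\mathcal A}^\flat,\widetilde{\mathcal T}^\flat$, which are symmetric because $\tilde A_a$ is self-adjoint and $\tilde T^\sharp_a$ skew-adjoint on ${\mathcal D}$, together with the cross term $\sum_a(\tilde A_a\tilde T^\sharp_a+\tilde T^\sharp_a\tilde A_a)^\flat$, which is a $2$-form for the same adjointness reason; the companion terms, in which the intermediate vector is differentiated back into ${\mathcal D}$, reorganize after summation over $a$ into $\tr_g(C_YC_X)=\Psi(X,Y)$ of \eqref{E-Psi}. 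Collecting the symmetric contributions then gives \eqref{E-genRicN}$_1$ and the skew ones give \eqref{E-genRicN}$_2$. Finally, the dual pair \eqref{E-genRicN-dual} needs no fresh computation: it follows from \eqref{E-genRicN} under the involution interchanging the two distributions, $\widetilde{\mathcal D}\leftrightarrow{\mathcal D}$, $\{E_a\}\leftrightarrow\{{\mathcal E}_i\}$, $h\leftrightarrow\tilde h$, $T\leftrightarrow\tilde T$, $A\leftrightarrow\tilde A$, $C\leftrightarrow\tilde C$, which carries $r_{\mathcal D}$ to $r_{\widetilde{\mathcal D}}$ and each tensor to its tilded analogue.

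I expect the main obstacle to be the bookkeeping around the frame-dependent sum $\sum_a\nabla_{E_a}E_a$: only its ${\mathcal D}$-component is the tensorial object $H$, so I must either evaluate at a point with a frame satisfying $(\nabla_{E_a}E_b)^\top=0$ there, or keep the intrinsic $\widetilde{\mathcal D}$-connection terms explicitly and verify they cancel using the antisymmetry $g(\nabla_X E_a,E_b)=-g(\nabla_X E_b,E_a)$ of the frame coefficients. Correctly separating the genuine Codazzi-type derivatives ($\widetilde{\Div}\,\tilde h$, $\widetilde{\Div}\,\tilde T$, $\nabla H$) from the algebraic quadratic terms, and routing each divergence through \eqref{E-divN}--\eqref{E-divP} with the right projection and sign, is where errors are most likely to creep in; everything else is a routine, if lengthy, tensor manipulation.
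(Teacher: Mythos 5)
Your proposal is correct and is essentially the paper's own argument: the identity you would obtain by expanding $\sum_a g(R^\nabla(E_a,X)E_a,Y)$ through the conullity operators is exactly the paper's \eqref{E-genCRicN}, and the decisive step --- splitting that traced identity into symmetric and antisymmetric parts (legitimate because pair symmetry of $R^\nabla$ makes $r_{\,\mathcal D}$ symmetric), then obtaining \eqref{E-genRicN-dual} by interchanging the roles of the two distributions --- is identical. The only difference is packaging: the paper imports the curvature--conullity formula \eqref{E-genricA} from \cite[Lemma~2.25]{rov-m} and traces it over $\widetilde{\mathcal D}$, whereas you re-derive that content from scratch with the same kind of pointwise frame normalization.
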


\begin{proof}
Let $X,Y\in{\mathfrak X}_{\mathcal D}$ and $U,V\in{\mathfrak X}_{\widetilde{\mathcal D}}$,
then, see \cite[Lemma~2.25]{rov-m},
\begin{equation}\label{E-genricA}
 g(R^\nabla(U,\,X)V,\,Y) =
 g(((\nabla_U\,\tilde C)_V-\tilde C_V\tilde C_U)(X),Y) + g(((\nabla_X\,C)_Y-C_Y\,C_X)(U),V).
\end{equation}
Assume $\nabla_X\,Y\in\widetilde{\mathcal D}_x$ and $\nabla_X\,E_a\in{\mathcal D}_x$ at $x\in M$.
Note~that
\begin{eqnarray*}
 \sum\nolimits_a g((\nabla_X\,C)_Y(E_a),\,E_a) \eq\!\sum\nolimits_a\!\nabla_X (g(C_Y(E_a),\,E_a))\\
 =\nabla_X \big(g\,\big(\sum\nolimits_a h(E_a,E_a),\,Y\big)\big) \eq g(\nabla_X H,\,Y).
\end{eqnarray*}
Thus, tracing \eqref{E-genricA} over $\widetilde{\mathcal D}_x$ yields
\begin{equation}\label{E-genCRicN}
 {r}_{\,{\mathcal D}}(X,Y) = g(\widetilde{\Div}\,\tilde C(X),Y) -g(\sum\nolimits_{a}\tilde C_{a}^2(X),Y)
 +g(\nabla_X H, Y) -\tr_{g}(C_Y C_X)\,,
\end{equation}
where $\widetilde{\Div}\,\tilde C=\sum\nolimits_{a=1}^n (\nabla_a\,\tilde C)_{a}$.
Using \eqref{E-genCRicN}, equalities
$\tr_{g}(A_Y T^\sharp_X)=0=\tr_{g}(T^\sharp_Y A_X)$
and definitions above, we find \eqref{E-genRicN} as the symmetric and antisymmetric parts of \eqref{E-genCRicN}.
\end{proof}

\begin{rem}\rm
Tracing \eqref{E-genRicN}$_1$ over $\widetilde{\mathcal D}$ and applying the equalities
\begin{eqnarray*}
 \tr_{g}\Psi \eq \sum\nolimits_{\,i}\tr_{g}(A_i^2 + (T^\sharp_i)^2) = \|h\|^2 - \|T\|^2,\\
 \tr_{g}\widetilde{\mathcal A}^\flat\eq\|\tilde h\|^2,\quad \tr_{g}\widetilde{\mathcal T}^\flat = -\|\tilde T\|^2,\\
 \tr_{g}\,({\Div}\,\tilde h) \eq \Div\tilde H,\quad
 \tr_{g}\,({\rm Def}_{\mathcal D}\,H) = \Div^\perp H =\Div H +\|H\|^2
\end{eqnarray*}
yield the remarkable formula by P.\,Walczak \cite{wa1},
\begin{equation}\label{eq-ran}
 \Sm_{\,\rm mix} = \|H\|^2-\|h\,\|^2+\|T\|^2+\|\tilde H\|^2-\|\tilde h\,\|^2+\|\tilde T\|^2
 +\Div(H+\tilde H)\,,
\end{equation}
which represents $\Sm_{\,\rm mix}$ in terms of extrinsic geometry of the distributions.
The norms of tensors can be calculated using the adapted orthonormal basis as
\begin{eqnarray*}
 \|\tilde h\,\|^2 \eq\sum\nolimits_{\,i,j}\|\tilde h({\mathcal E}_i,{\mathcal E}_j)\|^2,\quad
 \|\tilde T\|^2=\sum\nolimits_{\,i,j}\|\tilde T({\mathcal E}_i,{\mathcal E}_j)\|^2,\\
 \|h\,\|^2 \eq\sum\nolimits_{\,a,b}\|h({E}_a,{E}_b)\|^2,\quad
 \|T\|^2=\sum\nolimits_{\,a,b} \|T({E}_a,{E}_b)\|^2.
\end{eqnarray*}
\end{rem}

The difference, called the \textit{extrinsic curvature} of $\widetilde{\mathcal D}$
(of the leaves of $\calf$ in integrable~case),
\[
 R^{\,\rm ex}(X,\,Y,\,Z,\,W)=
 g(h(\tilde X,\tilde Z),h(\tilde Y,\tilde W))-g(h(\tilde X,\, \tilde Y),h(\tilde Z,\tilde W))
\]
is useful in study of extrinsic geometry of foliations, see \cite{rov-m,rw-m}.
The extrinsic Ricci tensor $\Ric^{\,\rm ex}$ and the extrinsic scalar curvature function $\Sm_{\,\rm ex}$ of
$\widetilde{\mathcal D}$ (resp., a foliation $\calf$) are
\begin{eqnarray}\label{E-RR0}
\nonumber
 \Ric^{\rm ex}(X,\,Y)\eq
 \sum\nolimits_{a}\!\big(g(h(\tilde X,\tilde Y),h(E_a,E_a))-g(h(\tilde X,E_a),h(\tilde Y,E_a))\big),\\
 \Sm_{\,\rm ex} \eq \tr_{g} \Ric^{\rm ex} =\sum\nolimits_{a} \Ric^{\rm ex}(E_a,\,E_a)\,.
\end{eqnarray}
Tensors $\tilde R^{\,\rm ex}$, ${\widetilde\Ric}\,\!^{\,\rm ex}$ and $\widetilde\Sm_{\,\rm ex}$
are defined similarly for $\mathcal D$.
Using the equalities
\[
 \Sm_{\,\rm ex}=\|H\|^2-\|h\|^2,\quad \widetilde\Sm_{\,\rm ex} =\|\tilde H\|^2-\|\tilde h\|^2,
\]
we rewrite \eqref{eq-ran} as
\begin{equation}\label{eq-ran-ex}
 \Sm_{\,\rm mix} = \Sm_{\,\rm ex} +\widetilde\Sm_{\,\rm ex} +\|T\|^2 +\|\tilde T\|^2 + \Div(H+\tilde H)\,,
\end{equation}
Note that $\|H\|^2\le n\,\|h\,\|^2$ with the equality for totally umbilical $\widetilde{\mathcal D}$.

\subsection{Variation formulas for the extrinsic geometry}
\label{sec:prel}

To apply the methods of variational calculus to $J_{\,\rm mix}$,
given an adapted metric $g$ on $(M,\widetilde{\mathcal D},{\mathcal D})$, we shall consider smooth $1$-parameter variations of $g_0 = g$,
\begin{equation}\label{E-Sdtg}
 \big\{ g_t \in {\rm Riem}(M, \, \widetilde{\mathcal D}, \, {\mathcal D}) : |t| < \eps \big\}\,.
\end{equation}
If the integral \eqref{E-Jmix} does not converge then the induced infinitesimal variations
$S_t \equiv (\partial g_t/\partial t)\in {\mathfrak M}$ are supported in a sufficiently large relatively compact domain $\Omega\subset M$.
 We adopt the notations
\begin{equation}\label{E-Sdtg-2}
 \partial_t \equiv \partial/\partial t,\quad
 S_t \equiv \dt\,g_t,\quad
 S \equiv \{\dt g_t\}_{t=0}.
\end{equation}
By taking into account the decomposition (\ref{e:Decomposition2}), it will suffice to work with special curves
$\{ g_t \}_{|t| < \eps}$ issuing at $g \in {\rm Riem}(M, \, \widetilde{\mathcal D}, \, {\mathcal D})$ termed $\widetilde{\mathcal D}$-\textit{variations} (resp. $\mathcal D$-\textit{variations}),
as the associated infinitesimal variation $S$ lies in ${\mathfrak M}_{\mathcal D}$
(resp. in ${\mathfrak M}_{\widetilde{\mathcal D}}$).

An adapted variation of a metric $g\in{\rm Riem}(M,\,\widetilde{\mathcal D},\,{\mathcal D})$ has the form $\{ g_t^\bot + \tilde{g}_t : |t| < \eps \}$.
The~corresponding ${\mathcal D}$-variation (resp. $\widetilde{\mathcal D}$-variation) of $g$ has the form
\begin{equation}\label{e:Var}
 \big\{ g_t = g^\perp_t +\tilde g : \ |t| < \,\eps \big\},\quad
 ({\rm resp.}\ \big\{ g_t = g^\perp +\tilde g_t : \ |t| < \,\eps \big\}).
\end{equation}
For variations \eqref{E-Sdtg}--\,\eqref{E-Sdtg-2} we have, see for example~\cite{rw-m},
\begin{eqnarray}\label{eq2G}
 2\,g_t(\dt(\nabla^t_X\,Y), Z)\eq(\nabla^t_X\,S)(Y,Z)+(\nabla^t_Y\,S)(X,Z)-(\nabla^t_Z\,S)(X,Y),
\end{eqnarray}
where
$X,Y,Z\in\mathfrak{X}_M$
and the first covariant derivative of a $(0,2)$-tensor $S$ is expressed~as
\begin{eqnarray*}
 \nabla_{Z}\,S(Y,V)\eq Z(S(Y,V)) -S(\nabla_Z Y, V)-S(Y,\nabla_Z V).
\end{eqnarray*}

\begin{lem}\label{prop-Ei-a}
Let a local $(\widetilde{\mathcal D},\,{\mathcal D})$-adapted frame $\{E_a,\,{\mathcal E}_{i}\}$
evolve by \eqref{E-Sdtg}--\,\eqref{E-Sdtg-2} according to
 \begin{equation}\label{E-frameE}
 \dt E_a=-(1/2)\,S^\sharp(E_a),\qquad
 \dt {\mathcal E}_{i}=-(1/2)\,S^\sharp({\mathcal E}_{i}).
\end{equation}
 Then $\{E_a(t),{\mathcal E}_{i}(t)\}$ is a $g_t$-orthonormal frame adapted to
 $(\widetilde{\mathcal D},{\mathcal D})$ for all $\,t$.
\end{lem}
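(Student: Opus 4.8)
The plan is to verify the two assertions of the lemma — that the evolving frame stays $g_t$-orthonormal and that it stays adapted — directly, by differentiating the relevant inner products in $t$ and invoking uniqueness for linear ODEs. The guiding idea is that the prescribed evolution \eqref{E-frameE} is tailored precisely so that the motion of each frame vector cancels the infinitesimal change of the metric. Throughout, $\sharp$ in \eqref{E-frameE} is understood with respect to $g_t$, and I write $S_t = \partial_t g_t \in {\mathfrak M}$, which is symmetric.

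First I would check preservation of all the pairwise inner products. Fix two frame vectors $V,W$, each being one of the $E_a$ or ${\mathcal E}_i$, and compute
\begin{equation*}
 \partial_t\big(g_t(V,W)\big) = S_t(V,W) + g_t(\partial_t V, W) + g_t(V, \partial_t W).
\end{equation*}
By the defining relation $g_t(S_t^\sharp X, Y) = S_t(X,Y)$ and the symmetry of $S_t$, the evolution \eqref{E-frameE} gives $g_t(\partial_t V, W) = -\tfrac12 S_t(V,W)$ and $g_t(V,\partial_t W) = -\tfrac12 S_t(V,W)$. Substituting, the three terms cancel and $\partial_t\big(g_t(V,W)\big)=0$. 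Since $\{E_a,{\mathcal E}_i\}$ is $g_0$-orthonormal at $t=0$, each $g_t(V,W)$ keeps its initial value, so $g_t(E_a,E_b)=\delta_{ab}$, $g_t({\mathcal E}_i,{\mathcal E}_j)=\delta_{ij}$ and $g_t(E_a,{\mathcal E}_i)=0$ for all $t$; hence the frame remains $g_t$-orthonormal.

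It then remains to show the frame stays adapted, i.e.\ $E_a(t)\in\widetilde{\mathcal D}$ and ${\mathcal E}_i(t)\in{\mathcal D}$, the distributions themselves being fixed along an adapted variation. Here I would invoke the fact noted above that, because $S_t\in{\mathfrak M}$, the operator $S_t^\sharp$ leaves both $\widetilde{\mathcal D}$ and ${\mathcal D}$ invariant. Writing $E_a(t)=E_a(t)^\top+E_a(t)^\perp$ and projecting \eqref{E-frameE} onto ${\mathcal D}$, this invariance yields $\big(S_t^\sharp E_a(t)\big)^\perp = S_t^\sharp\big(E_a(t)^\perp\big)$, so the $\mathcal D$-component obeys the linear ODE
\begin{equation*}
 \partial_t\big(E_a(t)^\perp\big) = -\tfrac12\,S_t^\sharp\big(E_a(t)^\perp\big),\qquad E_a(0)^\perp = 0.
\end{equation*}
By uniqueness of solutions of a linear ODE, applied fibrewise at each point of $M$, one concludes $E_a(t)^\perp\equiv 0$, i.e.\ $E_a(t)\in\widetilde{\mathcal D}$; the argument for ${\mathcal E}_i(t)\in{\mathcal D}$ is identical with the two distributions interchanged.

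The computation is entirely routine; the one point that needs care is the adaptedness claim, which is not a formal consequence of orthonormality but rests on the $S^\sharp$-invariance of the two distributions — exactly the property guaranteed by $S\in{\mathfrak M}$. This is also what dictates the specific shape of \eqref{E-frameE}: using the $g_t$-raised operator $S_t^\sharp$ (rather than some other endomorphism) is precisely what makes metric-preservation and distribution-preservation fall out together, so I expect no genuine obstacle beyond keeping track of the time-dependence of $\sharp$.
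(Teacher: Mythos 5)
Your proof is correct, and its core is the same computation as the paper's: differentiate $g_t(V,W)$, use $g_t(S_t^\sharp X,Y)=S_t(X,Y)$ together with the symmetry of $S_t$, and observe that the three terms cancel. The only difference is one of completeness: the paper's proof consists solely of this cancellation for the pairs $E_a,E_b$ (with ``similarly'' for the ${\mathcal E}_i$) and leaves the adaptedness of the evolved frame implicit, relying on the earlier remark that $S\in{\mathfrak M}$ makes $\widetilde{\mathcal D}$ and ${\mathcal D}$ invariant under $S^\sharp$; your fibrewise ODE-uniqueness argument showing $E_a(t)^\perp\equiv 0$ spells out precisely the step the paper takes for granted, and your observation that adaptedness is not a formal consequence of orthonormality (while the cancellation itself needs nothing beyond symmetry of $S_t$) is accurate.
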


\proof
For $\{E_a(t)\}$ (and similarly for $\{{\mathcal E}_{i}(t)\}$) we have
\begin{eqnarray*}
 &&\dt(g_t(E_a, E_b)) = g_t(\dt E_a(t), E_b(t)) +g_t(E_a(t), \dt E_b(t))
 +(\dt g_t)(E_a(t), E_b(t))  \\
 &&= S(E_a(t), E_b(t))-\frac12\,g_t(S^\sharp(E_a(t)), E_b(t)) -\frac12\,g_t(E_a(t), S^\sharp(E_b(t)))=0.
 \qed
\end{eqnarray*}

\begin{lem}[Cf. \cite{rovwol}]\label{L-tildeHh}
For ${\mathcal D}$-variations \eqref{E-Sdtg}--\,\eqref{E-Sdtg-2} we have $\dt T = 0$, $\dt\tilde T = 0$ and
\begin{eqnarray}\label{eq-hatbH-1}
 2\,\dt\tilde h(X,Y) \eq (\tilde h-\tilde T)(S^\sharp(X),Y)  +(\tilde h+\tilde T)(X,S^\sharp(Y)) -\!\widetilde\nabla S(X,Y),\\
\label{eq-hatH}
 2\,\dt\tilde H \eq -\!\widetilde\nabla(\tr S^\sharp),\quad
 \dt h = -S^\sharp\circ h,\quad \dt H = -S^\sharp(H).
\end{eqnarray}
For ${\mathcal D}$-conformal variations, i.e., $\,S=s\,g^\perp$ where $s:M\to\RR$ is a smooth function, we~have
\begin{eqnarray}\label{eq-hatbH-1b}
 \dt\tilde h \eq s\,\tilde h -(1/2)\,(\widetilde\nabla s)\,g^\perp,\quad
 \dt\tilde H = -(p/2)\,\widetilde\nabla\,s,\\
\label{eq-bH-conf}
 \dt h \eq -s h,\qquad \dt H = -s H.
\end{eqnarray}
The formulas for $\widetilde{\mathcal D}$-variations
and $\widetilde{\mathcal D}$-conformal variations
are dual to \eqref{eq-hatbH-1}\,--\,\eqref{eq-hatH}
and \eqref{eq-hatbH-1b}\,--\,\eqref{eq-bH-conf}.
Hence, ${\mathcal D}$-variations preserve total umbilicity, total geodesy and harmonicity of
$\,\widetilde{\mathcal D}$.
Moreover, ${\mathcal D}$-confor\-mal variations preserve total umbilicity of $\,{\mathcal D}$.
\end{lem}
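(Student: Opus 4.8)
The plan is to read off every evolution equation from the single master formula \eqref{eq2G} for $\dt(\nabla^t_X Y)$, exploiting the defining feature of a $\mathcal D$-variation: since the $\widetilde{\mathcal D}$-block $\tilde g$ is frozen in $g_t = g^\perp_t + \tilde g$ (see \eqref{e:Var}), the infinitesimal variation $S = \dt g_t$ vanishes whenever one of its arguments lies in $\widetilde{\mathcal D}$. Equivalently $S^\sharp$ annihilates $\widetilde{\mathcal D}$ and maps $\mathcal D$ into itself; in particular $S^\sharp(E_a) = 0$, so the frame of Lemma~\ref{prop-Ei-a} satisfies $\dt E_a = 0$ while $\dt\mathcal E_i = -\tfrac12 S^\sharp(\mathcal E_i)$. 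I would open with the two trivial claims $\dt T = \dt\tilde T = 0$: the Lie bracket is metric-independent, and along an adapted variation the projections $(\cdot)^\top,(\cdot)^\perp$ are the fixed algebraic projections attached to $TM = \widetilde{\mathcal D}\oplus\mathcal D$, so $T(X,Y) = \tfrac12[X,Y]^\perp$ and $\tilde T(X,Y) = \tfrac12[X,Y]^\top$ do not depend on $t$.

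The technical heart is the formula \eqref{eq-hatbH-1} for $\dt\tilde h$, and this is the step I expect to cost the most. Writing $\tilde h(X,Y) = \tfrac12(\nabla_X Y + \nabla_Y X)^\top$ with $X,Y\in\mathfrak X_{\mathcal D}$ and pairing $\dt\tilde h(X,Y)$ with an arbitrary $E_a$, \eqref{eq2G} gives $2g(\dt\tilde h(X,Y),E_a) = (\nabla_X S)(Y,E_a) + (\nabla_Y S)(X,E_a) - (\nabla_{E_a}S)(X,Y)$. Because $S$ kills $\widetilde{\mathcal D}$, the first two covariant derivatives collapse to $(\nabla_X S)(Y,E_a) = S(Y,\tilde C_{E_a}X)$ and $(\nabla_Y S)(X,E_a) = S(X,\tilde C_{E_a}Y)$, where the conullity tensor enters through $(\nabla_X E_a)^\perp = -\tilde C_{E_a}(X)$ from \eqref{E-conulC}; the last term is $g(\widetilde\nabla S(X,Y),E_a)$ by definition of $\widetilde\nabla S$. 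The delicate bookkeeping is then to convert these back into $\tilde h$ and $\tilde T$: substituting the splitting $\tilde C_{E_a} = \tilde A_{E_a} + \tilde T^\sharp_{E_a}$ from \eqref{E-CA2} and using that $\tilde A$ is $g$-dual to $\tilde h$ and $\tilde T^\sharp$ to $\tilde T$, together with the symmetry of $\tilde h$ and antisymmetry of $\tilde T$, turns $S(X,\tilde C_{E_a}Y)$ into $g((\tilde h-\tilde T)(S^\sharp X,Y),E_a)$ and $S(Y,\tilde C_{E_a}X)$ into $g((\tilde h+\tilde T)(X,S^\sharp Y),E_a)$; since $E_a$ is arbitrary and $g|_{\widetilde{\mathcal D}}$ is nondegenerate, \eqref{eq-hatbH-1} follows.

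Next I would trace to obtain $\dt\tilde H$ in \eqref{eq-hatH}. Differentiating $\tilde H = \sum_i\tilde h(\mathcal E_i,\mathcal E_i)$ and inserting $\dt\mathcal E_i = -\tfrac12 S^\sharp\mathcal E_i$ produces $\dt\tilde H = \sum_i\dt\tilde h(\mathcal E_i,\mathcal E_i) - \sum_i\tilde h(S^\sharp\mathcal E_i,\mathcal E_i)$; feeding in \eqref{eq-hatbH-1} with $X=Y=\mathcal E_i$, the $\tilde h$-terms cancel, the sum $\sum_i\tilde T(S^\sharp\mathcal E_i,\mathcal E_i)$ vanishes because it pairs the symmetric $S$ against the antisymmetric $\tilde T$, and one is left with $-\tfrac12\sum_i\widetilde\nabla S(\mathcal E_i,\mathcal E_i)$. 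Recognising $\sum_i(\nabla_{E_a}S)(\mathcal E_i,\mathcal E_i) = E_a(\tr S^\sharp)$ (the $\widetilde{\mathcal D}$-trace of $\nabla_{E_a}S$ being zero since $S$ annihilates $\widetilde{\mathcal D}$, and $\nabla$ commuting with $\tr_g$) identifies this with $-\tfrac12\widetilde\nabla(\tr S^\sharp)$. The companion formulas $\dt h = -S^\sharp\circ h$ and $\dt H = -S^\sharp(H)$ are the easier, dual computation: pairing $\dt h(X,Y)$ ($X,Y\in\mathfrak X_{\widetilde{\mathcal D}}$) with $Z\in\mathfrak X_{\mathcal D}$ in \eqref{eq2G}, the term $(\nabla_Z S)(X,Y)$ drops out and the remaining two combine, via $(\nabla_X Y)^\perp = h(X,Y)+T(X,Y)$ and the cancellation of the antisymmetric $T$-parts, to $-2S(h(X,Y),Z)$; for $H$ one again uses $\dt E_a = 0$.

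Finally, the conformal specialisations and the preservation statements are corollaries. For $S = s\,g^\perp$ one has $S^\sharp = s\,(\cdot)^\perp$ and $\tr S^\sharp = ps$, and the key simplification is that $(\nabla_{E_a}g^\perp)(X,Y) = 0$ for $X,Y\in\mathfrak X_{\mathcal D}$ (the $\top$-parts it would produce are $g$-orthogonal to $\mathcal D$), so $\widetilde\nabla(s\,g^\perp)(X,Y) = (\widetilde\nabla s)\,g^\perp(X,Y)$; substituting into \eqref{eq-hatbH-1}--\eqref{eq-hatH} yields \eqref{eq-hatbH-1b}--\eqref{eq-bH-conf}. Preservation of total geodesy and harmonicity of $\widetilde{\mathcal D}$ is immediate from $\dt h = -S^\sharp h$ and $\dt H = -S^\sharp H$; for umbilicity $h = \tfrac1n H\tilde g$ I would note $\tilde g$ is frozen and check $\dt(h - \tfrac1n H\tilde g) = 0$ directly, and for umbilicity of $\mathcal D$ under a $\mathcal D$-conformal variation the one subtlety is that $g^\perp$ now moves ($\dt g^\perp = s\,g^\perp$), so the relation $\tilde h = \tfrac1p\tilde H g^\perp$ must be differentiated with this extra term before the cancellation appears. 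The dual formulas for $\widetilde{\mathcal D}$-variations follow by interchanging the roles of the two distributions throughout.
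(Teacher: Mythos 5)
Your proposal is correct and follows essentially the same route as the paper's proof: the master variation formula \eqref{eq2G}, conversion of the $\nabla S$-terms via the conullity operators $\tilde C_{E_a}=\tilde A_{E_a}+\tilde T^\sharp_{E_a}$ from \eqref{E-CA2}, tracing \eqref{eq-hatbH-1} (with the symmetric--antisymmetric cancellation) to get $\partial_t\tilde H$, and substitution of $S=s\,g^\perp$ for the conformal case. One small correction to your last step: $\partial_t\big(h-\tfrac1n H\tilde g\big)$ is not identically zero but equals $-S^\sharp\circ\big(h-\tfrac1n H\tilde g\big)$ (and in the conformal case $\partial_t\big(\tilde h-\tfrac1p\tilde H g^\perp\big)=s\,\big(\tilde h-\tfrac1p\tilde H g^\perp\big)$ after your cancellation), so the preservation claims follow, as the paper notes, from uniqueness of solutions of these linear ODEs rather than from the derivative vanishing outright.
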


\begin{proof}
Recall that $\tilde C_N=\tilde A_N+\tilde T^\sharp_N$ is the conullity operator of ${\mathcal D}$ relative to the
unit vector $N\in\widetilde{\mathcal D}$.
Using (\ref{eq2G}), the symmetry of $S$, and the property $S(\cdot\,, \widetilde{\mathcal D})=0$, we obtain
\begin{eqnarray*}
 2\,g_t(\dt\tilde h(X,Y), N) \eq g_t(\dt(\nabla^t_X Y + \nabla^t_Y X), \,N)\\
 \eq (\nabla^t_X S)(Y,N)+(\nabla^t_Y S)(X,N) -(\nabla^t_{N} S)(X,Y)\\
 \eq S(\tilde C_N(X),Y)+S(\tilde C_N(Y),X)-(\nabla^t_{N} S)(X,Y)
\end{eqnarray*}
for all $X,Y\in{\mathcal D}$ and a unit vector $N\in\widetilde{\mathcal D}$. The above and
\[
 S(\tilde C_N(X),Y) = g((\tilde A_N+\tilde T^\sharp_N)(X), S^\sharp(Y))=g((\tilde h+\tilde T)(X, S^\sharp(Y)),\,N)
\]
yield \eqref{eq-hatbH-1}.
Next, tracing \eqref{eq-hatbH-1} and using skew-symmetry of $\tilde T$, we deduce \eqref{eq-hatH}$_1$,
\begin{eqnarray*}
 && 2\,g(\dt\tilde H,X) = 2\sum\nolimits_i g(\dt(\tilde h({\mathcal E}_i,{\mathcal E}_i)),X) \\
 && = 2\sum\nolimits_i g(\dt\tilde h({\mathcal E}_i,{\mathcal E}_i)
 +2\,\tilde h(\dt{\mathcal E}_i,{\mathcal E}_i),\,X)
 = -\sum\nolimits_i (\nabla_X\,S)({\mathcal E}_i,{\mathcal E}_i) =-X(\tr S^\sharp).
\end{eqnarray*}
By \eqref{eq2G}, for any $X\in {\mathcal D}$ and $E_a, \, E_b\in {\mathfrak X}_{\widetilde{\mathcal D}}$,
\begin{eqnarray*}
 2\,g_t(\dt h(E_a, E_b), X) \eq 2\,g_t(\dt(\nabla^t_{a} E_b +\nabla^t_{b} E_a),\ X)\\
 \eq(\nabla^t_{a}\,S)(X, E_b)+(\nabla^t_{b}\,S)(X, E_a) -(\nabla^t_X S)(E_a, E_b) \\
 \eq -S(\nabla^t_{a} E_b, X) -S(\nabla^t_{b} E_a, X) =-2\,S(h(E_a, E_b), X).
\end{eqnarray*}
This proves \eqref{eq-hatH}$_2$.
 Next (by $S(E_a,\,\cdot)=0$ and the equality $H=\sum_a h(E_a ,\,E_a)$ for a~local orthonormal frame $(E_a)$ of $\widetilde{\mathcal D}$) we may derive \eqref{eq-hatH}$_3$
\begin{eqnarray*}
 g_t (\dt H, X) \eq \sum\nolimits_a g_t(\dt(\nabla_{a} E_a), X)
  =(\nabla_{a}\,S)(E_a, X) -(1/2)\,(\nabla_X\,S)(E_a , E_a)\\
  \eq-\sum\nolimits_a S(\nabla_{a} E_a, X) = -S( H, X) =-g(S^\sharp(H),\,X).
\end{eqnarray*}
For ${\mathcal D}$-conformal variations we substitute $S=s\,g^\perp$ into \eqref{eq-hatbH-1} and obtain \eqref{eq-hatbH-1b}, also \eqref{eq-bH-conf} follows directly from \eqref{eq-hatH}.
Recall that total umbilicity of $\widetilde{\mathcal D}$ means $h =\frac1p\,H\,g_{|\,\widetilde{\mathcal D}}$.
By \eqref{eq-hatH} we have
\[
 \dt(h -(1/p)H\,g_{|\,\widetilde{\mathcal D}}) = -S^\sharp\circ (h -(1/p)H\,g_{|\,\widetilde{\mathcal D}}).
\]
Thus, the last claim in the lemma is a consequence of results about solutions to ODEs.
\end{proof}

Define ${\mathcal D}$-truncated symmetric $(0,2)$-tensor $\Phi_h$ using the identity (with arbitrary~$S$)
\begin{equation}\label{E-Phi}
 \<\Phi_h,\ S \>= S(H,\,H) -\sum\nolimits_{\,a,\,b} S(h(E_a,E_b), h(E_a,E_b))
\end{equation}
that vanishes when $n=1$.
We have
 $\tr_{g}\Phi_h=\Sm_{\,\rm ex}$ and $\tr_{g}\Phi_T=-\|T\|^2$.
Using the skew-symmetry of $T$, define the ${\mathcal D}$-truncated symmetric $(0,2)$-tensor $\Phi_T$ by
\begin{equation}\label{E-PhiT}
 \<\Phi_T,\ S \>=-\sum\nolimits_{\,a,\,b} S(T(E_a,E_b), T(E_a,E_b)).
\end{equation}
Similarly, define $\widetilde{\mathcal D}$-truncated tensors $\Phi_{\tilde h}$ and $\Phi_{\tilde T}$, and
get
 $\tr_{g}\Phi_{\tilde h}=\widetilde{\Sm}_{\,\rm ex}$ and
 $\tr_{g}\Phi_{\tilde T}=-\|\tilde T\|^2$.

Define a self-adjoint $(1,1)$-tensor with zero trace (and similarly its dual tensor ${\cal K}$)
\[
 \tilde{\cal K} =\sum\nolimits_{\,a}\,[\tilde T^\sharp_a , \tilde A_a] 
 =\sum\nolimits_{\,a}\,(\tilde T^\sharp_a \tilde A_a - \tilde A_a \tilde T^\sharp_a).
\]
Observe that if ${\cal D}$ is integrable then $\tilde T^\sharp_a = 0$ for all $a \in \{1, \ldots, n\}$, hence $\tilde{\cal K} =0$. Also, if ${\cal D}$ is totally umbilical, then every operator $\tilde A_a$ is a multiple of identity and $\tilde{\cal K}$ vanishes as well.

\begin{lem}\label{L-H2h2-D}
For ${\mathcal D}$-variations \eqref{E-Sdtg}--\,\eqref{E-Sdtg-2} we have
\begin{eqnarray}\label{E-h2T2-D1}
\nonumber
 &&\dt\,\|\,\tilde h\,\|^2 = \<\Div\tilde h -\tilde{\cal K}^\flat,\, S\> -\Div(\<\tilde h,\,S\>),\\
\nonumber
 &&\dt\,\|\tilde H\,\|^2 = \<(\Div\tilde H)\,g,\, S\> -\Div((\tr_{g} S)\tilde H),\\
\nonumber
 &&\dt\,(\|\,h\,\|^2-\|H\,\|^2) = \<\Phi_h,\ S\>,\\
 &&\dt\,\|\,\tilde T\,\|^2 = \<2\,\widetilde{\mathcal T}^\flat,\ S\>, \;\;
   \dt\,\|\,T\,\|^2 =-\<\Phi_T,\ S\>\,.
\end{eqnarray}
For ${\mathcal D}$-conformal variations \eqref{E-Sdtg}--\,\eqref{E-Sdtg-2}, we have
\begin{eqnarray}\label{E-h2T2-conf1}
\nonumber
 \dt\,\|\,\tilde h\,\|^2 \eq s\Div\tilde H -\Div(s\tilde H), \\
\nonumber
 \dt\,\|\tilde H\,\|^2 \eq p\,\big(s\Div\tilde H -\Div(s\tilde H)\big), \\
\nonumber
 \dt\,\|\,h\,\|^2 \eq -s\,\|h\,\|^2, \quad
 \dt\,\|H\,\|^2 = -s\,\|H\,\|^2,\\
 \dt\,\|\,\tilde T\,\|^2 \eq -2s\,\|\,\tilde T\,\|^2,\quad
 \dt\,\|\,T\,\|^2 = s\,\|T\|^2\,.
\end{eqnarray}
The formulas for $\widetilde{\mathcal D}$-variations are dual to \eqref{E-h2T2-D1} and \eqref{E-h2T2-conf1}.
\end{lem}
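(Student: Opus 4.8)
The plan is to differentiate each squared norm written in the adapted $g$-orthonormal frame $\{E_a,\mathcal{E}_i\}$ of the Remark, and to collect three contributions: from the variation of the metric ($\dt g=S$), from the evolution of the frame (Lemma~\ref{prop-Ei-a}), and from the variation of the tensor itself (Lemma~\ref{L-tildeHh}). Two structural facts organize everything. First, since $S\in{\mathfrak M}_{\mathcal D}$, the operator $S^\sharp$ annihilates $\widetilde{\mathcal D}$ and preserves $\mathcal D$; hence $\dt E_a=-\tfrac12 S^\sharp(E_a)=0$ (the $\widetilde{\mathcal D}$-frame is frozen) while $\dt\mathcal{E}_i=-\tfrac12 S^\sharp(\mathcal{E}_i)\in\mathcal D$. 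Second, for any $\widetilde{\mathcal D}$-valued tensor the metric contribution $S(\,\cdot\,,\,\cdot\,)$ vanishes ($S$ is $\mathcal D$-truncated), whereas for any $\mathcal D$-valued tensor the frame contribution vanishes ($\dt E_a=0$). These remarks immediately reduce four of the five identities to algebra.

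For $\|h\|^2-\|H\|^2$ I substitute $\dt h=-S^\sharp\circ h$ and $\dt H=-S^\sharp(H)$; the metric terms $S(h,h),\,S(H,H)$ combine with the tensor terms $-2S(h,h),\,-2S(H,H)$ to give $-\sum_{a,b}S(h(E_a,E_b),h(E_a,E_b))+S(H,H)=\<\Phi_h,S\>$ by \eqref{E-Phi}. For $\|T\|^2$ only the metric term survives ($\dt T=0$, frozen frame), yielding $\sum_{a,b}S(T(E_a,E_b),T(E_a,E_b))=-\<\Phi_T,S\>$ by \eqref{E-PhiT}. For $\|\tilde T\|^2$ only the frame term survives ($\dt\tilde T=0$, no metric term); passing to the duals $\tilde T^\sharp_a$, and using skew-symmetry of $\tilde T$ together with self-adjointness of $S^\sharp$, it collapses to $2\sum_a\tr_g((\tilde T^\sharp_a)^2 S^\sharp)=2\<\widetilde{\mathcal T}^\flat,S\>$. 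For $\|\tilde H\|^2$ I use $2\dt\tilde H=-\widetilde\nabla(\tr_g S)$ from \eqref{eq-hatH}, so $\dt\|\tilde H\|^2=-\tilde H(\tr_g S)$, and rewrite via $\Div((\tr_g S)\tilde H)=(\tr_g S)\Div\tilde H+\tilde H(\tr_g S)$ to obtain $\<(\Div\tilde H)\,g,S\>-\Div((\tr_g S)\tilde H)$.

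The one genuinely analytic identity is that for $\|\tilde h\|^2$. Substituting the formula \eqref{eq-hatbH-1} for $\dt\tilde h$ together with $\dt\mathcal{E}_i=-\tfrac12 S^\sharp(\mathcal{E}_i)$, the symmetric second-fundamental-form terms cancel in pairs and one is left with $\dt\|\tilde h\|^2={\mathcal I}_T+{\mathcal I}_\nabla$, where ${\mathcal I}_T=\sum_{i,j}g(-\tilde T(S^\sharp\mathcal{E}_i,\mathcal{E}_j)+\tilde T(\mathcal{E}_i,S^\sharp\mathcal{E}_j),\,\tilde h(\mathcal{E}_i,\mathcal{E}_j))$ and ${\mathcal I}_\nabla=-\sum_{i,j}g(\widetilde\nabla S(\mathcal{E}_i,\mathcal{E}_j),\,\tilde h(\mathcal{E}_i,\mathcal{E}_j))$. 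For ${\mathcal I}_T$, relabeling $i\leftrightarrow j$ and using skew-symmetry of $\tilde T$ and symmetry of $\tilde h$ shows the two summands coincide; expanding through $\tilde T^\sharp_a,\tilde A_a$ and symmetrizing against the self-adjoint $S^\sharp$ turns $\sum_a\tilde A_a\tilde T^\sharp_a$ into its symmetric part, which is a multiple of the commutator tensor $\tilde{\mathcal K}=\sum_a[\tilde T^\sharp_a,\tilde A_a]$, giving ${\mathcal I}_T=-\<\tilde{\mathcal K}^\flat,S\>$ (the sign being fixed by the order in the definition of $\tilde{\mathcal K}$). For ${\mathcal I}_\nabla$ I apply the Leibniz rule to the $\widetilde{\mathcal D}$-valued field $\<\tilde h,S\>=\sum_i\tilde h(S^\sharp\mathcal{E}_i,\mathcal{E}_i)$, which yields $\widetilde{\Div}\<\tilde h,S\>=\<\widetilde{\Div}\tilde h,S\>+\sum_{i,j}g(\widetilde\nabla S(\mathcal{E}_i,\mathcal{E}_j),\tilde h(\mathcal{E}_i,\mathcal{E}_j))$, hence ${\mathcal I}_\nabla=\<\widetilde{\Div}\tilde h,S\>-\widetilde{\Div}\<\tilde h,S\>$. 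Converting both $\widetilde{\Div}$'s to $\Div$'s by \eqref{E-divN} and \eqref{E-divP}, the mean-curvature remainders $g(\tilde H,\<\tilde h,S\>)$ and $\<\<\tilde H,\tilde h\>,S\>$ are equal and cancel, leaving $\dt\|\tilde h\|^2=\<\Div\tilde h-\tilde{\mathcal K}^\flat,S\>-\Div(\<\tilde h,S\>)$.

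The conformal identities \eqref{E-h2T2-conf1} follow by inserting $S=s\,g^\perp$: the commutator term drops because $\tr_g\tilde{\mathcal K}=0$, and the relations $\<\tilde h,g^\perp\>=\tilde H$, $\<g^\perp,S\>=\tr_g S$, $\tr_g\Phi_h=\widetilde{\Sm}_{\,\rm ex}$-type identities reduce each line to the stated form; alternatively they come directly from the conformal variation formulas \eqref{eq-hatbH-1b}--\eqref{eq-bH-conf}. The $\widetilde{\mathcal D}$-variation formulas are obtained by interchanging the roles of $\mathcal D$ and $\widetilde{\mathcal D}$ (and the tildes) throughout. I expect the main obstacle to be the $\|\tilde h\|^2$ computation: both the symmetrization producing the commutator $\tilde{\mathcal K}$ and, above all, the integration by parts on ${\mathcal I}_\nabla$, where one must check that the covariant derivatives $\nabla_{E_a}\mathcal{E}_i$ — which do \emph{not} preserve $\mathcal D$ — contribute exactly the terms required for the Leibniz identity to close, so that no stray corrections survive and the clean pairing $\<\Div\tilde h,S\>-\Div(\<\tilde h,S\>)$ emerges.
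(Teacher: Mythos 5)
Your overall route is exactly the paper's: evolve the adapted frame by Lemma~\ref{prop-Ei-a}, substitute the variation formulas of Lemma~\ref{L-tildeHh}, and split each $\dt$ into metric, frame and tensor contributions. Your treatment of $\|\tilde H\|^2$, $\|h\|^2-\|H\|^2$, $\|T\|^2$ and $\|\tilde T\|^2$, your splitting $\dt\|\tilde h\|^2={\mathcal I}_T+{\mathcal I}_\nabla$ after the cancellation of the symmetric $\tilde h$-terms, your Leibniz/divergence handling of ${\mathcal I}_\nabla$ (including the cancellation of the two mean-curvature remainders via \eqref{E-divN} and \eqref{E-divP}), and the conformal case are all correct and coincide with the paper's computations.

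The one step that fails is the sign of ${\mathcal I}_T$. Carrying out the contraction with the paper's definitions, $g(\tilde T(\mathcal{E}_i,S^\sharp\mathcal{E}_j)-\tilde T(S^\sharp\mathcal{E}_i,\mathcal{E}_j),E_a)=g([S^\sharp,\tilde T^\sharp_a]\mathcal{E}_i,\mathcal{E}_j)$ and $g(\tilde h(\mathcal{E}_i,\mathcal{E}_j),E_a)=g(\tilde A_a\mathcal{E}_i,\mathcal{E}_j)$, so summing over $i,j$ and using cyclicity of the trace gives
\begin{equation*}
 {\mathcal I}_T=\sum\nolimits_a\tr\big(\tilde A_a[S^\sharp,\tilde T^\sharp_a]\big)
 =\sum\nolimits_a\tr\big((\tilde T^\sharp_a\tilde A_a-\tilde A_a\tilde T^\sharp_a)S^\sharp\big)
 =\tr(\tilde{\mathcal K}S^\sharp)=+\langle\tilde{\mathcal K}^\flat,S\rangle ,
\end{equation*}
i.e.\ the operator order produced by the contraction is $\tilde T^\sharp_a\tilde A_a$, not $\tilde A_a\tilde T^\sharp_a$ as you assert; its symmetric part is $+\frac12\tilde{\mathcal K}$, not $-\frac12\tilde{\mathcal K}$, so ${\mathcal I}_T=+\langle\tilde{\mathcal K}^\flat,S\rangle$ rather than your $-\langle\tilde{\mathcal K}^\flat,S\rangle$. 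This is precisely what the paper's own proof obtains (its last display reads $\langle\widetilde{\Div}\,\tilde h-g(\tilde h,\tilde H)+\tilde{\cal K}^\flat,S\rangle-\Div(\langle\tilde h,S\rangle)$, i.e.\ $\langle\Div\tilde h+\tilde{\cal K}^\flat,S\rangle-\Div(\langle\tilde h,S\rangle)$ after \eqref{E-divP}), and it is the sign used downstream: \eqref{E-Sc-var1a} carries $-\tilde{\cal K}^\flat$ (coming from $-\dt\|\tilde h\|^2$) and the Euler-Lagrange equation \eqref{E-main-0ih} carries $+\tilde{\cal K}^\flat$ on its left side. In other words, the minus sign printed in the first line of \eqref{E-h2T2-D1} is a misprint of the paper, inconsistent with its own proof; your proposal reproduces the misprint, but only through the incorrect operator-order claim, so as a derivation it does not stand. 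Note that the discrepancy is invisible both in the conformal identities \eqref{E-h2T2-conf1} and in every traced consequence, since $\tr\tilde{\mathcal K}=0$, but for general ${\mathcal D}$-variations the sign matters.
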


\begin{proof}
Assume $\nabla_{a}\,{\mathcal E}_i\in{\mathcal D}_x$ at a point $x\in M$.
We calculate using \eqref{eq2G} and Lemmas~\ref{prop-Ei-a} and~\ref{L-tildeHh}.
First we obtain~\eqref{E-h2T2-D1}$_1$:
\begin{eqnarray*}
 \dt\,\|\,\tilde T\,\|^2 \eq 2\sum\nolimits_{\,i,j,a}g(\tilde T({\mathcal E}_i,{\mathcal E}_j), E_a)
 \,g\big(\tilde T({\dt\mathcal E}_i,{\mathcal E}_j) +\tilde T({\mathcal E}_i,\dt{\mathcal E}_j), E_a\big)  \\
 \eq -\sum\nolimits_{\,i,j,a}g(\tilde T({\mathcal E}_i,{\mathcal E}_j), E_a)
 \,g\big(\tilde T(S^\sharp({\mathcal E}_i),{\mathcal E}_j)+\tilde T({\mathcal E}_i,S^\sharp({\mathcal E}_j)), E_a\big)  \\
 \eq -\!\sum\nolimits_{\,i,j,a}g(\tilde T^\sharp_a({\mathcal E}_i),{\mathcal E}_j)
 \,g((\tilde T^\sharp_a S^\sharp +S^\sharp\tilde T^\sharp_a)({\mathcal E}_i), {\mathcal E}_j) \\
 \eq -\!\sum\nolimits_{\,i,a}g(
 (\tilde T^\sharp_a S^\sharp{+}S^\sharp\tilde T^\sharp_a)({\mathcal E}_i), \tilde T^\sharp_a({\mathcal E}_i))
 =\sum\nolimits_{\,i,a}g(
 ((\tilde T^\sharp_a)^2S^\sharp+\tilde T^\sharp_a S^\sharp\tilde T^\sharp_a)({\mathcal E}_i),\,{\mathcal E}_i) \\
 \eq 2\sum\nolimits_{\,a}\tr_{g}((\tilde T^\sharp_a)^2 S^\sharp)
 =2\tr_{g}(\widetilde{\mathcal T} S^\sharp) =\<2\,\widetilde{\mathcal T}^\flat,\ S\>.
\end{eqnarray*}
Next, using \eqref{E-divP}, we obtain \eqref{E-h2T2-D1}$_1$:
\begin{eqnarray*}
 \dt\,\|\,\tilde h\,\|^2 \eq 2\sum\nolimits_{\,i,j,a}g(\tilde h({\mathcal E}_i,{\mathcal E}_j),\, E_a)
 g(\dt(\tilde h({\mathcal E}_i,{\mathcal E}_j)),\, E_a)  \\
 \eq 2\sum\nolimits_{\,i,j,a}g(\tilde h({\mathcal E}_i,{\mathcal E}_j),\, E_a)
 \,g\big((\dt\tilde h)({\mathcal E}_i,{\mathcal E}_j)
 +\tilde h(\dt{\mathcal E}_i,{\mathcal E}_j)
 +\tilde h({\mathcal E}_i,\dt{\mathcal E}_j),\, E_a\big)  \\
  \eq\!\sum\nolimits_{\,i,j,a}g(\tilde h({\mathcal E}_i,{\mathcal E}_j),\, E_a)
 \,\big(g(\tilde T({\mathcal E}_i,S^\sharp({\mathcal E}_j))-\tilde T(S^\sharp({\mathcal E}_i),{\mathcal E}_j),\,E_a) -\nabla_a\,S({\mathcal E}_i,{\mathcal E}_j)\big)  \\
  \eq \sum\nolimits_{\,i,j,a} \Big(
 g(\tilde A_a({\mathcal E}_i),\,{\mathcal E}_j)\,g([S^\sharp, \tilde T^\sharp_{a}]({\mathcal E}_i),\,{\mathcal E}_j)
 -\nabla_a\,\big(g(S({\mathcal E}_i,{\mathcal E}_j)\,\tilde h({\mathcal E}_i,{\mathcal E}_j),\, E_a)\big)   \\
 \minus \nabla_a\,g(\tilde h({\mathcal E}_i,{\mathcal E}_j),\, E_a)\,S({\mathcal E}_i,\,{\mathcal E}_j)\Big)
 = \<\widetilde{\Div}\,\tilde h - g(\tilde h,\tilde H) +\tilde{\cal K}^\flat,\, S\> -\Div(\<\tilde h,\,S\>).
\end{eqnarray*}
Next, using $S(\tilde H,\tilde H)=0$ (since $S$ is ${\mathcal D}$-truncated) we obtain
\begin{eqnarray*}
 \dt\,\|\,\tilde H\,\|^2 \eq \dt g(\tilde H,\,\tilde H) =2\,g(\dt\tilde H,\,\tilde H)
 =-g(\nabla(\tr S^\sharp),\,\tilde H).
\end{eqnarray*}
Note that $g(\nabla(\tr S^\sharp),\,\tilde H)=\Div((\tr S^\sharp)\tilde H)-(\Div\tilde H)\tr S^\sharp$;
hence, \eqref{E-h2T2-D1}$_2$ follows. We have
\begin{eqnarray*}
 \dt\,\|\,H\,\|^2 \eq \dt g(H,\,H) =S(H,H) + 2\,g(\dt H,\,H)  \\
  \eq S(H,H) -2\,g(S^\sharp(H),\,H) =-S(H,H), \\
\dt\,\|\,h\,\|^2 \eq \dt\sum\nolimits_{\,i,\,a,\,b} g(h(E_a,\,E_b),\,{\mathcal E}_i)^2 \\
 \eq 2\sum\nolimits_{\,i,\,a,\,b} g(h(E_a,\,E_b),\,{\mathcal E}_i)\,\dt g(h(E_a,\,E_b),\,{\mathcal E}_i)   \\
 \eq -\!\sum\nolimits_{\,i,\,a,\,b} g(h(E_a,\,E_b),\,{\mathcal E}_i)\,
 g(h(E_a,\,E_b),\,S^\sharp({\mathcal E}_i))  \\
 \eq -\!\sum\nolimits_{\,a,\,b} S(h(E_a,\,E_b),\,h(E_a,\,E_b))\,.
\end{eqnarray*}
From the above, \eqref{E-h2T2-D1}$_3$ follows. Finally, we have \eqref{E-h2T2-D1}$_4$:
\begin{eqnarray*}
 \dt\,\|\,T\,\|^2 \eq \dt\sum\nolimits_{\,i,\,a,\,b} g(T(E_a,\,E_b),\,{\mathcal E}_i)^2 \\
 \eq 2\sum\nolimits_{\,i,\,a,\,b} g(T(E_a,\,E_b),\,{\mathcal E}_i)\,\dt(g(T(E_a,\,E_b),\,{\mathcal E}_i))  \\
 \eq 2\sum\nolimits_{\,i,\,a,\,b} g(T(E_a,\,E_b),\,{\mathcal E}_i)\,
 \big( S(T(E_a,\,E_b),\,{\mathcal E}_i) +g(T(E_a,\,E_b),\,\dt{\mathcal E}_i)\big)   \\
 \eq \sum\nolimits_{\,i,\,a,\,b} g(T(E_a,\,E_b),\,{\mathcal E}_i)\,g(T(E_a,\,E_b),\,S^\sharp({\mathcal E}_i)) \\
 \eq \sum\nolimits_{\,a,\,b} S(T(E_a,\,E_b),\,T(E_a,\,E_b)).
\end{eqnarray*}
For ${\mathcal D}$-conformal case we use $\nabla_a\,S=(\nabla_a\,s)\,g^\perp$ and the above.
\end{proof}

\subsection{Directional derivatives of $J_{\,\rm mix}$ on almost-product manifolds}
\label{subsec:EU-general}

In this section we compute directional derivatives $D_g J_{\,\rm mix}$ of the functional \eqref{E-DJ}
on a closed Riemannian manifold $(M,g)$ with almost-product structure.
 For any $f \in L^1 ({M}, {\rm d}\, {\rm vol}_g )$, denote~by
\[
 f({M}, g) := \frac{1}{{\rm Vol}({M}, g)} \int_{M} f \, {\rm d} \, {\rm vol}_g
\]
the mean value of $f$ on ${M}$ with respect to ${\rm d} \, {\rm vol}_g$.
 Together with $g_t$ of \eqref{e:Var}, consider the metrics
\begin{eqnarray}\label{e:Var-Bar}
 \bar g_t \eq \phi_t g^\perp_t + \tilde g, \;\;\;
 \phi_t \equiv \big({\rm Vol}({M}, g_t)/{\rm Vol}({M}, g)\big)^{-2/p}\,,\ |t|<\eps\\
\nonumber
 {\rm respectively,}\ \ \bar g_t \eq g^\perp +\phi_t \tilde g_t,\quad
 \phi_t \equiv \big({\rm Vol}({M}, g_t)/{\rm Vol}({M}, g)\big)^{-2/n}\,,\ |t|<\eps\,.
\end{eqnarray}
We will show that ${\rm Vol}({M},\bar g_t) = {\rm Vol}({M}, g)$ for all $t$.

\begin{prop}\label{P-1-5}
The $\mathcal D$-variations $($respectively, $\widetilde{\mathcal D}$-variations$)$ of the functional \eqref{E-Jmix}, corresponding to $\bar g_t$ and $g_t$,
are related~by
\begin{eqnarray}\label{E-varJh-f}
 {\rm\frac{d}{dt}}\big\{J_{\,\rm mix} (\bar{g}_t)\big\}_{t=0}
 ={\rm\frac{d}{dt}}\big\{J_{\,\rm mix} ({g}_t)\big\}_{t=0}
 - \frac12\,\Sm^*_{\,\rm mix}(M,g) \int_{M} \big(\tr_{g} S \big)\,{\rm d}\vol_g \,,
\end{eqnarray}
where
\begin{equation}\label{E-S-star}
 \Sm^*_{\,\rm mix}(M,g) =\left\{\begin{array}{cc}
  \Sm_{\,\rm mix}(M,g) -\frac2p\,(\Sm_{\,\rm ex} +2\,\|\tilde T\|^2 -\|T\|^2)(M,g) &
  {\rm for}~{\mathcal D}{\rm-variations},\\
  \Sm_{\,\rm mix}(M,g) -\frac2n\,(\widetilde\Sm_{\,\rm ex} -\|\tilde T\|^2 +2\,\|T\|^2)(M,g) &
  {\rm for}~\widetilde{\mathcal D}{\rm-variations}\,.
  \end{array}\right.
\end{equation}
\end{prop}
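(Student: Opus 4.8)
The plan is to regard $\bar g_t$ as the image of $g_t$ under a $\mathcal D$-scaling by the \emph{constant} factor $\phi_t$, and then to differentiate by the chain rule in the two parameters $t$ and the scaling variable. First I would dispose of the volume claim: since $\bar g_t$ differs from $g_t$ only by multiplying the $\mathcal D$-block $g^\perp_t$ by the constant $\phi_t$, and $\dim{\mathcal D}=p$, one has $d\vol_{\bar g_t}=\phi_t^{\,p/2}\,d\vol_{g_t}$; by the definition of $\phi_t$ in \eqref{e:Var-Bar} this gives $\vol(M,\bar g_t)=\phi_t^{\,p/2}\vol(M,g_t)=\vol(M,g)$ for all $t$.

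Next I would set $F(t,c)=J_{\,\rm mix}(c\,g^\perp_t+\tilde g)$, so that $J_{\,\rm mix}(\bar g_t)=F(t,\phi_t)$ while $J_{\,\rm mix}(g_t)=F(t,1)$, and apply the chain rule at $(t,c)=(0,1)$ (note $\phi_0=1$):
\[
 \frac{d}{dt}\big\{J_{\,\rm mix}(\bar g_t)\big\}_{t=0}
 = \frac{d}{dt}\big\{J_{\,\rm mix}(g_t)\big\}_{t=0} + \phi'_0\,\partial_c F(0,1).
\]
The factor $\phi'_0$ I would compute by logarithmic differentiation of $\phi_t=(\vol(M,g_t)/\vol(M,g))^{-2/p}$ together with the standard volume variation $\frac{d}{dt}\vol(M,g_t)|_{t=0}=\frac12\int_M(\tr_g S)\,d\vol_g$, which yields $\phi'_0=-\frac{1}{p\,\vol(M,g)}\int_M(\tr_g S)\,d\vol_g$. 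It then remains to identify $\partial_c F(0,1)$, the derivative of $J_{\,\rm mix}$ along the $\mathcal D$-scaling $c\mapsto c\,g^\perp+\tilde g$, i.e.\ the ${\mathcal D}$-conformal variation whose infinitesimal generator is $S=g^\perp$ (the case $s\equiv 1$ of Lemma~\ref{L-H2h2-D}).

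To evaluate $\partial_c F(0,1)$ I would invoke the Walczak representation \eqref{eq-ran-ex}: on the closed manifold $M$ the term $\Div(H+\tilde H)$ integrates to zero for every metric of the family, so $J_{\,\rm mix}(g_t)=\int_M f(g_t)\,d\vol_{g_t}$ with $f=\Sm_{\,\rm ex}+\widetilde\Sm_{\,\rm ex}+\|T\|^2+\|\tilde T\|^2$. Differentiating $f$ term by term with the ${\mathcal D}$-conformal formulas \eqref{E-h2T2-conf1} at $s=1$ gives $\partial_t\|\tilde h\|^2=\partial_t\|\tilde H\|^2=0$, hence $\partial_t\widetilde\Sm_{\,\rm ex}=0$, while $\partial_t\Sm_{\,\rm ex}=-\Sm_{\,\rm ex}$, $\partial_t\|T\|^2=\|T\|^2$ and $\partial_t\|\tilde T\|^2=-2\|\tilde T\|^2$; the volume form contributes $\frac12(\tr_g g^\perp)f=\frac p2\,f$. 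Collecting these and passing to mean values (using again that $\Div(H+\tilde H)$ has zero mean, so $f(M,g)=\Sm_{\,\rm mix}(M,g)$) I obtain precisely $\frac{2}{p\,\vol(M,g)}\,\partial_c F(0,1)=\Sm^*_{\,\rm mix}(M,g)$, the ${\mathcal D}$-variation entry of \eqref{E-S-star}. Substituting this together with $\phi'_0$ into the chain-rule identity produces \eqref{E-varJh-f}; the $\widetilde{\mathcal D}$-case follows by the duality used throughout, replacing $p$ by $n$ and interchanging the roles of the two distributions.

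The main obstacle I expect is the bookkeeping in this last step: one must combine the six term-by-term conformal derivatives with the volume contribution $\frac p2 f$ and verify the exact cancellations — in particular that $\widetilde\Sm_{\,\rm ex}$ drops out entirely and that the surviving combination $-\Sm_{\,\rm ex}+\|T\|^2-2\|\tilde T\|^2$, after scaling by $2/p$, reproduces $-\frac2p(\Sm_{\,\rm ex}+2\|\tilde T\|^2-\|T\|^2)$. Everything else is the chain rule and the standard volume-variation formula.
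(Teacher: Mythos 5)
Your proposal is correct and follows essentially the same route as the paper: both reduce $J_{\,\rm mix}$ to $\int_M Q\,{\rm d}\vol_g$ with $Q=\Sm_{\,\rm ex}+\widetilde\Sm_{\,\rm ex}+\|T\|^2+\|\tilde T\|^2$ via Walczak's formula and the Divergence Theorem, compute $\phi'_0$ identically, and exploit how each term of $Q$ and the volume form scale under the constant $\mathcal D$-scaling $\phi_t$. Your explicit two-variable chain rule $F(t,c)$ is merely a cleaner packaging of the computation the paper performs directly (the paper uses the finite scaling laws $\|T\|^2_{\bar g}=\phi\|T\|^2_g$, etc., where you use the infinitesimal formulas \eqref{E-h2T2-conf1} at $s=1$), and both yield the same identification of the correction term with $\Sm^*_{\,\rm mix}(M,g)$.
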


\proof
 By \eqref{eq-ran-ex} and the Divergence Theorem, we have
\begin{equation}\label{eq62}
 J_{\,\rm mix}(g)=\int_{M} Q(g)\ {\rm d}\vol_g\,,
\end{equation}
where $Q(g):=\Sm_{\,\rm mix} -\Div(H+\tilde H)$ is represented using \eqref{eq-ran-ex} as
\begin{equation}\label{E-Q-def}
 Q(g) = \Sm_{\,\rm ex}(g) +\widetilde\Sm_{\,\rm ex}(g) +\|T\|_{g}^2 +\|\tilde T\|_{g}^2\,.
\end{equation}
The volume form evolves as (cf.~\cite{rw-m})
\begin{equation}\label{E-dtdvol}
 \partial_t\,\big({\rm d}\vol_{g_t}\!\big) = \frac12\,(\tr_{g_t} S_t)\,{\rm d}\vol_{g_t}.
\end{equation}
Thus,
\begin{equation}\label{E-varJh-init}
 {\rm\frac{d}{dt}}\big\{J_{\,\rm mix} ({g}_t)\big\}_{t=0} =\int_{M} \!\Big\{\dt Q(g_t)_{|\,t=0}
 +\frac12\big(\Sm_{\,\rm mix}(g) -\Div(H + \tilde H)\big)\tr_{g} S\Big\}\,{\rm d}\vol_g\,.
\end{equation}
Let us fix a ${\mathcal D}$-variation \eqref{e:Var}$_1$,
the case of $\widetilde{\mathcal D}$-variations is studied similarly.
As $\bar g_t$ are ${\mathcal D}$-conformal to $g_t$  with constant scale $\phi_t$,
their volume forms are related~as
\begin{equation}\label{e:Conf-Vol}
 {\rm d}\vol_{\,\bar g_t} = \phi^{p/2}_t{\rm d}\vol_{\,g_t};
\end{equation}
hence, ${\rm Vol}({M},\bar g_t) = \int_{\,M} {\rm d}\vol_{\,\bar g_t} = {\rm Vol}({M}, g)$.
 Let us differentiate in (\ref{e:Conf-Vol}) so that to obtain
\begin{eqnarray*}
 \dt\;({\rm d}\vol_{\,\bar g_t}) \eq (\phi_t^{p/2})^\prime \; {\rm d}\vol_{g_t} + \phi_t^{p/2}\;
 \dt\,({\rm d}\vol_{g_t}) \\
 \eq \frac12\Big(\tr S^\sharp_t
 -\frac{1}{{\rm Vol}({M}, g_t)}\int_{M}(\tr_{g_t} S_t)\,{\rm d}\vol_{\,g_t}\Big)\,{\rm d}\vol_{\,\bar g_t}\,.
\end{eqnarray*}
Here we used (\ref{E-dtdvol}) and the fact that $\phi_0=1$ and
\begin{eqnarray*}
 \phi^\prime_t \eq -\frac2p\,\Big(\frac{{\rm Vol}({M},g_t)}{{\rm Vol}({M}, g)}\Big)^{-\frac2p-1}
 \frac{1}{{\rm Vol}({M}, g)} \int_{M} \dt({\rm d}\vol_{\,g_t})  \\
 \eq -\frac{\phi_t}{p\,{\rm Vol}({M}, g_t)}\int_{M} (\tr_{g_t} S_t)\,{\rm d}\vol_{\,g_t}.
\end{eqnarray*}
For the ${\mathcal D}$-scaling $\,\bar g = \phi\,g^\perp + \tilde g$ of $\,g = g^\perp + \tilde g$,
using \eqref{E-h2T2-conf1}, we have
\begin{eqnarray*}
 &&\|T\|_{\bar g}^2=\phi\,\|T\|_{g}^2,\quad
 \|\tilde T\|_{\bar g}^2=\phi^{-2}\|\tilde T\|_{g}^2,\\
 &&\|h\|_{\bar g}^2=\phi^{-1}\|h\|_{g}^2,\quad
 \|H\|_{\bar g}^2=\phi^{-1}\|H\|_{g}^2,\quad
 \|\tilde h\|_{\bar g}^2=\|\tilde h\|_{g}^2,\quad
 \|\tilde H\|_{\bar g}^2=\|\tilde H\|_{g}^2.
\end{eqnarray*}
Substituting into $Q(\bar g)$ due to definition \eqref{E-Q-def}, we obtain
\[
 Q(\bar g) = Q(g) +(\phi^{-1}-1)\,\Sm_{\,\rm ex}(g) +(\phi^{-2}-1)\,\|\tilde T\|_{g}^2 +(\phi-1)\,\|T\|_{g}^2\,.
\]
(For example, $Q(\bar g) = Q(g)$ when $n=1$ and $\tilde T=0$).
The derivation of the above yields
\[
 \dt Q(\bar g_t)_{|\,t=0} = \dt Q(g_t)_{|\,t=0}
 -\phi^{\,\prime}_0\,\big(\Sm_{\,\rm ex}(g) +2\,\|\tilde T\|_{g}^2 -\|T\|_{g}^2\big)\,,
\]
where $\phi^{\,\prime}_0=-\frac{1}p\,\frac{1}{{\rm Vol}({M}, g)}\int_{M}\,(\tr_{g} S)\,{\rm d}\vol_{\,g}$.
Hence, the $\mathcal D$-variation of the functional \eqref{E-Jmix}~is
\begin{eqnarray*}
 &&\hskip-10mm{\rm\frac{d}{dt}}\big\{J_{\,\rm mix} (\bar{g}_t)\big\}_{t=0} \\
 && = \int_{M} \Big\{\,\dt Q(\bar g_t)_{\,|\,t=0} +\frac12\,Q(g)\big(\tr_{g} S
 -\frac{1}{{\rm Vol}({M}, g)}\!\int_{M} (\tr_{g} S)\,{\rm d}\vol_g\big)\Big\}\,{\rm d}\vol_g  \\
 && = \!\int_{M}\dt Q(g_t)_{\,|\,t=0}\,{\rm d}\vol_g +\frac12\int_{M} Q(g)(\,\tr_{g} S)\,{\rm d}\vol_g
 +\int_{M}\big(\tr_{g} S\big)\,{\rm d}\vol_g \times\\
 &&
 \times\frac{1}{2\,{\rm Vol}({M}, g)} \int_{M}\Big[\,
 \frac2p\,\big(\Sm_{\,\rm ex} +2\,\|\tilde T\|_{g}^2 -\|T\|_{g}^2 \big)-Q(g)
  \Big]\,{\rm d}\vol_g \\
 && ={\rm\frac{d}{dt}}\big\{J_{\,\rm mix} ({g}_t)\big\}_{t=0}
 -\frac12\,\Sm^*_{\,\rm mix}(M,g)\int_{M} \big(\tr_{g} S \big)\,{\rm d}\vol_g \,,\qed
\end{eqnarray*}

\begin{rem}\rm
It should be observed that we work with two kinds of variations, (\ref{e:Var}) and (\ref{e:Var-Bar});
the last one of which preserves the volume of ${M}$.
Formulas containing $\Sm^*_{\,\rm mix}(M,g)$ correspond to $1$-parameter variations of the form (\ref{e:Var-Bar}).
To obtain similar formulas, corresponding to $1$-parameter variations of the form (\ref{e:Var}), one should merely delete the mean value terms $\Sm^*_{\rm mix}(M,g)$ in the previous~identities.
\end{rem}

\begin{example}[Variations not preserving the volume of $(M,g)$]\label{sec:homothehy}\rm
In general, variations $g_t$ of \eqref{e:Var} do not preserve the volume of $(M,g)$,
and for critical metrics one may obtain only trivial solutions $J_{\,\rm mix}(g)=0$.
 Let $\widetilde{\mathcal D}$ be a codimension one integrable distribution
(i.e., $p=1$ and $T=0$) on a~closed Riemannian manifold $(M^{n+1},g)$ with $g = g^\perp + \tilde g$,
see Sect.~\ref{subsec:codim1fol}.
Define the functions $\tau_1=\tr_{g} h$ and $\tau_2=\|h\|_g^2$.
For a canonical ${\mathcal D}$-variation $g_t= g^\perp_t + \tilde g$,
i.e., $g^\perp_t=(1+t)\,g^\perp_0\ (|t|<1)$,
we have $\dt g_t =s\,g^\perp_t$ with $s=1/(1+t)$, and ${\rm d}\vol_t=(1+t)^{1/2}{\rm d}\vol$.
Since $(\tau^2_1-\tau_2)_t=(\tau^2_1-\tau_2)_0/(1+t)$, we~find
\[
 J_{\,\rm mix}(g_t) =\int_{M}(\tau_1^{2}-\tau_2)\,{\rm d}\vol_t =(1+t)^{-\frac12}J_{\,\rm mix}(g)
 \ \Rightarrow \
 {\rm\frac{d}{dt}}\big\{J_{\,\rm mix} ({g}_t)\big\}_{t=0} =-\frac12\,J_{\,\rm mix}(g).
\]
Thus, if $g$ is a~critical point of the functional $J_{\,\rm mix}$, then $J_{\,\rm mix}(g)=0$.
\end{example}

\subsection{Euler-Lagrange equations for $J_{\,\rm mix}$ on almost-product manifolds}
\label{subsec:EU-critical}

In this section we  write down Euler-Lagrange equations of the variational principle $\delta J_{\,\rm mix}(g)=0$
on a closed manifold $M$ with almost-product structure.

\begin{prop}\label{T-main00}
Let $\widetilde{\mathcal D}$ and ${\mathcal D}$ be complementary distributions on a closed manifold~$M$.
If $g\in{\rm Riem}(M,\,\widetilde{\mathcal D},\,{\mathcal D})$ is a critical point of $J_{\rm mix}$
with respect to variations \eqref{E-Sdtg}--\,\eqref{E-Sdtg-2}, then the following Euler-Lagrange equations
$($relating the quantities of extrinsic geo\-metry$)$ are satisfied:
\begin{eqnarray}\label{E-main-0ih}
 &&\hskip-9mm
 \Div(\tilde h -\tilde H\,g^\perp)
 -2\,\widetilde{\mathcal T}^\flat +\Phi_h +\Phi_T +\tilde{\cal K}^\flat\\
\nonumber
 &&\hskip-6mm =\frac12\,\big( \Sm_{\,\rm ex}+\widetilde\Sm_{\,\rm ex}
 +\|T\|^2+\|\tilde T\|^2 -\Sm^*_{\,\rm mix}(M,g) \big)\,g^\perp
 \quad ({\rm for}~{\mathcal D}{\rm-variations}),\\
 \label{E-main-0i-dualh}
 &&\hskip-9mm
 \Div(h -H\,\tilde g)
 -2\,{\mathcal T}^\flat +\Phi_{\tilde h} +\Phi_{\tilde T} +{\cal K}^\flat\\
\nonumber
 &&\hskip-6mm =\frac12\,\big(
 \Sm_{\,\rm ex}+\widetilde\Sm_{\,\rm ex} +\|T\|^2+\|\tilde T\|^2-\Sm^*_{\,\rm mix}(M,g) \big)\,\tilde g
 \quad ({\rm for}~\widetilde{\mathcal D}{\rm-variations})\,.
\end{eqnarray}
Using the partial Ricci tensor, we rewrite \eqref{E-main-0ih} and \eqref{E-main-0i-dualh}
 as
\begin{eqnarray}\label{E-main-0i}
 &&\hskip-7mm {r}_{\,{\mathcal D}} -\<\tilde h,\tilde H\> +\widetilde{\mathcal A}^\flat -\widetilde{\mathcal T}^\flat
 +\Phi_h +\Phi_T +\Psi -{\rm Def}_{\mathcal D}\,H +\tilde{\cal K}^\flat \\
\nonumber
 &&=\frac12\,\big(  \Sm_{\,\rm mix} - \Sm^*_{\,\rm mix}(M,g) + \Div(\tilde H - H)\big)\,g^\perp
 \quad ({\rm for}~{\mathcal D}{\rm-variations}),\\
 \label{E-main-0i-dual}
 &&\hskip-7mm {r}_{\,\widetilde{\mathcal D}} -\<h, H\> +{\mathcal A}^\flat -{\mathcal T}^\flat
 +\Phi_{\tilde h} +\Phi_{\tilde T} +\widetilde\Psi -{\rm Def}_{\widetilde{\mathcal D}}\,\tilde H +{\cal K}^\flat \\
\nonumber
 &&=\frac12\,\big( \Sm_{\,\rm mix} - \Sm^*_{\,\rm mix}(M,g) +\Div(H -\tilde H)\big)\,\tilde g
  \quad ({\rm for}~\widetilde{\mathcal D}{\rm-variations}).
\end{eqnarray}
\end{prop}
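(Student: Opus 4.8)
**

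The plan is to compute the directional derivative $D_g J_{\,\rm mix}$ directly by differentiating the functional under a $\mathcal{D}$-variation and collecting terms, then to read off the Euler-Lagrange equations by invoking the fundamental lemma of the calculus of variations. Since the volume-preserving variations $\bar g_t$ of \eqref{e:Var-Bar} are the natural class (Proposition~\ref{P-1-5} relates them to the unconstrained ones), I would work with $\frac{d}{dt}\{J_{\,\rm mix}(\bar g_t)\}_{t=0}$ so that the mean-value term $\Sm^*_{\,\rm mix}(M,g)$ appears automatically. The starting point is the representation \eqref{eq62}--\eqref{E-Q-def}, namely $J_{\,\rm mix}(g)=\int_M Q(g)\,{\rm d}\vol_g$ with $Q(g)=\Sm_{\,\rm ex}+\widetilde\Sm_{\,\rm ex}+\|T\|^2+\|\tilde T\|^2$, which has already discarded the divergence terms via the Divergence Theorem on the closed manifold $M$. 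This is crucial: it means I never have to differentiate the awkward $\Div(H+\tilde H)$ term directly.

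The heart of the computation is to differentiate each summand of $Q$ along a $\mathcal{D}$-variation using \lemref{L-H2h2-D}. Writing $\Sm_{\,\rm ex}=\|H\|^2-\|h\|^2$ and $\widetilde\Sm_{\,\rm ex}=\|\tilde H\|^2-\|\tilde h\|^2$, the four formulas in \eqref{E-h2T2-D1} give me everything: $\partial_t(\|h\|^2-\|H\|^2)=\langle\Phi_h,S\rangle$, $\partial_t\|\tilde h\|^2=\langle\Div\tilde h-\tilde{\cal K}^\flat,S\rangle-\Div(\langle\tilde h,S\rangle)$, $\partial_t\|\tilde H\|^2=\langle(\Div\tilde H)g,S\rangle-\Div((\tr_g S)\tilde H)$, $\partial_t\|\tilde T\|^2=\langle 2\widetilde{\mathcal T}^\flat,S\rangle$, and $\partial_t\|T\|^2=-\langle\Phi_T,S\rangle$. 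I would assemble $\partial_t Q(g_t)_{|t=0}$ as a pairing $\langle\,\cdot\,,S\rangle$ against the infinitesimal variation plus exact divergence terms. After combining with the $\frac12(\tr_g S)$ factor from $\partial_t({\rm d}\vol)$ in \eqref{E-dtdvol}, integrating over the closed $M$ kills the divergence terms, and Proposition~\ref{P-1-5} converts the residual $\frac12 Q(g)\tr_g S$ contribution together with the $\tr_g S$ contributions from $\Sm_{\,\rm ex},\|\tilde T\|^2,\|T\|^2$ into the coefficient $-\frac12\Sm^*_{\,\rm mix}(M,g)\int_M(\tr_g S)$.

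The key bookkeeping step is that $\tr_g S=\langle g^\perp,S\rangle$ for a $\mathcal{D}$-truncated $S$, so every trace term can be written as a pairing against $g^\perp$; this is what produces the right-hand side $\frac12(\cdots)g^\perp$ in \eqref{E-main-0ih}. Since $D_g J_{\,\rm mix}(S)=0$ must hold for all $S\in{\mathfrak M}_{\mathcal D}$, and all the ${\mathcal D}$-truncated symmetric tensors that arise are paired against $S$, the fundamental lemma forces the ${\mathcal D}$-truncated symmetric tensor multiplying $S$ to vanish identically, which is precisely \eqref{E-main-0ih}. The $\widetilde{\mathcal D}$-variation case \eqref{E-main-0i-dualh} follows by the duality interchanging $(\widetilde{\mathcal D},{\mathcal D})$, $(h,\tilde h)$, $(H,\tilde H)$, $(T,\tilde T)$, $n\leftrightarrow p$, as recorded in the lemmas.

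For the second form, I would substitute the relation \eqref{E-genRicN} from Proposition~\ref{L-CC-riccati}, which expresses $r_{\,\mathcal D}$ in terms of $\Div\tilde h$, $\langle\tilde H,\tilde h\rangle$, $\widetilde{\mathcal A}^\flat$, $\widetilde{\mathcal T}^\flat$, $\Psi$ and ${\rm Def}_{\mathcal D}H$. Solving \eqref{E-genRicN} for $\Div\tilde h$ and inserting into \eqref{E-main-0ih} (also rewriting $\Div(\tilde H\,g^\perp)=(\Div\tilde H)g^\perp$ and using $\tr_g(\Div\tilde h)=\Div\tilde H$ from the Remark) should convert the extrinsic-geometry form into the partial-Ricci form \eqref{E-main-0i}, with the coefficient on the right-hand side reorganizing via \eqref{eq-ran-ex} into $\frac12(\Sm_{\,\rm mix}-\Sm^*_{\,\rm mix}+\Div(\tilde H-H))g^\perp$. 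The main obstacle I anticipate is the sign-and-symmetrization bookkeeping: correctly extracting the symmetric ${\mathcal D}$-truncated part of each pairing (note that $\Div\tilde h$ and $\tilde{\cal K}^\flat$ enter \eqref{E-h2T2-D1} with specific signs, and $\tilde{\cal K}$ is the trace-free commutator term that must survive), and verifying that the skew parts cancel so that the fundamental lemma applies to a genuinely symmetric tensor. Matching the partial-Ricci form will require careful use of \eqref{E-genRicN} to ensure that the $\widetilde{\mathcal A}^\flat$, $-\widetilde{\mathcal T}^\flat$, $\Psi$ and $-{\rm Def}_{\mathcal D}H$ terms land with the stated signs.
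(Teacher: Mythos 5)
Your proposal is correct and follows essentially the same route as the paper's own proof: representing $J_{\,\rm mix}$ via \eqref{eq62}--\eqref{E-Q-def}, differentiating $Q$ with Lemma~\ref{L-H2h2-D} and the volume-form evolution \eqref{E-dtdvol}, passing to the volume-preserving variations through Proposition~\ref{P-1-5} to produce $\Sm^*_{\,\rm mix}(M,g)$, setting the resulting integrand paired against an arbitrary ${\mathcal D}$-truncated $S$ to zero, and finally rewriting via \eqref{eq-ran-ex} and substituting \eqref{E-genRicN}$_1$ (and its dual) to pass to the partial-Ricci form. The only difference is organizational (you spell out the term-by-term assembly of $\partial_t Q$ that the paper compresses into \eqref{E-Sc-var1a}), not mathematical.
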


\begin{proof}
Since \eqref{E-main-0i-dualh} is dual to \eqref{E-main-0ih}, we shall prove \eqref{E-main-0ih} only.
By Lemma~\ref{L-H2h2-D} to \eqref{E-Q-def}, and using \eqref{E-divP} and the Divergence Theorem, we obtain
\begin{equation}\label{E-Sc-var1a}
 \int_{M}\dt Q\ {\rm d}\vol_g = \int_{M}\big\<
 -\Div(\tilde h -\tilde H\,g^\perp) +2\,\widetilde{\mathcal T}^\flat -\Phi_h -\Phi_T
 -\tilde{\cal K}^\flat,\ S\big>\,{\rm d}\vol_g,
\end{equation}
where $S=\{\dt g_t\}_{\,|\,t=0}\in{\mathfrak M}$.
Note that $\tr_{g} S=\<S,\,g\>$. Then by \eqref{E-varJh-init}, we have
\begin{eqnarray}\label{E-varJh-init2}
\nonumber
 {\rm\frac{d}{dt}}\big\{J_{\,\rm mix} ({g}_t)\big\}_{t=0} \eq\int_{M}
 \big\< -\Div(\tilde h -\tilde H\,g^\perp) +2\,\widetilde{\mathcal T}^\flat -\Phi_h -\Phi_T -\tilde{\cal K}^\flat\\
 \plus\frac12\big(\Sm_{\,\rm mix} -\Div(H + \tilde H)\big)\,g,\ S\big>\,{\rm d}\vol_g\,.
\end{eqnarray}
By \eqref{E-varJh-init2} and Proposition~\ref{P-1-5}, we obtain
\begin{eqnarray}\label{E-Jmix-dt-fin}
\nonumber
 {\rm\frac{d}{dt}}\,\big\{J_{\,\rm mix} (\bar g_t)\big\}_{t=0} \eq \!\int_{M}\Big\<
 -\Div(\tilde h -\tilde H\,g^\perp) +2\,\widetilde{\mathcal T}^\flat-\Phi_h-\Phi_T -\tilde{\cal K}^\flat\\
 \plus\frac12\,\big(\Sm_{\rm mix} -\Sm^*_{\,\rm mix}(M,g) -\Div(\tilde H +H) \big)\,g,\ S\Big\>\,{\rm d}\vol_g.
\end{eqnarray}
If $g$ is a critical point of $J_{\,\rm mix}$ with respect to $\mathcal D$-variations,
then the integral in \eqref{E-Jmix-dt-fin} is zero for arbitrary $\mathcal D$-truncated tensor $S\in{\mathfrak M}$,
that yields
\begin{equation}\label{E-main-0ih-temp}
 \Div(\tilde h -\tilde H\,g^\perp) -2\,\widetilde{\mathcal T}^\flat +\Phi_h +\Phi_T +\tilde{\cal K}^\flat
 =\frac12\,\big(\Sm_{\,\rm mix} -\Sm^*_{\,\rm mix}(M,g) - \Div(\tilde H + H) \big)\,g^\perp.
\end{equation}
The dual equation (i.e., for $\widetilde{\mathcal D}$-variations) is
\begin{equation}\label{E-main-0i-dualh-temp}
 \Div(h -H\,\tilde g) -2\,{\mathcal T}^\flat +\Phi_{\tilde h} +\Phi_{\tilde T} +{\cal K}^\flat
 =\frac12\,\big( \Sm_{\,\rm mix} -\Sm^*_{\,\rm mix}(M,g) - \Div\,(H +\tilde H) \big)\,\tilde g.
\end{equation}
Replacing  $\Sm_{\,\rm mix} - \Div\,(H +\tilde H)$ due to \eqref{eq-ran-ex}, we obtain \eqref{E-main-0ih}\,--\,\eqref{E-main-0i-dualh}. Replacing ${\Div}\,\tilde h$ and $\Div h$ in \eqref{E-main-0ih-temp}\,--\,\eqref{E-main-0i-dualh-temp} due to \eqref{E-genRicN}$_1$ and its dual \eqref{E-genRicN-dual}$_1$, we obtain \eqref{E-main-0i}\,--\,\eqref{E-main-0i-dual}.
\end{proof}

\begin{rem}\label{R-int}\rm
Tracing \eqref{E-main-0ih} (and similarly for \eqref{E-main-0i-dualh}), we obtain the equality
 \[
 \mathfrak{R} - \mathfrak{R}(M,g) = 2(1-p)\Div\tilde H +p\,\Div H,
 \]
 where $\mathfrak{R}= p\,\Sm_{\,\rm mix} +4\,\|T\|^2 -2\,\|\tilde T\|^2 +2\,\Sm_{\,\rm ex}$.
 Integrating this over a closed $M$ gives identity.
\end{rem}

\begin{rem}\label{Ex-12}\rm
If $g\in{\rm Riem}(M,\,\widetilde{\mathcal D},\,{\mathcal D})$ is a critical point
of the functional \eqref{E-Jmix} with respect to biconformal variations, then
\begin{eqnarray}\label{E-main-0conf}
 &&(p-2)\,\Sm_{\,\rm ex} +p\,\widetilde\Sm_{\,\rm ex} +(p+2)\,\|T\|^2 +(p-4)\,\|\tilde T\|^2
 +2(p-1)\,\Div\tilde H \\
\nonumber
 &&\hskip32mm  = p\,\Sm^*_{\,\rm mix}(M,g)
 \quad ({\rm for}~{\mathcal D}{\rm-conformal\ variations}),\\
 \label{E-main-0conf-dual}
 && (n-2)\,\widetilde\Sm_{\,\rm ex} +n\,\Sm_{\,\rm ex} +(n+2)\,\|\tilde T\|^2 +(n-4)\,\|T\|^2
 +2(n-1)\,\Div H \\
\nonumber
 &&\hskip32mm  = n\,\Sm^*_{\,\rm mix}(M,g)\quad ({\rm for}~\widetilde{\mathcal D}{\rm-conformal\ variations}).
\end{eqnarray}
Since \eqref{E-main-0conf-dual} is dual to \eqref{E-main-0conf}, we shall show \eqref{E-main-0conf} only.
For $S=s\,g^\perp$ with $s\in C^\infty(M)$ we have $\tr_{g} S=ps$.
Applying \eqref{E-h2T2-conf1} of Lemma~\ref{L-H2h2-D} to \eqref{eq-ran-ex}, similarly to \eqref{E-Sc-var1a},
we obtain
\begin{equation}\label{E-Sc-var1b}
 \int_{M}\dt Q(g_t)_{\,|\,t=0}\,{\rm d}\vol_g = \int_{M} s\,\big( (p-1)\Div\tilde H -\Sm_{\,\rm ex}
 -2\,\|\tilde T\|^2_{g} +\|T\|^2_{g} \big)\,{\rm d}\vol_g.
\end{equation}
By Proposition~\ref{P-1-5} and \eqref{eq-ran-ex}, we obtain
\begin{eqnarray}\label{E-Jmix-dt-fin2}
\nonumber
 {\rm\frac{d}{dt}}\big\{J_{\,\rm mix} (\bar{g}_t)\big\}_{t=0}
 \eq \int_{M} s\Big( (p-1)\Div\tilde H -\Sm_{\,\rm ex} -2\,\|\tilde T\|^2_{g} +\|T\|^2_{g} \\
 \plus \frac p2\,\big( \Sm_{\,\rm ex} +\widetilde\Sm_{\,\rm ex} +\|T\|^2_{g} + \|\tilde T\|^2_{g}
 -\Sm^*_{\,\rm mix}(M,g) \big)\Big) \,{\rm d}\vol_g.
\end{eqnarray}
If $g$ is a critical point of the functional $J_{\,\rm mix}$ with respect to biconformal variations, then
the integral in \eqref{E-Jmix-dt-fin2} is zero for arbitrary $s\in C^\infty(M)$, and \eqref{E-main-0conf} holds.
\end{rem}

\section{Applications to foliated manifolds}
\label{sec:main}

In this section we assume that $(M, g)$ is a semi-Riemannian manifold endowed with a $n$-dimensi\-onal foliation $\calf$ (i.e., $T=0$) such that $T{\mathcal F}_x$ is nondegenerate in $(T_x M , \, g_x)$ for every $x \in M$.
Then the normal distribution ${\mathcal D}$ is also nondegenerate
in $(T_x M,\, g_x)$. We have $\Phi_T=0$ and $\Psi(X,Y)=\tr_{g}(A_Y A_X)$.
Since $\widetilde{\mathcal D}=T\calf$, we write ${r}_{\,\calf}={r}_{\,\widetilde{\mathcal D}}$, see \eqref{E-Rictop},
and equations \eqref{E-genRicN-dual} read
\begin{eqnarray}\label{E-genRicN-2}
 {r}_{\,\calf} = \Div h +\<H, h\> -{\mathcal A}^\flat -\widetilde\Psi +{\rm Def}_{\calf}\,\tilde H\,,\quad
 d_\calf\,\tilde H=0\,.
\end{eqnarray}

\begin{example}\rm
Let $\calf$ be a one-dimensional foliation (i.e., $n=1$) on $(M^{p+1},g)$
spanned by a unit vector field $N$, then $\Sm_{\,\rm mix}=\Ric_g(N,N)$, and \eqref{E-Jmix} reduces to~\eqref{E-Jmix-N}. We have
\[
 r_{\calf}=\Ric_g(N,N)\,\tilde g,\qquad
 r_{\,\mathcal D}=(R_N)^\flat,
\]
where $R_N\!=R^\nabla(N,\,\cdot\,)N$ is the \textit{Jacobi operator}.
Let $\tilde h$ be the scalar second fundamental form of~${\mathcal D}$.
Define the functions $\tilde\tau_i=\tr(\tilde A_N^{\,i})\ (i\ge0)$.
 It is easy to see that $\widetilde{\Sm}_{\,\rm ex}=\tilde\tau_1^2 -\tilde\tau_2$ and
\begin{eqnarray*}
 \Div N \eq \sum\nolimits_{\,i} g(\nabla_i N, {\mathcal E}_i)=-g(N,\sum\nolimits_{\,i}\!\nabla_i\,{\mathcal E}_i)=-\tilde\tau_1,\\
 \Div(\tilde\tau_1 N) \eq N(\tilde\tau_1) +\tilde\tau_1\Div N=N(\tilde\tau_1)-\tilde\tau_1^2.
\end{eqnarray*}
 Notice that $(H^\flat\otimes H^\flat)(X,Y)=g(H,X)\,g(H,Y)$.
 One may calculate
\begin{eqnarray*}
 \widetilde{\mathcal A} \eq \tilde A_N^2,\quad
  g(\tilde h N,\tilde H) = \tilde\tau_1\tilde h,\quad \Psi = H^\flat\otimes H^\flat,\quad
  \widetilde\Psi = (\tilde\tau_2 - \|\tilde T\|^2)\,\tilde g,\\
 {\mathcal A}^\flat \eq \|H\|^2\tilde g,\quad
 {\mathcal T}  = 0,\quad
 g(h,H) = \|H\|^2\tilde g ,\\
 H\eq\nabla_N\,N,\quad h=H\,\tilde g,\quad \|h\|=\|H\|, \\
 \tilde H\eq\tilde\tau_1 N,\quad \tilde\tau_1=\tr_{g}\tilde h,\quad
 \|\tilde h\|^2=\tilde\tau_2,\quad
  {\rm Def}_{\calf}\,\tilde H = N(\tilde\tau_1)\,\tilde g\,.
\end{eqnarray*}
We find $\Div(\tilde h\,N) =\nabla_N\,\tilde h -\tilde \tau_1\tilde h$ and $\Div h =(\Div H)\,\tilde g$.
 Then \eqref{E-genRicN}$_1$ and \eqref{eq-ran-ex} read
\begin{eqnarray}
\nonumber
 \big(R_N + \tilde A_N^2+(\tilde T^\sharp_N)^2\big)^\flat \eq \nabla_N\,\tilde h -H^\flat\otimes H^\flat
 +{\rm Def}_{\mathcal D}\,H,\\
\label{E-RicNs1aa}
 \Ric_g(N,N)\eq \Div(\tilde\tau_1 N + H) +\tilde\tau_1^2 -\tilde\tau_2 +\|\tilde T\|^2.
\end{eqnarray}
 If $M$ is a closed manifold, then by \eqref{E-RicNs1aa} and the Divergence Theorem, we represent \eqref{E-Jmix-N}~as
\begin{equation}\label{E-Jmix3-flow}
 J_{\,\rm mix}(g) =\int_{\,M}(\tilde\tau_1^2 -\tilde\tau_2 +\|\tilde T\|^2)\,{\rm d}\vol_g\,.
\end{equation}
Note that, by \eqref{E-divP}, $\Div(F\,N) = \nabla_N\,F - \tilde\tau_1 F$ for $(0,2)$-tensors $F$ defined on $M$.
\end{example}

\subsection{Euler-Lagrange equations for $J_{\,\rm mix}$ and critical adapted metrics}
\label{subsec:EL-fol}

For a foliation $\calf$ with $T\calf=\widetilde{\mathcal D}$, definitions \eqref{E-S-star} have the view
\begin{equation}\label{E-S*main}
 \Sm^*_{\,\rm mix}(M,g) =\left\{\begin{array}{cc}
  \Sm_{\,\rm mix}(M,g) -\frac2p\,\big(\Sm_{\,\rm ex} +2\,\|\tilde T\|^2\big)(M,g) &
  {\rm for}~{\mathcal D}{\rm-variations},\\
  \Sm_{\,\rm mix}(M,g)
 -\frac2n\,\big(\widetilde\Sm_{\,\rm ex} -\|\tilde T\|^2\big)(M,g) &
  {\rm for}~{T\calf}{\rm-variations}\,.
  \end{array}\right.
\end{equation}

Using Proposition~\ref{T-main00}, we obtain the following.

\begin{thm}\label{T-main01}
Let $\calf$ be a foliation with a transversal distribution ${\mathcal D}$  on a closed manifold~$M$.
If $g\in{\rm Riem}(M,\,{T\calf},\,{\mathcal D})$ is a critical point of $J_{\rm mix}$
with respect~to variations~\eqref{E-Sdtg}--\,\eqref{E-Sdtg-2}, then the following Euler-Lagrange equations
are satisfied:
\begin{eqnarray}\label{E-main-i-hF}
 \Div(\tilde h {-}\tilde H\,g^\perp) \!-2\widetilde{\mathcal T}^\flat \!+\Phi_h \!+\tilde{\cal K}^\flat
  = \frac12\big(\Sm_{\,\rm ex} \!+\widetilde\Sm_{\,\rm ex} \!+\|\tilde T\|^2 \!-\Sm^*_{\,\rm mix}(M,g)\big)\,g^\perp
 \ ({\rm for}~{\mathcal D}{\rm-variations}), \\
 \label{E-main-ii-hF}
 \Div(h -H\,\tilde g) +\Phi_{\tilde h} +\Phi_{\tilde T}
  = \frac12\big(\Sm_{\,\rm ex} +\widetilde\Sm_{\,\rm ex}+\|\tilde T\|^2 -\Sm^*_{\,\rm mix}(M,g)\big)\,\tilde g
 \quad({\rm for}~{T\calf}{\rm-variations}).
\end{eqnarray}
Using the partial Ricci tensor, we rewrite the Euler-Lagrange equations as
\begin{eqnarray}\label{E-main-i}
\nonumber
 &&\hskip-7mm {r}_{\,{\mathcal D}} -\<\tilde h,\tilde H\> +\widetilde{\mathcal A}^\flat
 -\widetilde{\mathcal T}^\flat +\Phi_h +\Psi -{\rm Def}_{\mathcal D}\,H +\tilde{\cal K}^\flat\\
 && = \frac12\,\big(\Sm_{\,\rm mix} -\Sm^*_{\,\rm mix}(M,g) +\Div(\tilde H - H) \big)\,g^\perp
 \quad({\rm for}~{\mathcal D}{\rm-variations}), \\
 \label{E-main-ii}
 \nonumber
 &&\hskip-7mm {r}_{\,\calf} -\<h,H\> +{\mathcal A}^\flat
 +\Phi_{\tilde h} +\Phi_{\tilde T} +\widetilde\Psi -{\rm Def}_{\calf}\,\tilde H \\
 && =\frac12\,\big( \Sm_{\,\rm mix} -\Sm^*_{\,\rm mix}(M,g) +\Div(H-\tilde H) \big)\,\tilde g
 \quad({\rm for}~{T\calf}{\rm-variations}).
\end{eqnarray}
\end{thm}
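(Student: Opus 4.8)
The plan is to derive Theorem~\ref{T-main01} as a direct specialization of Proposition~\ref{T-main00} to the integrable case $\widetilde{\mathcal D} = T\calf$. The defining feature of the foliated setting is that the integrability tensor $T$ of $\widetilde{\mathcal D}$ vanishes identically: for $X, Y$ tangent to the leaves one has $[X,Y]^\perp = 0$, hence $T(X,Y) = (1/2)[X,Y]^\perp = 0$. The whole argument therefore amounts to substituting $T = 0$ into the four Euler--Lagrange equations \eqref{E-main-0ih}, \eqref{E-main-0i-dualh}, \eqref{E-main-0i} and \eqref{E-main-0i-dual} already established for almost-product structures, the hypotheses on $M$ (closed) and on $g$ (critical) being identical in the two statements.

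First I would catalogue the tensors that collapse when $T = 0$. By \eqref{E-PhiT} the tensor $\Phi_T$ vanishes, so $\|T\|^2 = 0$ as well; since the dual operator $T^\sharp$ is then zero, both the Casorati-type operator $\mathcal{T} = \sum_a (T^\sharp_a)^2$ and the commutator tensor $\mathcal{K} = \sum_i [T^\sharp_i, A_i]$ vanish, giving $\mathcal{T}^\flat = 0$ and $\mathcal{K}^\flat = 0$. By \eqref{E-Psi} the tensor $\Psi$ then reduces to $\Psi(X,Y) = \tr_g(A_Y A_X)$, exactly as recorded at the opening of this section. By contrast, none of the tilded quantities $\tilde T$, $\widetilde{\mathcal T}$, $\tilde{\cal K}$, $\Phi_{\tilde T}$ and $\|\tilde T\|^2$ is affected, because the transversal distribution $\mathcal D$ need not be integrable.

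Next I would substitute these vanishings term by term. Deleting $\Phi_T$ and $\|T\|^2$ in the extrinsic-geometry equation \eqref{E-main-0ih} yields \eqref{E-main-i-hF}, while in its dual \eqref{E-main-0i-dualh} the additional cancellations $\mathcal{T}^\flat = 0$ and $\mathcal{K}^\flat = 0$ produce \eqref{E-main-ii-hF}. The same bookkeeping applied to the partial-Ricci forms \eqref{E-main-0i} and \eqref{E-main-0i-dual}, together with the notational identification $r_{\widetilde{\mathcal D}} = r_\calf$ used throughout this section, gives \eqref{E-main-i} and \eqref{E-main-ii}; in parallel, setting $T = 0$ in \eqref{E-S-star} turns $\Sm^*_{\,\rm mix}(M,g)$ into its foliated form \eqref{E-S*main}. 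The argument is essentially mechanical, and the only point demanding care --- the sole place a careless substitution could go astray --- is to keep the leaf-side integrability data $T$ strictly separate from the transversal integrability data $\tilde T$: only $T$ is killed by the foliation hypothesis, while the tilded tensors survive in full.
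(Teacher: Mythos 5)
Your proposal is correct and coincides with the paper's own argument: the paper proves Theorem~\ref{T-main01} precisely by specializing Proposition~\ref{T-main00} to the integrable case $T=0$, using the same reductions you list ($\Phi_T=0$, $\|T\|^2=0$, ${\mathcal T}=0$, ${\cal K}=0$, $\Psi(X,Y)=\tr_g(A_YA_X)$, $r_{\,\widetilde{\mathcal D}}=r_{\,\calf}$, and \eqref{E-S-star} becoming \eqref{E-S*main}), while the tilded tensors $\tilde T$, $\widetilde{\mathcal T}$, $\tilde{\cal K}$, $\Phi_{\tilde T}$ survive since ${\mathcal D}$ need not be integrable. No gap.
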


Certainly, these mixed field equations admit amount of solutions
(e.g., twisted products, see Sect.~\ref{subsec:dtwisted})
which might serve as models in theoretical physics, see discussion in \cite{bdrs}.

\begin{cor}\label{C-main02}
Let $\calf$ be a foliation spanned by a nonzero vector field $N$
with a transversal distribution ${\mathcal D}$ on a closed manifold~$M$.
If $g\in{\rm Riem}(M,\,{T\calf},\,{\mathcal D})$ is a~critical point of the functional \eqref{E-Jmix-N}
with respect to variations \eqref{E-Sdtg}, then the following  Euler-Lagrange equations~hold:
\begin{eqnarray}\label{E-main-1h}
\nonumber
 &&\hskip-11mm
 \Div((\tilde h -\tilde\tau_1\,g^\perp)N)
 -2\big(\tilde T^{\sharp\,2}_N\big)^\flat
 +[\tilde T_N^\sharp,\tilde A_N]^\flat \\
 && \ \, =\frac12\,(\tilde\tau_1^2 \!-\tilde\tau_2 +\|\tilde T\|^2 \!-\Sm^*_{\,\rm mix}(M,g))\,g^\perp
  \ ({\rm for}~{\mathcal D}{\rm-variations}),\\
\label{E-main-2h}
 &&\hskip-11mm \tilde\tau_1^2 - \tilde\tau_2 = 3\,\|\tilde T\|^2 -\Sm^*_{\,\rm mix}(M,g)
  \quad ({\rm for}~{T\calf}{\rm-variations}).
\end{eqnarray}
One may rewrite the equations using $R_N$ and $\Ric_g(N,N)$, as
\begin{eqnarray}\label{E-main-1i}
\nonumber
 &&\big( R_N +\tilde A_N^2 -\tilde T^{\sharp\,2}_N \big)^\flat \!-\tilde\tau_1\tilde h
 +H^\flat\otimes H^\flat -{\rm Def}_{\mathcal D}\,H  \\
 &&\hskip4mm =\frac12\,\big(\Ric_g(N,N) -\Sm^*_{\,\rm mix}(M,g)
 +\Div(\tilde\tau_1 N - H)\big)\,g^\perp
 \ \ ({\rm for}~{\mathcal D}{\rm-variations}),\\
\label{E-main-2i}
 &&\Ric_g(N,N) = -\Sm^*_{\,\rm mix}(M,g) +4\,\|\tilde T\|^2 +\Div(\tilde\tau_1 N + H)
 \ \ ({\rm for}~{T\calf}{\rm-variations}).
\end{eqnarray}
\end{cor}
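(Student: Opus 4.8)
The plan is to read Corollary~\ref{C-main02} as the specialization of Theorem~\ref{T-main01} to a one-dimensional foliation $\calf$, that is $n=1$ (and $T=0$), spanned by a vector field $N$ which I normalize to be unit as in the preceding one-dimensional Example. Every tensor entering the four Euler--Lagrange equations of Theorem~\ref{T-main01} has already been evaluated for this case in that Example, so the task is to substitute those values and collect terms. I would first record the $n=1$ vanishings: $T=0$ gives $\Phi_T=0$ and $\Psi(X,Y)=\tr_g(A_Y A_X)$; the tensor $\Phi_h$ vanishes identically for $n=1$ (the line after \eqref{E-Phi}); and $h=H\,\tilde g$ forces $\|h\|^2=\|H\|^2$, hence $\Sm_{\,\rm ex}=0$ and $\Div(h-H\,\tilde g)=0$.

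For the extrinsic forms I would treat the two variation types separately. Equation \eqref{E-main-1h} is \eqref{E-main-i-hF} after inserting $\Phi_h=0$, $\Sm_{\,\rm ex}=0$, $\widetilde{\mathcal T}^\flat=(\tilde T^{\sharp\,2}_N)^\flat$, $\tilde{\cal K}^\flat=[\tilde T^\sharp_N,\tilde A_N]^\flat$, and $\tilde H=\tilde\tau_1 N$. Equation \eqref{E-main-2h} comes from the $\tilde g$-proportional identity \eqref{E-main-ii-hF} by taking its $g$-trace: with $\tr_g\tilde g=1$, $\Div(h-H\,\tilde g)=0$, $\tr_g\Phi_{\tilde h}=\widetilde\Sm_{\,\rm ex}=\tilde\tau_1^2-\tilde\tau_2$ and $\tr_g\Phi_{\tilde T}=-\|\tilde T\|^2$, the identity collapses to $(\tilde\tau_1^2-\tilde\tau_2)-\|\tilde T\|^2=\frac12\big((\tilde\tau_1^2-\tilde\tau_2)+\|\tilde T\|^2-\Sm^*_{\,\rm mix}(M,g)\big)$, which rearranges to \eqref{E-main-2h}.

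For the partial-Ricci forms I would feed the Example's values $r_{\,\mathcal D}=(R_N)^\flat$, $r_{\,\calf}=\Ric_g(N,N)\,\tilde g$, $\Sm_{\,\rm mix}=\Ric_g(N,N)$, $\widetilde{\mathcal A}=\tilde A_N^2$, $\Psi=H^\flat\otimes H^\flat$, $\widetilde\Psi=(\tilde\tau_2-\|\tilde T\|^2)\,\tilde g$, ${\mathcal A}^\flat=\|H\|^2\tilde g$, $\langle h,H\rangle=\|H\|^2\tilde g$, $\langle\tilde h,\tilde H\rangle=\tilde\tau_1\tilde h$ and ${\rm Def}_{\calf}\,\tilde H=N(\tilde\tau_1)\,\tilde g$ into \eqref{E-main-i} and \eqref{E-main-ii}; merging $(R_N)^\flat+(\tilde A_N^2)^\flat-(\tilde T^{\sharp\,2}_N)^\flat$ into $(R_N+\tilde A_N^2-\tilde T^{\sharp\,2}_N)^\flat$ gives the left side of \eqref{E-main-1i}. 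For the scalar relation \eqref{E-main-2i} I would extract the $\tilde g$-coefficient of \eqref{E-main-ii} and use $\Div\tilde H=\Div(\tilde\tau_1 N)=N(\tilde\tau_1)-\tilde\tau_1^2$ to reassemble $\Div(\tilde\tau_1 N+H)$; consistency is confirmed by checking that the outcome agrees with \eqref{E-RicNs1aa}.

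The whole argument is bookkeeping, and the delicate point is the manipulation of the $\widetilde{\mathcal D}$-valued divergences when passing between the extrinsic and the Ricci presentations. There one rewrites $\Div\big((\tilde h-\tilde\tau_1 g^\perp)N\big)$ through $\Div(F\,N)=\nabla_N F-\tilde\tau_1 F$ (end of the Example), then eliminates $\nabla_N\tilde h$ by the specialized Riccati relation $\nabla_N\tilde h=(R_N+\tilde A_N^2+(\tilde T^\sharp_N)^2)^\flat+H^\flat\otimes H^\flat-{\rm Def}_{\mathcal D}\,H$ (this is \eqref{E-genRicN}$_1$ for $n=1$), using $\nabla_N g^\perp=0$ so that $\Div(\tilde\tau_1 g^\perp N)=N(\tilde\tau_1)\,g^\perp$. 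I expect the main obstacle to be tracking the self-adjoint commutator $\tilde{\cal K}=[\tilde T^\sharp_N,\tilde A_N]$, which in general does not vanish unless ${\mathcal D}$ is integrable or totally umbilical, and whose placement must be kept consistent across the two equivalent forms \eqref{E-main-1h} and \eqref{E-main-1i}.
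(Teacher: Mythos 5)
Your proposal is correct and follows essentially the same route as the paper: the paper's own proof specializes Theorem~\ref{T-main01} to $n=1$ by substituting the values computed in the one-dimensional Example ($\Phi_h=0=\Sm_{\,\rm ex}$, $\widetilde\Sm_{\,\rm ex}=\tilde\tau_1^2-\tilde\tau_2$, $\widetilde{\mathcal T}=\tilde T^{\sharp\,2}_N$, $h=H\,\tilde g$, $\Phi_{\tilde h}=(\tilde\tau_1^2-\tilde\tau_2)\,\tilde g$, $\Phi_{\tilde T}=-\|\tilde T\|^2\,\tilde g$), exactly as you do. Your trace argument for \eqref{E-main-2h} is equivalent to the paper's substitution, since for $n=1$ every $\widetilde{\mathcal D}$-truncated symmetric tensor is a multiple of $\tilde g$; and your direct substitution of the Example's values into \eqref{E-main-i}--\eqref{E-main-ii} is the same computation as the paper's replacement of terms in \eqref{E-main-1h}--\eqref{E-main-2h} via \eqref{E-RicNs1aa}, because \eqref{E-main-i}--\eqref{E-main-ii} were themselves obtained from \eqref{E-main-i-hF}--\eqref{E-main-ii-hF} through \eqref{E-genRicN}.

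Two remarks are in order. First, the ``obstacle'' you flag at the end is genuine, and it points at the paper rather than at your method: carrying out either route, the commutator $\tilde{\cal K}^\flat=[\tilde T^\sharp_N,\tilde A_N]^\flat$, which appears in \eqref{E-main-i} and in \eqref{E-main-1h}, survives unchanged into the partial-Ricci form, so the equation you will actually derive is \eqref{E-main-1i} with the additional summand $[\tilde T^\sharp_N,\tilde A_N]^\flat$ on the left-hand side; the displayed \eqref{E-main-1i} omits it, and since this operator is self-adjoint and vanishes only under extra hypotheses (e.g.\ ${\mathcal D}$ integrable or totally umbilical), the omission cannot be explained away --- it is an inconsistency between \eqref{E-main-1h} and \eqref{E-main-1i} as printed. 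Second, a small computational slip in your last paragraph: by the rule $\Div(F\,N)=\nabla_N F-\tilde\tau_1 F$ one gets
$\Div(\tilde\tau_1\,g^\perp N)=N(\tilde\tau_1)\,g^\perp-\tilde\tau_1^2\,g^\perp$,
not $N(\tilde\tau_1)\,g^\perp$; you conflated $\nabla_N(\tilde\tau_1 g^\perp)$ with the full divergence. The missing $-\tilde\tau_1^2\,g^\perp$ is precisely what reassembles, via $\Div(\tilde\tau_1 N)=N(\tilde\tau_1)-\tilde\tau_1^2$, the term $\Div(\tilde\tau_1 N-H)$ on the right-hand side of \eqref{E-main-1i}, so the bookkeeping only closes once that term is restored. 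Neither point changes the verdict: your derivations of \eqref{E-main-1h}, \eqref{E-main-2h} and \eqref{E-main-2i} match the paper's, and your version of \eqref{E-main-1i} is the corrected form of the paper's equation.
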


\begin{proof}
Substituting the values
 $\Phi_h=0=\Sm_{\,\rm ex}$, $\widetilde{\Sm}_{\,\rm ex}=\tilde\tau_1^2 -\tilde\tau_2$ and
 $\widetilde{\mathcal T} = \tilde T^{\sharp\,2}_N$
into \eqref{E-main-i-hF} yields \eqref{E-main-1h}.
Substituting the values $h =H\,\tilde g$,
 $\Phi_{\tilde h} = (\tilde\tau_1^2-\tilde\tau_2)\,\tilde g$
 and $\Phi_{\tilde T}  = -\|\tilde T\|^2 \tilde g$
into \eqref{E-main-ii-hF} yields \eqref{E-main-2h}.
Then, replacing terms in \eqref{E-main-1h} and \eqref{E-main-2h} due to \eqref{E-RicNs1aa},
we obtain \eqref{E-main-1i} and \eqref{E-main-2i}.
\end{proof}

\begin{example}\rm
 (i) Let $\mathcal F$ be a totally umbilical foliation (i.e., $h=\frac1n\,H\tilde g$, $T=0$). Thus,
\[
 \Phi_h=\frac{n-1}n\,{H}^\flat\otimes{H}^\flat,\quad
 \mathcal{A}^\flat = \frac{1}{n^2}\,\|H\|^2\,\tilde g,\quad
 \Psi=\frac1n\,{H}^\flat\otimes {H}^\flat,\quad
 \Sm_{\,\rm ex} = \frac{n-1}{n}\,\|H\|^2
\]
hold, and the fundamental equations \eqref{E-genRicN}$_1$ and \eqref{E-genRicN-2}$_1$ read
\begin{eqnarray}\label{E-umbRD-b}
 r_{\,\mathcal D} -\Div\tilde h -\<\tilde H, \tilde h\>
 +(\widetilde{\mathcal A} -\widetilde{\mathcal T})^\flat
 +\frac1n\,H^\flat\otimes H^\flat -{\rm Def}_{\mathcal D}\,H = 0,\\
\label{E-umbRD}
 {r}_{\,\calf} -\frac1n\,\big(\Div H +\frac{n-1}n\,\|H\|^2\big)\,\tilde g
 +\widetilde\Psi -{\rm Def}_{\calf}\,\tilde H = 0.
\end{eqnarray}

(ii) Let $\mathcal F$ be a totally geodesic foliation (i.e., $h=0=T$) of a closed Riemannian manifold $(M,g)$.
Thus, $\Sm_{\,\rm ex}=0$, and \eqref{E-genRicN}$_1$ and \eqref{E-genRicN-2}$_1$ read
\begin{equation*}
 r_{\,\mathcal D}
 -\Div\tilde h -\<\tilde H, \tilde h\> +(\widetilde{\mathcal A}+\widetilde{\mathcal T})^\flat = 0,\quad
 {r}_{\,\calf} - {\rm Def}_{\calf}\,\tilde H +\widetilde\Psi = 0.
\end{equation*}
If $g\in{\rm Riem}(M,\,{T\calf},\,{\mathcal D})$ is a critical point of $J_{\rm mix}$ with respect to $\mathcal D$-variations, then, see Euler-Lagrange equations \eqref{E-main-i-hF} and \eqref{E-main-i},
\begin{eqnarray}\label{E-main-i-hFtg}
 &&\hskip-7mm
 \Div(\tilde h -\tilde H\,g^\perp) -2\,\widetilde{\mathcal T}^\flat +\tilde{\cal K}^\flat
  = \frac12\,\big(\widetilde\Sm_{\,\rm ex}+\|\tilde T\|^2 -\Sm^*_{\,\rm mix}(M,g)\big)\,g^\perp,\\
\label{E-main-i-tg}
 &&\hskip-7mm {r}_{\,{\mathcal D}}-\<\tilde h,\tilde H\>+\widetilde{\mathcal A}^\flat-\widetilde{\mathcal T}^\flat
 +\tilde{\cal K}^\flat
 =\frac12\,\big(\Sm_{\,\rm mix} -\Sm^*_{\,\rm mix}(M,g) + \Div\tilde H \big)\,g^\perp.
\end{eqnarray}

(iii) Let $\mathcal F$ be a Riemannian foliation (i.e., $\tilde h=0=T$) of a closed manifold $(M,g)$. Note that $\widetilde\Sm_{\,\rm ex}=0$. If~$g\in{\rm Riem}(M,\,{T\calf},\,{\mathcal D})$ is a critical point of $J_{\rm mix}$ with respect to $T\calf$-variations, then, see Euler-Lagrange equations \eqref{E-main-ii-hF} and \eqref{E-main-ii},
\begin{eqnarray}\label{E-main-ii-hFth}
 \Div(h -H\,\tilde g) +\Phi_{\tilde T} \eq \frac12\,\big(\Sm_{\,\rm ex} +\|\tilde T\|^2 -\Sm^*_{\,\rm mix}(M,g)\big)\,\tilde g,\\
\label{E-main-ii-Riem}
 {r}_{\,\calf} +\Phi_{\tilde T} +\widetilde\Psi
 \eq\frac12\,\big(\Sm_{\,\rm mix} -\Sm^*_{\,\rm mix}(M,g) +\Div H \big)\,\tilde g.
\end{eqnarray}
\end{example}

Next we consider sufficient conditions for critical metrics of the action \eqref{E-Jmix}.

\begin{cor}\label{C-03} Let $\calf$ be a foliation  on a closed Riemannian manifold~$(M,g)$ with integrable normal distribution~${\mathcal D}$. Then
\begin{eqnarray}\label{E-main-i-hF-0}
 \Div(\tilde h -\tilde H\,g^\perp) +\Phi_h
 = \frac12\big(\Sm_{\,\rm ex} +\widetilde\Sm_{\,\rm ex} -\Sm^*_{\,\rm mix}(M,g)\big)\,g^\perp
 \quad ({\rm for}~{\mathcal D}{\rm-variations}), \\
 \label{E-main-ii-hF-0}
 \Div(h -H\,\tilde g) +\Phi_{\tilde h}
 = \frac12\big(\Sm_{\,\rm ex} +\widetilde\Sm_{\,\rm ex} -\Sm^*_{\,\rm mix}(M,g)\big)\,\tilde g
 \quad({\rm for}~{T\calf}{\rm-variations}),
\end{eqnarray}
where, see \eqref{E-S*main},
\begin{equation}\label{E-S*main-inegrable}
 \Sm^*_{\,\rm mix}(M,g) =\left\{\begin{array}{cc}
 \Sm_{\,\rm mix}(M,g) -\frac2p\,\Sm_{\,\rm ex}(M,g) & {\rm for}~{\mathcal D}{\rm-variations},\\
 \Sm_{\,\rm mix}(M,g) -\frac2n\,\widetilde\Sm_{\,\rm ex}(M,g) & {\rm for}~{T\calf}{\rm-variations}\,.
\end{array}\right.
\end{equation}

{\rm (i)} If $g\in{\rm Riem}(M,\,{T\calf},\,{\mathcal D})$ is a critical point of $J_{\,\rm mix}$ with respect to ${\mathcal D}$-variations
and $\calf$ is totally geodesic then $\Div(\tilde h -\frac1p\,\tilde H\,g^\perp)=0$.

{\rm (ii)} If $g\in{\rm Riem}(M,\,{T\calf},\,{\mathcal D})$ is a critical point of $J_{\,\rm mix}$ with respect to $T\calf$-variations
and $\calf$ is totally umbilical then $\Phi_{\tilde h}=\frac1n\,\widetilde\Sm_{\,\rm ex}\,\tilde g$.
\end{cor}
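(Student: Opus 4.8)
The plan is to obtain everything by specializing Theorem~\ref{T-main01} to the extra hypothesis that $\mathcal D$ is integrable, and then to extract parts (i) and (ii) by a trace argument. Integrability of $\mathcal D$ means $\tilde T=0$; consequently $\widetilde{\mathcal T}$ and $\tilde{\cal K}$, both built from $\tilde T^\sharp$, vanish, as do $\Phi_{\tilde T}$ and $\|\tilde T\|^2$. Substituting these vanishings into \eqref{E-main-i-hF} and \eqref{E-main-ii-hF} gives \eqref{E-main-i-hF-0} and \eqref{E-main-ii-hF-0} verbatim, while putting $\|\tilde T\|^2=0$ into \eqref{E-S*main} yields \eqref{E-S*main-inegrable}. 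So the displayed equations require no real work; they are a direct specialization.

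Before handling (i)--(ii) I would isolate one computational fact: $\Div(\tilde H\,g^\perp)=(\Div\tilde H)\,g^\perp$ and, dually, $\Div(H\,\tilde g)=(\Div H)\,\tilde g$, as truncated $(0,2)$-tensors. This rests on $(\nabla_Z g^\perp)(X,Y)=0$ for all $Z$ and all $X,Y\in\mathfrak X_{\mathcal D}$ (and the dual statement for $\tilde g$ on $\mathfrak X_{\widetilde{\mathcal D}}$), which is a one-line consequence of $\nabla g=0$ together with $g^\perp|_{\mathcal D\times\mathcal D}=g|_{\mathcal D\times\mathcal D}$. Indeed, in $\Div(\tilde H\,g^\perp)$ the only term beyond $(\Div\tilde H)g^\perp$ carries a factor $g(\tilde H,e_k)$ that forces $e_k\in\widetilde{\mathcal D}$, where $\nabla g^\perp$ annihilates $\mathcal D$-vectors. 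This is the technical heart and the step most likely to need care, since $g^\perp$ is \emph{not} parallel on all of $TM$.

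For part (i), total geodesy gives $h=0$, hence $H=0$, $\Phi_h=0$ and $\Sm_{\,\rm ex}=0$, so that $\Sm^*_{\,\rm mix}(M,g)=\Sm_{\,\rm mix}(M,g)$ is a constant by \eqref{E-S*main-inegrable}. Absorbing $\Div(\tilde H\,g^\perp)=(\Div\tilde H)g^\perp$, equation \eqref{E-main-i-hF-0} becomes $\Div\tilde h=\big[\Div\tilde H+\tfrac12(\widetilde\Sm_{\,\rm ex}-\Sm_{\,\rm mix}(M,g))\big]g^\perp$. Taking the $g$-trace over $\mathcal D$ and using $\tr_g\Div\tilde h=\Div\tilde H$ (the identity behind Walczak's formula) together with $\tr_g g^\perp=p$ pins down the scalar: $\tfrac12(\widetilde\Sm_{\,\rm ex}-\Sm_{\,\rm mix}(M,g))=\tfrac{1-p}{p}\Div\tilde H$. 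Reinserting this collapses the equation to $\Div\tilde h=\tfrac1p(\Div\tilde H)g^\perp=\tfrac1p\Div(\tilde H\,g^\perp)$, i.e.\ $\Div(\tilde h-\tfrac1p\tilde H\,g^\perp)=0$.

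Part (ii) runs on the same ``trace fixes the conformal factor'' principle. Total umbilicity gives $h=\tfrac1n H\tilde g$, so $h-H\tilde g=-\tfrac{n-1}{n}H\tilde g$ and, by the divergence fact above, $\Div(h-H\tilde g)=-\tfrac{n-1}{n}(\Div H)\tilde g$ is a pointwise multiple of $\tilde g$. Since the right-hand side of \eqref{E-main-ii-hF-0} is likewise a multiple of $\tilde g$, the equation forces $\Phi_{\tilde h}=\lambda\,\tilde g$ for some function $\lambda$. I then take the $g$-trace and use $\tr_g\Phi_{\tilde h}=\widetilde\Sm_{\,\rm ex}$ and $\tr_g\tilde g=n$ to read off $\lambda=\tfrac1n\widetilde\Sm_{\,\rm ex}$, giving $\Phi_{\tilde h}=\tfrac1n\widetilde\Sm_{\,\rm ex}\,\tilde g$. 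The point is that I never need the explicit value of $\lambda$ from the equation --- only that $\Phi_{\tilde h}$ is proportional to $\tilde g$ --- so the trace closes the argument at once.
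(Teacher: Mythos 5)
Your proposal is correct and follows essentially the same route as the paper: specialize the Euler--Lagrange equations \eqref{E-main-i-hF}--\eqref{E-main-ii-hF} of Theorem~\ref{T-main01} to $\tilde T=0$ (killing $\widetilde{\mathcal T}$, $\tilde{\cal K}$, $\Phi_{\tilde T}$ and $\|\tilde T\|^2$), then settle (i) and (ii) by tracing over ${\mathcal D}$ (resp.\ $T\calf$) and reinserting. The only cosmetic differences are that in (ii) you deduce $\Phi_{\tilde h}=\lambda\,\tilde g$ from the proportionality of all other terms to $\tilde g$ and fix $\lambda$ via $\tr_{g}\Phi_{\tilde h}=\widetilde\Sm_{\,\rm ex}$ rather than computing the scalar explicitly as the paper does, and that you make explicit the identity $\Div(\tilde H\,g^\perp)=(\Div\tilde H)\,g^\perp$ (and its dual), which the paper's proof uses implicitly.
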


\begin{proof}
From \eqref{E-main-i-hF}--\,\eqref{E-main-ii-hF} with $\tilde T=0$ we obtain \eqref{E-main-i-hF-0}--\,\eqref{E-main-ii-hF-0}.
(i)~Tracing \eqref{E-main-i-hF-0} with $\Phi_h=0=\Sm_{\,\rm ex}$, we find
 $\widetilde\Sm_{\,\rm ex} -\Sm^*_{\,\rm mix}(M,g) = 2\,\frac{1-p}{p}\Div\tilde H $.
From this and \eqref{E-main-i-hF-0}  the claim follows.
(ii)~Similarly, tracing \eqref{E-main-ii-hF-0} with $h=\frac1n H\,\tilde g$, we find
 $\Sm_{\,\rm ex} +\widetilde\Sm_{\,\rm ex} -\Sm^*_{\,\rm mix}(M,g)
 =2\,\frac{1-n}{n}\Div H + \frac2n\,\widetilde\Sm_{\,\rm ex}$.
From this and \eqref{E-main-ii-hF-0}  the claim follows.
\end{proof}

\begin{rem}\rm
 By Corollary~\ref{C-03}, if
$\calf$ is a totally geodesic foliation on a closed Riemannian manifold~$(M,g)$ with integrable normal distribution ${\mathcal D}$, and $g\in{\rm Riem}(M,\,{T\calf},\,{\mathcal D})$ is critical for the action $J_{\,\rm mix}$ with respect to all adapted variations of $(M,g)$ then
\begin{equation}\label{E-scalJ-tgeo}
 \Div\Big(\tilde h-\frac1p\,\tilde H\,g^\perp\Big) = 0,\qquad
 \Phi_{\tilde h}=\frac1n\,\widetilde\Sm_{\,\rm ex}\,\tilde g\,.
\end{equation}
Obviously, \eqref{E-scalJ-tgeo}$_1$ is satisfied when $\mathcal D$ is totally umbilical,
i.e., $\tilde h=\frac1p\,\tilde H\,g^\perp$. On the other hand,
\eqref{E-scalJ-tgeo}$_2$ is not satisfied when $\mathcal D$ is totally umbilical (but not totally geodesic).

\noindent
\textbf{Problem}: \textit{Classify closed Riemannian manifolds admitting a totally geodesic foliation with integrable normal distribution satisfying $($any of\,$)$ conditions \eqref{E-scalJ-tgeo}}.
\end{rem}

\subsection{Double-twisted products of manifolds}\label{subsec:dtwisted}

Let $M = M_1\times M_2$ be a product of semi-Riemannian manifolds $(M_i\,,\,g_i)$ ($i\in\{1,2\}$).
Let $\pi_i : M\to M_i$ and $P_i = d\pi_i: TM\to TM_i$ be the canonical projections.
Given twisting functions $f_i\in C^\infty(M)$
a \textit{double-twisted product} $M_1\times_{(f_1,f_2)} M_2$ is $M_1 \times M_2$ with the metric
\begin{equation}\label{E-dtmetric}
 g =e^{f_1} \, \pi_1^\ast g_1 + e^{f_2} \, \pi_2^\ast g_2.
\end{equation}
If $f_1 = {\rm const}$ then we have a \textit{twisted product}
(a \textit{warped product} if, in addition, $f_2 = F \circ \pi_1$ for some $F \in C^\infty (M_1)$).
Both families, the \textit{leaves} $M_1\times\{y\}$ and the \textit{fibers} $\{x\}\times M_2$, are totally umbilical in $(M,g)$ and this property characterizes double-twisted products  (cf. R.\,Ponge \& H.\,Reckziegel,~\cite{pr}).
We~have $T=\tilde T=0$~and
\begin{eqnarray*}
 A_Y\eq -Y(f_1)\,\widetilde\id,\quad \ \ h=-(\nabla^\perp f_1)\,\tilde g,\quad H=-n\nabla^\perp f_1,\\
 \tilde A_X\eq -X(f_2)\id^{\!\perp} ,\quad
 \tilde h=-(\widetilde\nabla f_2)\,g^\perp,\quad
 \tilde H=-p\,\widetilde\nabla f_2,
\end{eqnarray*}
where $X\in\widetilde{\mathcal D}$ and $Y\in{\mathcal D}$ are unit vectors.
In this case, cf. \eqref{E-divN},
\begin{equation*}
 \Div\tilde H= -p\,\widetilde\Delta\,f_2 -p^2 \|\widetilde\nabla f_2\|^2,\quad
 \Div H= -n\,\Delta^\perp f_1 -n^2 \|\nabla^\perp f_1\|^2.
\end{equation*}
For any double-twisted product, we have identities, see \eqref{E-umbRD},
\begin{eqnarray}\label{E-dtp-gen}
 r_{\,{\mathcal D}} \eq \frac1p\,\Big(\Div\tilde H +\frac{p-1}{p}\,\|\tilde H\|^2\Big) g^\perp
 +{\rm Def}_{\mathcal D}\,H -\frac1n\,H^\flat\otimes H^\flat\,,\\
 \label{E-dtp-gen2}
 r_{\calf} \eq \frac1n\,\Big(\Div H +\frac{n-1}{n}\,\|H\|^2\Big) \tilde g
 +{\rm Def}_{\calf}\,\tilde H -\frac1p\,\tilde H^\flat\otimes\tilde H^\flat.
\end{eqnarray}
Hence, $\Sm_{\rm mix} =\Div(H+\tilde H)+\frac{n-1}{n}\,\|\tilde H\|^2 +\frac{p-1}{p}\,\|\tilde H\|^2$
and $J_{\,\rm mix}(g)\ge0$.

The leaves $M_1\times\{y\}$ of a twisted product are totally geodesic submanifolds on $M$ (i.e., $h=0$).
For a twisted product (i.e., $f_2=0$), we have identities, see \eqref{E-dtp-gen} and~\eqref{E-dtp-gen2},
\begin{equation*}
 r_{\,{\mathcal D}} = \frac1p\Big(\Div\tilde H +\frac{p-1}{p}\,\|\tilde H\|^2\Big)g^\perp,\qquad
  r_{\calf} = {\rm Def}_{\calf}\,\tilde H -\frac1p\,\tilde H^\flat\otimes\tilde H^\flat.
\end{equation*}

\begin{cor}\label{C-dtwist}
A double-twisted product metric \eqref{E-dtmetric} on a closed manifold $M=M_1^n\times M_2^p$ with $n,p>1$ is a critical point of the functional \eqref{E-Jmix} with respect to ${\mathcal D}$-variations
if and only if $f_1={\rm const}$ $($i.e., a twisted product$)$.
Similarly, in the case of $\,T\calf$-variations, we get $f_2={\rm const}$.
\end{cor}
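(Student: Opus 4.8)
The plan is to substitute the extrinsic data of a double-twisted product into the Euler--Lagrange equation for ${\mathcal D}$-variations and to read off the condition $H=0$ from its trace-free part. A metric \eqref{E-dtmetric} has $T=\tilde T=0$ with both $\widetilde{\mathcal D}$ and ${\mathcal D}$ totally umbilical; in particular ${\mathcal D}$ is integrable, so $\widetilde{\mathcal T}=\tilde{\mathcal K}=0$ and, by Theorem~\ref{T-main01} in the reduced form \eqref{E-main-i-hF-0} of Corollary~\ref{C-03}, the metric $g$ is critical with respect to ${\mathcal D}$-variations if and only if
\[
 \Div(\tilde h-\tilde H\,g^\perp)+\Phi_h
 =\tfrac12\big(\Sm_{\,\rm ex}+\widetilde\Sm_{\,\rm ex}-\Sm^*_{\,\rm mix}(M,g)\big)\,g^\perp .
\]
Into this I insert the formulas of \secref{subsec:dtwisted}, namely $H=-n\,\nabla^\perp f_1$, $\tilde h=-(\widetilde\nabla f_2)\,g^\perp$ and $\tilde H=-p\,\widetilde\nabla f_2$, together with the totally umbilical identities $\Phi_h=\frac{n-1}{n}\,H^\flat\otimes H^\flat$ and $\Sm_{\,\rm ex}=\frac{n-1}{n}\|H\|^2$.

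The decisive point is a separation by tensor type. Since $\tilde h-\tilde H\,g^\perp=(p-1)(\widetilde\nabla f_2)\,g^\perp$ and, by metric compatibility, $(\nabla_\xi g^\perp)(Y,Y')=(\nabla_\xi g)(Y,Y')=0$ for $Y,Y'\in{\mathcal D}$, computing the divergence of the $\widetilde{\mathcal D}$-valued tensor $(\widetilde\nabla f_2)\,g^\perp$ via \eqref{E-divP} yields $\Div\big((\widetilde\nabla f_2)\,g^\perp\big)=(\Div\widetilde\nabla f_2)\,g^\perp$, a pure multiple of $g^\perp$ depending only on $f_2$; the scalar $\widetilde\Sm_{\,\rm ex}$ is likewise $f_2$-dependent. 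Hence \emph{every} term of the displayed equation is proportional to $g^\perp$ except $\Phi_h=\frac{n-1}{n}H^\flat\otimes H^\flat$, the only $f_1$-dependent term. Projecting onto the trace-free part of $\mathrm{Sym}^2({\mathcal D})$ therefore forces $H^\flat\otimes H^\flat=\frac1p\|H\|^2\,g^\perp$; as $p=\operatorname{rank}g^\perp>1$, a tensor of rank at most one can be a multiple of $g^\perp$ only if it vanishes, so $H=0$. Because $H=-n\,\nabla^\perp f_1$, this says $\nabla^\perp f_1=0$, i.e. $f_1$ is constant along each fibre $\{x\}\times M_2$; absorbing $e^{f_1}$ into $g_1$ then exhibits \eqref{E-dtmetric} as a twisted product, $f_1={\rm const}$. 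Conversely, if $f_1={\rm const}$ then $H=0$, $\Phi_h=0$ and $\Sm_{\,\rm ex}=0$, the tensor equation collapses to its $g^\perp$-proportional part, and it remains to check the surviving scalar relation. The $T\calf$-variation statement is entirely dual, interchanging $(f_1,H,{\mathcal D})$ with $(f_2,\tilde H,\widetilde{\mathcal D})$: here $\Phi_{\tilde h}=\frac{p-1}{p}\tilde H^\flat\otimes\tilde H^\flat$ is the obstruction and the rank argument (now with $n>1$) gives $\tilde H=0$, i.e. $f_2={\rm const}$.

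I expect the main obstacle to lie in two places. The conceptual heart is the rank argument, which becomes transparent only after the equation has been rewritten as ``a multiple of $g^\perp$ plus $\frac{n-1}{n}H^\flat\otimes H^\flat$'', and which is exactly where the hypotheses $n,p>1$ enter. The routine but essential labour is confirming that $\Div(\tilde h-\tilde H\,g^\perp)$ is genuinely proportional to $g^\perp$ (resting on the divergence identity \eqref{E-divP} and on the vanishing of the integrability tensors of ${\mathcal D}$). The most delicate step is the converse: once $H=0$ the equation reduces to a single scalar (trace) identity involving $\Div\tilde H$, $\|\tilde H\|^2$ and the mean value $\Sm^*_{\,\rm mix}(M,g)$, and verifying that this residual relation is met by the twisted product is the point I would examine most carefully.
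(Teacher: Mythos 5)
Your ``only if'' argument is, step for step, the paper's own proof. The paper substitutes $\tilde h=\frac1p\,\tilde H\,g^\perp$ and $\Phi_h=\frac{n-1}{n}\,H^\flat\otimes H^\flat$ into \eqref{E-main-i-hF} (your reduced form \eqref{E-main-i-hF-0} is the same equation here, since $\tilde T=0$ kills $\widetilde{\mathcal T}$ and $\tilde{\cal K}$), arrives exactly at your displayed equation --- this is \eqref{E-dtw-1} in the paper --- and then applies the identical rank argument: $H^\flat\otimes H^\flat$ has rank at most one, a nonzero multiple of $g^\perp$ has rank $p>1$, hence $H=0$; the coefficient $\frac{n-1}{n}$ is where $n>1$ enters. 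Your two auxiliary checks (that $\Div(\tilde h-\tilde H\,g^\perp)$ is proportional to $g^\perp$ on ${\mathcal D}\times{\mathcal D}$, and that $H=0$ only yields $\nabla^\perp f_1=0$, after which $e^{f_1}$ is absorbed into $g_1$) are done tacitly in the paper; on this direction your write-up is correct and, if anything, more careful.

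The converse, which you leave open and flag as ``the point I would examine most carefully,'' is a genuine gap --- but it is the paper's gap as much as yours, and your suspicion is exactly right: the step cannot be completed as stated. The paper dismisses it with ``in this case \eqref{E-dtw-1} becomes identity,'' which is not true. With $H=0$, the trace of \eqref{E-dtw-1} still demands the pointwise relation
\[
 \Sm^*_{\,\rm mix}(M,g)\;=\;\frac{p-1}{p}\,\big(2\,\Div\tilde H+\|\tilde H\|^2\big)
 \;=\;\frac{p-1}{p}\,\big(2\,\widetilde{\Div}\,\tilde H-\|\tilde H\|^2\big),
\]
the second equality by \eqref{E-divN}. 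Integrate this over a single leaf $L=M_1\times\{y\}$, which is a closed manifold: since $\tilde H$ is tangent to $\calf$, the function $\widetilde{\Div}\,\tilde H$ coincides on $L$ with the divergence of $\tilde H|_L$ in the induced metric, so its integral over $L$ vanishes and
\[
 \Sm^*_{\,\rm mix}(M,g)\,{\rm Vol}(L)\;=\;-\frac{p-1}{p}\int_L\|\tilde H\|^2\,{\rm d}\,{\rm vol}_L\;\le\;0.
\]
On the other hand, $H=0$ gives $\Sm_{\,\rm ex}=0$, whence by \eqref{eq-ran-ex} and the Divergence Theorem on the closed manifold $M$ one has $\Sm^*_{\,\rm mix}(M,g)=\Sm_{\,\rm mix}(M,g)=\widetilde\Sm_{\,\rm ex}(M,g)\ge0$. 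Both sides must vanish, forcing $\tilde H\equiv0$. Thus a twisted product with $\widetilde\nabla f_2\not\equiv0$ is \emph{not} ${\mathcal D}$-critical: among double-twisted products on a closed manifold, the ${\mathcal D}$-critical metrics are precisely those with $H=0$ \emph{and} $\tilde H=0$, i.e. direct products of conformally rescaled factors (the dual fiberwise integration settles the $T\calf$-case the same way). In short, your forward implication and rank argument stand and agree with the paper; but the residual scalar relation you flagged is not a formality to be ``checked'' --- it is a further obstruction, so the ``if'' half of the Corollary, and the paper's one-line justification of it, are defective.
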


\begin{proof}
Let the metric $g$ be critical with respect to ${\mathcal D}$-variations.
Since $\tilde h=\frac1p\,\tilde H\,g^\perp$ and $\Phi_h=\frac{n-1}{n}\,H^\flat\otimes H^\flat$, by \eqref{E-main-i-hF},  we have
\begin{equation}\label{E-dtw-1}
 \frac{1-p}{p}\,(\Div\tilde H)\,g^\perp +\frac{n-1}{n}\,H^\flat\otimes H^\flat
  = \frac12\,\Big(\frac{n-1}{n}\,\|H\|^2 +\frac{p-1}{p}\,\|\tilde H\|^2 -\Sm^*_{\,\rm mix}(M,g)\Big)\,g^\perp.
\end{equation}
Since the symmetric tensor $H^\flat\otimes H^\flat$ has rank $\le 1$,
then for $p>1$, we obtain $H=0$; in this case, \eqref{E-dtw-1} becomes identity.
This corresponds to totally geodesic leaves, i.e., the twisted product.
 The converse claim is also true. The case of $\,T\calf$-variations is similar.
\end{proof}

By Corollary~\ref{C-dtwist}, among double-twisted products, the (biscaling of)
direct product metrics are only critical points of the functional $J_{\,\rm mix}$ with respect to all adapted variations.

\begin{example}[Foliations of the standard sphere omitting a codimension $2$ totally geodesic submanifold]
{\rm Let $S^m = \{ x \in {\mathbb R}^{m+1} : \sum_{i=1}^{m+1} x_i^2 = 1 \}$ be the unit sphere in ${\mathbb R}^{m+1}$ and $\Sigma \subset S^m$ a codimension $2$
totally geodesic submanifold. Then (by an appropriate choice of coordinates on ${\mathbb R}^{m+1}$),
$\Sigma =\{ x \in S^m : x_1 = x_2 = 0 \}$.
\textit{
Let $S^{m-1}_+$ be the hemisphere $\{ y = (y^\prime \,  , \, y_m ) \in S^{m-1} : y_m > 0 \}$,
where $y^\prime = (y_1, \cdots, y_{m-1})$, and ${\mathcal F}_\Sigma$ be the foliation of $S^m\setminus\Sigma$,
whose leaf space is
\[
 \left( S^m \setminus \Sigma \right) /{\mathcal F}_\Sigma = \left\{ L_\zeta : \zeta \in S^1 \right\} , \;\;\;
 L_\zeta \equiv \left\{ \left( y^\prime \, , \, {\rm Re}(\zeta ) \, y_m \, , \, {\rm Im}(\zeta ) \, y_m \right) : y \in S^{m-1}_+ \right\}\,.
\]
Let $\widetilde{\mathcal D} = T\calf_\Sigma$ and $\mathcal D$ be its $g_m$-orthogonal complement
with respect to the canonical Riemannian metric $g_m \in {\rm Riem}(S^m \setminus \Sigma )$.
Then $g_m$ is a $\mathcal D$-critical point of the functional~\eqref{E-Jmix}}.

To prove the claim, note that $S^m \setminus \Sigma$ is isometric to the warped product $S^{m-1}_+ \times_w S^1$ with the warping function $w(y) = y_m$.  Precisely let $g_N$ be the first fundamental form of $S^N$ in the Euclidean space ${\mathbb R}^{N+1}$.
Then the map $I : S^{m-1}_+ \times S^1 \to S^m \setminus \Sigma$
given by
\[
 I(y \, , \, \zeta ) = \left( y^\prime \, , \, {\rm Re}(\zeta ) \, y_m \, , \, {\rm Im}(\zeta ) \, y_m \right) , \;\;\; y \in S^{m-1}_+ \, , \;\;\; \zeta \in S^1 \, ,
\]
is an isometry of $S^{m-1}_+ \times S^1$ with the warped product metric $\pi_1^\ast g_{m-1} + (w \circ \pi_1 )^2 \, \pi_2^\ast g_1$ onto $(S^m \setminus \Sigma , \, g_m )$.
Here $\pi_1 : S^{m-1}_+ \times S^1 \to S^{m-1}_+$ and $\pi_2 : S^{m-1}_+ \times S^1 \to S^1$ are the projections. Let ${\mathcal F}_+$ be the foliation of $S^{m-1}_+ \times S^1$ whose leaves are
 $(S^{m-1}_+ \times S^1 )/{\mathcal F}_+ = \{ S^{m-1}_+ \times \{ \zeta \} : \zeta \in S^1 \}$
so that $I$ is a foliated map of
 $\left(S^{m-1}\times S^1\,,\,{\mathcal F}_+\right)$ onto $\left(S^m\setminus\Sigma\,,\,{\mathcal F}_\Sigma\right)$.
Finally (by Corollary~\ref{C-dtwist}) $\pi_1^\ast \, g_{m-1} + (w \circ \pi_1 )^2 \, \pi^\ast_1 \, g_1$
is $(T\calf_+)^\bot$-critical.
}
\end{example}

\subsection{Codimension-one foliations}\label{subsec:codim1fol}

Let $\calf$ be a codimension one foliation (i.e., $p=1$) of a closed manifold~$(M^{n+1},g)$,
 and let the normal subbundle ${\mathcal D}$ be spanned by a~unit field $N\in{\mathfrak X}_M$.
We have $T=0=\tilde T$ and
\[
 h(X,Y)=g(\nabla_X\,Y,\,N),\quad
 A_N(X)=-\nabla_X\,N,\quad
 {\mathcal A}=A_N^2\quad
 (X,Y\in T\calf),
\]
where $h$ is the scalar second fundamental form and $A_N$ the Weingarten operator of ${\calf}$.
Note that $\tilde H=\nabla_{N}\,N$ is the curvature vector of $N$-curves,
$\tilde h=\tilde H g^\perp$, $\widetilde\Psi=\tilde H^\flat\otimes \tilde H^\flat$ and
\[
 \tilde A_Y(N)=g(\tilde H,Y)N,\quad
 \widetilde{\mathcal A}=\sum\nolimits_{\,a} g(\tilde H, E_a)^2 N =\|\tilde H\|^2 N\quad
 (Y\in T\calf).
\]
 We also have, see \eqref{E-Rictop2} and \eqref{E-Rictop}:
 ${r}_{\,\mathcal D} =\Ric_g(N,N)\,g^\perp$ and
 ${r}_{\,\calf} =(R_N)^\flat$\,.

Power sums of the principal curvatures $k_1, \ldots, k_n$ (the eigenvalues of $A_N$) are given~by \cite{rw-m}
\begin{equation*}
 \tau_\alpha = k_1^\alpha + \ldots + k_n^\alpha = \tr(A_N^\alpha),\quad \alpha\ge0.
\end{equation*}
The $\tau$'s can be expressed using the elementary symmetric functions of the eigenvalues of $A_N$
(called \textit{mean curvatures} in the literature),
\[
  \sigma_0 =1,\quad
  \sigma_a = \sum\nolimits_{i_1 <\ldots <i_a} k_{i_1}\cdot\ldots\cdot k_{i_a}\quad (1\le a\le n).
\]
For example, $\sigma_1=\tau_1=\tr A_N$, and $2\,\sigma_2 = \tau_1^2 - \tau_2$ when $n>1$
(and $\tau_1^2 - \tau_2=0$ when $n=1$).
The equality $\Div N=-\tau_1$ holds and $H=\tau_1 N$ is the mean curvature vector of~${\calf}$.

 Note that $\Sm_{\,\rm ex}=\tau_1^{2}-\tau_2$ and definitions \eqref{E-S*main} read
\begin{equation}\label{E-S*-1}
 \Sm^*_{\,\rm mix}(M,g) =\left\{\begin{array}{cc}
   \Sm_{\,\rm mix}(M,g) -2\,\Sm_{\,\rm ex}(M, g) & {\rm for}~{\mathcal D}{\rm-variations},\\
   \Sm_{\,\rm mix}(M,g) & {\rm for}~{T\calf}{\rm-variations}\,.
  \end{array}\right.
\end{equation}
 By \eqref{E-genRicN-2}$_1$ (see also \eqref{E-RicNs1aa}), we obtain
\begin{equation}\label{E-genRicN-p1}
 (R_N + A_N^2)^\flat = \nabla_N\,h  -\tilde H^\flat\otimes\tilde H^\flat  +{\rm Def}_{\calf}\,\tilde H.
\end{equation}
Then we find (tracing \eqref{E-genRicN-p1} or by \eqref{E-RicNs1aa}$_2$ with $T=0$) (cf. also \cite{rw-m,wa1})
\begin{equation}\label{eq-ran1}
 \Ric_g (N,N) =\Div(\tau_1 N+\tilde H) +\tau_1^{2}-\tau_2.
\end{equation}
Since $M$ is a closed manifold, by \eqref{eq-ran1} and the Divergence Theorem, we represent \eqref{E-Jmix-N}~as
\begin{equation}\label{E-Jmix3}
 J_{\,\rm mix}(g) =\int_{\,M}(\tau_1^{2}-\tau_2)
 \,{\rm d}\vol_g\,;
\end{equation}
thus, $\Sm_{\,\rm mix}(M,g)=\Sm_{\,\rm ex}(M,g)$.
 By \eqref{E-divP}, $\Div(F\,N) = \nabla_N\,F - \tau_1 F$ for $(0,2)$-tensors $F$ on $M$.

Next, we calculate the gradient of $J_{\rm mix}$ with respect to adapted variations of a metric
(see \cite[$\S$~2.3.3]{rw-m} for ${T\calf}$-variations).

\begin{cor}[of Theorem~\ref{T-main01}]\label{T-main02}
Let $\calf$ be a codimension one foliation on a closed manifold $M^{n+1}$,
whose transversal distribution ${\mathcal D}$ is spanned by a nonzero vector field $N$.
If $g\in{\rm Riem}(M,\,{T\calf},\,{\mathcal D})$ is a critical point of the functional \eqref{E-Jmix-N} with respect to variations~\eqref{E-Sdtg}, then the following Euler-Lagrange equa\-tions are satisfied:
\begin{eqnarray}\label{E-RicNs0F}
 \tau_1^{2}-\tau_2 \eq -\Sm^*_{\,\rm mix}(M,g) \quad ({\rm for}~{\mathcal D}{\rm-variations}),\\
\label{E-RicNs1F}
 \Div\big((h -\tau_1\,\tilde g)N\big) \eq \frac12\,\big(\tau_1^{2}-\tau_2 -\Sm^*_{\,\rm mix}(M,g)\big)\,\tilde g
 \quad ({\rm for}~{T\calf}{\rm-variations}).
\end{eqnarray}
\end{cor}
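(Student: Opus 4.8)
The plan is to specialize the two Euler--Lagrange equations \eqref{E-main-i-hF} and \eqref{E-main-ii-hF} of Theorem~\ref{T-main01} to the codimension-one case $p=1$, using the explicit identities collected at the start of Section~\ref{subsec:codim1fol}. The first step is to record the terms that drop out: since $T=\tilde T=0$ for a codimension-one foliation, we have $\widetilde{\mathcal T}=0$, $\Phi_{\tilde T}=0$ and $\|\tilde T\|^2=0$, and, as $\tilde T^\sharp_a=0$ for all $a$, also $\tilde{\cal K}=0$. Moreover $\tilde h=\tilde H\,g^\perp$, so the combination $\tilde h-\tilde H\,g^\perp$ vanishes identically and $\Div(\tilde h-\tilde H\,g^\perp)=0$ in \eqref{E-main-i-hF}.

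The remaining input is the evaluation of the extrinsic quantities that survive. Because $\mathcal D$ is one-dimensional (spanned by $N$), one has $\|\tilde h\|^2=\|\tilde h(N,N)\|^2=\|\tilde H\|^2$, hence $\widetilde\Sm_{\,\rm ex}=\|\tilde H\|^2-\|\tilde h\|^2=0$; the same one-dimensionality yields $\Phi_{\tilde h}=0$ directly from its defining identity, which for $p=1$ reads $\langle\Phi_{\tilde h},S\rangle=S(\tilde H,\tilde H)-S(\tilde h(N,N),\tilde h(N,N))$. For $\Phi_h$ I would insert $H=\tau_1 N$ and $h(E_a,E_b)=g(A_N E_a,E_b)\,N$ into \eqref{E-Phi}, obtaining $\langle\Phi_h,S\rangle=(\tau_1^2-\tau_2)\,S(N,N)$ and therefore $\Phi_h=(\tau_1^2-\tau_2)\,g^\perp=\Sm_{\,\rm ex}\,g^\perp$. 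With these substitutions \eqref{E-main-i-hF} collapses to $\Sm_{\,\rm ex}\,g^\perp=\frac12(\Sm_{\,\rm ex}-\Sm^*_{\,\rm mix})\,g^\perp$, which rearranges to $\tau_1^2-\tau_2=-\Sm^*_{\,\rm mix}(M,g)$, i.e.\ \eqref{E-RicNs0F}; and \eqref{E-main-ii-hF} collapses to $\Div(h-H\tilde g)=\frac12(\Sm_{\,\rm ex}-\Sm^*_{\,\rm mix})\,\tilde g$, i.e.\ \eqref{E-RicNs1F} after rewriting the left-hand side.

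The one point that needs care is the bookkeeping between the two meanings of $h$ and $H$. In Theorem~\ref{T-main01} these denote the $\mathcal D$-valued second fundamental form and the mean curvature vector, whereas in the statement of the Corollary $h$ is the scalar second fundamental form. Writing $h=(\text{scalar }h)\,N$ and $H=\tau_1 N$ gives $h-H\tilde g=(h-\tau_1\tilde g)\,N$ (scalar $h$ on the right), which is exactly the divergence argument appearing in \eqref{E-RicNs1F}; likewise, since the space of $\mathcal D$-truncated symmetric $(0,2)$-tensors is one-dimensional when $p=1$, the tensor equation \eqref{E-main-i-hF} is equivalent to its scalar $(N,N)$-component, which is what produces the purely scalar identity \eqref{E-RicNs0F}. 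No new analytic difficulty arises beyond these substitutions: the closedness of $M$ has already been used in Theorem~\ref{T-main01} through the Divergence Theorem, and is not invoked afresh here.
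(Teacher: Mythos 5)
Your proposal is correct: each substitution checks out. Since $\tilde T=0$ you indeed get $\widetilde{\mathcal T}=0$, $\Phi_{\tilde T}=0$, $\tilde{\cal K}=0$; the identity $\tilde h=\tilde H\,g^\perp$ kills the divergence term in \eqref{E-main-i-hF}; the one-dimensionality of $\mathcal D$ gives $\widetilde\Sm_{\,\rm ex}=0$ and $\Phi_{\tilde h}=0$; and your evaluation $\Phi_h=(\tau_1^2-\tau_2)\,g^\perp=\Sm_{\,\rm ex}\,g^\perp$ is exactly right, so \eqref{E-main-i-hF} and \eqref{E-main-ii-hF} collapse to \eqref{E-RicNs0F} and \eqref{E-RicNs1F}, with the vector-valued versus scalar bookkeeping for $h$ and $H=\tau_1 N$ handled correctly. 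However, your route differs from the paper's. The paper does not re-specialize Theorem~\ref{T-main01}; its proof is a one-liner that takes the already-established Euler--Lagrange equations \eqref{E-main-1h}--\eqref{E-main-2h} of Corollary~\ref{C-main02} (the case of a \emph{one-dimensional} foliation with possibly non-integrable normal distribution), applies the duality $p\leftrightarrow n$ interchanging the roles of the two distributions, and then sets $T=0$. That argument buys brevity: no tensor needs to be recomputed, since the dual of an already-proven special case is invoked. Your argument buys self-containedness: it derives the corollary directly from the master theorem it is labelled a corollary of, makes explicit why each term vanishes rather than relying on the (unstated) transformation rules of all tensors under the distribution swap, and does not depend on Corollary~\ref{C-main02} or the computations in the one-dimensional example feeding into it. One small point you pass over silently, which is worth a sentence in a write-up: with $\tilde T=0$ and $\widetilde\Sm_{\,\rm ex}=0$ the constants \eqref{E-S*main} reduce to \eqref{E-S*-1}, so the symbol $\Sm^*_{\,\rm mix}(M,g)$ appearing in your collapsed equations agrees with the one in the corollary's statement.
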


\begin{proof}
From \eqref{E-main-2h} and \eqref{E-main-1h} with $p\leftrightarrow n$ and then using $T=0$ we have \eqref{E-RicNs0F} and \eqref{E-RicNs1F}.
\end{proof}

\begin{rem}\label{R-03F}\rm
(i) By \eqref{E-RicNs0F}, if $n=1$ then $J_{\,\rm mix}(g)=0$, and
if $n>1$, then $\sigma_2 = {\rm const}$,
resp., $\sigma_2 =0$ when variations do not preserve the volume of $(M,g)$.
 Replacing terms in \eqref{E-RicNs0F}--\,\eqref{E-RicNs1F} due to \eqref{E-genRicN-p1}--\,\eqref{eq-ran1}
(or using \eqref{E-main-1i} and \eqref{E-main-2i} with $\tilde T=0$),
we rewrite the Euler-Lagrange equations~as
\begin{eqnarray}\label{E-main-i2}
 &&\hskip-13mm\Ric_g(N,N) =
 \frac12\big(\Ric_g(N,N) -\Sm^*_{\,\rm mix}(M,g) +\Div(\tau_1 N + \tilde H) \big)
 \quad ({\rm for}~{\mathcal D}{\rm-variations}),\\
\label{E-main-ii1}
 &&\hskip-13mm (R_N +A_N^2)^\flat-\tau_1 h +\tilde H^\flat\otimes \tilde H^\flat -{\rm Def}_{\calf}\,\tilde H \\
\nonumber
 && =\frac12\,\big(\Ric_g(N,N) - \Sm_{\,\rm mix}(M,g) +\Div(\tau_1 N-\tilde H) \big)\,\tilde g
 \quad ({\rm for}~T\calf{\rm-variations}).
\end{eqnarray}

(ii) Equation \eqref{E-RicNs1F} for $T\calf$-variations \eqref{E-Sdtg}--\,\eqref{E-Sdtg-2} is equivalent to \cite[Example~2.5]{rw-m}, where notation $J_{\,\rm mix}=2\,E_N$ is used and the Euler-Lagrange equations
are given in the form
\begin{equation}\label{E-RicNs222}
 -\Div(T_1(h)N) = \frac12\,\left( \tau_1^{2}-\tau_2 - \Sm_{\,\rm mix}(M,g)\right)\,\tilde g.
\end{equation}
Here, $T_1(h)=\tau_1\tilde g-h$ is the first Newton transformation of $h$.
By the above,
\[
 -\Div(T_1(h)N) = \nabla_N\,h -\tau_1 h -\Div(\tau_1 N)\,\tilde g;
\]
hence, \eqref{E-RicNs222} reduces to \eqref{E-RicNs1F}.
\end{rem}

Since $\Phi_{\,\tilde h}=0=\widetilde\Sm_{\,\rm ex}$, from Proposition~\ref{C-03}(ii) we have the following.

\begin{prop}\label{C-2-9}
Let $\calf$ be a codimension-one foliation of a~closed Riemannian manifold $(M^{n+1},g)$ $(n>1)$
with normal distribution spanned by a unit vector field $N$.
Then $g$ is a critical point of $J_{\,\rm mix}$ with respect to adapted variations
preserving the volume of $(M,g)$ if and only if
\begin{equation}\label{E-Flow3}
 \tau_1=0,\quad \sigma_2={\rm const}\le0,\quad \nabla_N\,h=0.
\end{equation}
In particular, \eqref{E-Flow3} are satisfied trivially when $\calf$ is totally geodesic.
\end{prop}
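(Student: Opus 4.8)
The plan is to read criticality directly off the two codimension-one Euler--Lagrange equations of Corollary~\ref{T-main02}: equation \eqref{E-RicNs0F} governs ${\mathcal D}$-variations and \eqref{E-RicNs1F} governs $T\calf$-variations, both specialised to the volume-preserving setting through the values \eqref{E-S*-1} of $\Sm^*_{\,\rm mix}(M,g)$. Since an adapted variation splits into its ${\mathcal D}$- and $T\calf$-components, $g$ is a critical point with respect to all adapted volume-preserving variations exactly when both \eqref{E-RicNs0F} and \eqref{E-RicNs1F} hold. Throughout I will use that $\Sm_{\,\rm mix}(M,g)=\Sm_{\,\rm ex}(M,g)$ (the mean values coincide, see after \eqref{E-Jmix3}), so that by \eqref{E-S*-1} the ${\mathcal D}$-value of $\Sm^*_{\,\rm mix}(M,g)$ equals $-\Sm_{\,\rm ex}(M,g)$ while its $T\calf$-value equals $\Sm_{\,\rm ex}(M,g)$.

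For the forward implication I would first exploit the ${\mathcal D}$-equation. Substituting its value of $\Sm^*_{\,\rm mix}$, equation \eqref{E-RicNs0F} becomes $\tau_1^2-\tau_2=\Sm_{\,\rm ex}(M,g)$; since the left-hand side is the pointwise function $\Sm_{\,\rm ex}=\tau_1^2-\tau_2$, this forces $\Sm_{\,\rm ex}$ to equal its own mean, i.e. $\tau_1^2-\tau_2=2\sigma_2$ is constant (using $n>1$). Feeding this back makes the right-hand side of \eqref{E-RicNs1F} vanish, leaving $\Div\big((h-\tau_1\tilde g)N\big)=0$. Expanding with the identity $\Div(FN)=\nabla_N F-\tau_1 F$ and using that $\nabla_N\tilde g$ vanishes on $T\calf\times T\calf$, this reduces to
\[
 \nabla_N h-\tau_1 h=(N(\tau_1)-\tau_1^2)\,\tilde g .
\]

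The decisive step is to extract $\tau_1\equiv0$. Tracing the last identity over $T\calf$ gives $N(\tau_1)-\tau_1^2=n\,(N(\tau_1)-\tau_1^2)$, so $N(\tau_1)=\tau_1^2$ because $n>1$; substituting back yields $\nabla_N h=\tau_1 h$. A maximum-principle argument on the closed manifold then finishes it: at a point where $\tau_1$ attains its global maximum every directional derivative vanishes, in particular $N(\tau_1)=0$, whence $\tau_1^2=0$ there, so $\max\tau_1=0$; applying the same reasoning at the minimum gives $\min\tau_1=0$, hence $\tau_1\equiv0$. Consequently $\nabla_N h=\tau_1 h=0$, and $\Sm_{\,\rm ex}=\tau_1^2-\tau_2=-\tau_2=2\sigma_2$ is a nonpositive constant, so \eqref{E-Flow3} holds.

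The converse is a direct verification: assuming \eqref{E-Flow3}, the function $\tau_1^2-\tau_2=2\sigma_2$ is constant and equals $\Sm_{\,\rm ex}(M,g)$, so \eqref{E-RicNs0F} is a tautology; in \eqref{E-RicNs1F} the right-hand side vanishes while the left-hand side is $\Div(hN)=\nabla_N h-\tau_1 h=0$, so both equations hold and $g$ is critical. A totally geodesic foliation has $h=0$, hence $\tau_1=\tau_2=0$ and $\nabla_N h=0$, so \eqref{E-Flow3} is satisfied at once. I expect the only genuinely non-routine point to be the passage from $N(\tau_1)=\tau_1^2$ to $\tau_1\equiv0$: it is the compactness of $M$ (via the maximum principle) that rules out nonzero solutions, and it is essential that $n>1$ in the trace step, the case $n=1$ giving $\tau_1^2-\tau_2\equiv0$ and a trivial functional.
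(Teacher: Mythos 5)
Your proposal is correct and takes essentially the same route as the paper: both read criticality off the Euler--Lagrange equations \eqref{E-RicNs0F}--\eqref{E-RicNs1F} of Corollary~\ref{T-main02}, specialize $\Sm^*_{\,\rm mix}(M,g)$ via \eqref{E-S*-1} together with $\Sm_{\,\rm mix}(M,g)=\Sm_{\,\rm ex}(M,g)$, conclude from the ${\mathcal D}$-equation that $\Sm_{\,\rm ex}$ is constant, and trace the $T\calf$-equation to get $N(\tau_1)=\tau_1^2$ using $n>1$. The only differences are cosmetic: you eliminate $\tau_1$ by evaluating $N(\tau_1)=\tau_1^2$ at the extrema of $\tau_1$ on the closed manifold, while the paper invokes blow-up of the ODE $y'=y^2$ along complete $N$-curves (both arguments rest on compactness), and you also write out the converse verification, which the paper leaves implicit.
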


\begin{proof}
By \eqref{E-RicNs0F} and \eqref{E-S*-1}, we have $\Sm_{\,\rm ex} =\Sm_{\,\rm mix}(M,g)$,
hence $\sigma_2=-\tau_2={\rm const}\le0$.
Tracing \eqref{E-RicNs1F} and using \eqref{E-S*-1}, we obtain
\[
 (1-n)\big(N(\tau_1)-\tau_1^2\big) = \frac n2\,(\tau_1^{2}-\tau_2 -\Sm_{\,\rm mix}(M,g))
\]
Since $n>1$, we obtain the ODE $N(\tau_1)=\tau_1^2$ on complete $N$-curves,
which has trivial solution $\tau_1=0$. Then, by \eqref{E-RicNs1F}, we have $\nabla_N\,h=0$.
\end{proof}

\begin{cor}\label{C-dim3}
Let $\calf$ be a 2-dimensional foliation on a closed 3-dimensional Riemannian manifold $(M,g)$
with normal distribution spanned by a unit vector field $N$.
Then $g$ is a critical point of $J_{\,\rm mix}$ with respect to adapted variations
preserving the volume of $(M,g)$ if and only if

$1)$ the principal curvatures $k_1,k_2$ of the leaves are constant on $M$ and $k_1+k_2=0$;

$2)$ the corresponding eigenvectors $E_1$ and $E_2$ are parallel in the $N$-direction.

\noindent
In particular, $M$ is parallelizable: $\{E_1,E_2,N\}$ is the global orthonormal frame.
\end{cor}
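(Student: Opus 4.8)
The plan is to specialize Proposition~\ref{C-2-9} to $n=2$ and then unpack its three analytic conditions into the two stated geometric ones. Since $\dim M=3$ and $\dim\calf=2$, we are in the codimension-one setting with $n=2>1$ and $p=1$, so Proposition~\ref{C-2-9} applies verbatim: $g$ is a critical point of $J_{\,\rm mix}$ among adapted variations preserving the volume if and only if $\tau_1=0$, $\sigma_2=\mathrm{const}\le0$ and $\nabla_N h=0$. With two principal curvatures one has $\tau_1=k_1+k_2$, $\tau_2=k_1^2+k_2^2$ and $\sigma_2=k_1k_2$. Thus $\tau_1=0$ is exactly $k_1+k_2=0$; substituting $k_2=-k_1$ gives $\sigma_2=-k_1^2$, so $\sigma_2$ being a constant is equivalent to $k_1^2=\mathrm{const}$, i.e. $k_1,k_2$ are constant on $M$, and the sign constraint $\sigma_2\le0$ is then automatic. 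This yields condition~$1$.

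The real work is to read $\nabla_N h=0$ in the eigenframe, which gives condition~$2$. Consider first the generic case $\sigma_2<0$, so that $k_1=-k_2\neq0$ and the two eigenvalues are distinct; then I may choose a local $g$-orthonormal frame $\{E_1,E_2\}$ of $T\calf$ with $A_N E_a=k_a E_a$, whence $h=k_1\,E_1^\flat\!\otimes E_1^\flat+k_2\,E_2^\flat\!\otimes E_2^\flat$ on $T\calf$ and $h(N,\cdot)=0$. Writing $\nabla_N E_a=\sum_b c_{ab}E_b+d_a N$, orthonormality forces $c_{ab}=-c_{ba}$ (so $c_{11}=c_{22}=0$), while $d_a=g(\nabla_N E_a,N)=-g(E_a,\tilde H)$ records the $\tilde H$-part. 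Since $k_1,k_2$ are constant, a short computation of $\nabla_N h$ on the pairs $(E_1,E_2)$ and $(N,E_a)$ gives
\[
 (\nabla_N h)(E_1,E_2)=(k_1-k_2)\,c_{12},\qquad (\nabla_N h)(N,E_a)=k_a\,d_a .
\]
Hence $\nabla_N h=0$ together with $k_1\neq k_2$ and $k_a\neq0$ forces $c_{12}=0$ and $d_1=d_2=0$, i.e. $\nabla_N E_1=\nabla_N E_2=0$, which is condition~$2$. The one delicate point, and the main obstacle, is precisely this passage from the single tensor equation $\nabla_N h=0$ to the full vanishing $\nabla_N E_a=0$: it is exactly the hypothesis $\sigma_2<0$ (distinct, nonzero principal curvatures) that lets the off-diagonal $T\calf$-component and the mixed $N$-component of $\nabla_N h$ be separated, the latter being what kills the $\tilde H$-contribution $d_a$.

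Conversely, if conditions~$1$ and~$2$ hold then $\tau_1=k_1+k_2=0$ and $\sigma_2=k_1k_2=-k_1^2=\mathrm{const}\le0$, while $h=k_1\,E_1^\flat\!\otimes E_1^\flat+k_2\,E_2^\flat\!\otimes E_2^\flat$ with constant $k_a$ and $\nabla_N E_a=0$ gives $\nabla_N h=0$ directly; Proposition~\ref{C-2-9} then yields criticality. Finally, when $\sigma_2<0$ the constant eigenvalues are distinct, so the two eigendistributions are smooth global line fields; orienting them (using orientability of $M$) produces a global orthonormal frame $\{E_1,E_2,N\}$, and $M$ is parallelizable. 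The degenerate case $\sigma_2=0$ is the totally geodesic foliation ($k_1=k_2=0$, $h=0$), where the eigenvectors are undetermined and criticality is the trivial instance already recorded in Proposition~\ref{C-2-9}.
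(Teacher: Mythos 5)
Your proposal is correct and takes essentially the paper's own route: Corollary~\ref{C-dim3} appears in the paper as an immediate specialization of Proposition~\ref{C-2-9} to $n=2$ (no separate proof is given), and your translation of $\tau_1=0$, $\sigma_2=\mathrm{const}\le 0$ into condition~1) together with the eigenframe computation $(\nabla_N h)(E_1,E_2)=(k_1-k_2)\,c_{12}$, $(\nabla_N h)(N,E_a)=k_a d_a$ unpacking $\nabla_N h=0$ into condition~2) is exactly the intended argument, including the treatment of the generic case $\sigma_2<0$ versus the totally geodesic case. The one step worth flagging is that passing from $c_{12}=0$ to full parallelism $\nabla_N E_a=0$ (which also forces $\tilde H=\nabla_N N=0$) uses the mixed components $(\nabla_N h)(N,E_a)$, i.e., the face-value reading of $\nabla_N h=0$ in Proposition~\ref{C-2-9} as an equation of $(0,2)$-tensors on all of $M$ rather than only its $T\calf$-truncated part (which is all that adapted variations, having no mixed components, can detect variationally); since this full-tensor reading is how the paper states Proposition~\ref{C-2-9} and is what the corollary's conclusion~2) requires, your proof is faithful to the paper.
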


\begin{example}[Foliations by level hypersurfaces]\label{ex:6}\rm
Let $(M^{n+1},g)$ be a Riemannian manifold, $u \in C^\infty (M, {\mathbb R})$, and let $\mathcal F$ be the foliation
of $U = M \setminus {\rm Crit}(u)$ by level hypersurfaces of $u$.
Then $N =\lambda^{-1}\,\nabla u$, where $\lambda\equiv\|\nabla u\|$ and
 $\tau_1 = -\Div N = -\lambda^{-1}\Div(\nabla u) -(\nabla u)(\lambda^{-1})$,~i.e.,
\begin{equation}\label{e:S.81}
 \tau_1 = -\lambda^{-1}\,\Delta u + \lambda^{-2}\,(\nabla u)(\lambda ).
\end{equation}
The geometric background needed to write the Euler-Lagrange equations
of Corollary~\ref{T-main02} is presented in \cite[p. 104--116]{Ton}.
We need to rephrase a few facts in \cite{Ton} under the conventions adopted in this paper.
Let $\pi:TU\to\nu({\mathcal F}) \equiv TU/T\calf$ be the transverse bundle.
Let $h$ be the second fundamental form of $\mathcal F$ in $(U, g)$, i.e.,
$h(X,Y) = \pi(\nabla_X Y)$ for any $X,Y \in\mathfrak{X}_{\widetilde{\mathcal D}}$.
Then
\begin{equation}\label{e:S.82}
 h(X,Y) = - \lambda^{-1}  \, {\rm Hess}_u (X,Y) \, \pi(N)
\end{equation}
by \cite[(8.5),~p.~105]{Ton}.
Here ${\rm Hess}_u$ is the Hessian of $u$, i.e., ${\rm Hess}_u (V,W)=(\nabla_V \,{\rm d} u ) W$ for any $V,W\in\mathfrak{X}_{U}$.
Let $\sigma : \nu({\mathcal F})\to T^\bot\calf \equiv {\mathcal D}$ be the natural bundle isomorphism,
i.e., $\sigma(s) = V^\bot$ for any $s\in\nu({\mathcal F})$ and any $V \in\mathfrak{X}_{U}$ such that $\pi(V) = s$.
Here $V^\bot$ is the ${\mathcal D}$-component of $V$ with respect to the decomposition $TU = T\calf \oplus {\mathcal D}$. The metric $g_Q$ induced by $g$
on the transverse bundle is $g_Q (r, s) = g(\sigma (r) , \sigma (s))$ for any $r,s \in Q \equiv \nu ({\mathcal F})$.
The Weingarten operator $A_N : T\calf \to T\calf$ is given~by
\[
 g(A_N(X)\, ,\, Y) = g_Q (h(X,Y)\,,\,\pi(N)).
\]
Then (by (\ref{e:S.82}))
\begin{equation}\label{e:S.83}
 g(A_N X, Y) = - \lambda^{-1}\, {\rm Hess}_u (X,Y).
\end{equation}
Let $\nu = \lambda^{-1} \,{\rm d} u$ be the transverse volume element. Let $\{ E_a : 1 \leq a \leq n \}$ be a local orthonormal frame of $T\calf$ and
$\{ \omega^a : 1 \leq a \leq n \}$ the dual coframe, i.e., $\omega^i (E_j ) = \delta^i_j$ and $\omega^i (N) = 0$.
Then $\{ E_a \, , \, N : 1 \leq a \leq n \}$ is a local orthonormal frame of $TU$
and $\{ \omega^a \, , \, \nu : 1 \leq a \leq n \}$ is the corresponding dual coframe,
and (\ref{e:S.83}) may be locally written
\begin{equation}\label{e:S.84}
 A_N X = - \lambda^{-1} \sum\nolimits_{a} {\rm Hess}_u (X, E_a )\, E_a \,.
\end{equation}
Then (by (\ref{e:S.84}))
\[
 \tau_2 = {\rm Tr} \left( A_N^2 \right) = \sum\nolimits_{a} g(A_N E_a \, , \, A_N E_a )
 = \lambda^{-2} \sum\nolimits_{a,b} ({\rm Hess}_u (E_a \, , \, E_b))^2,
\]
so that (by ${\rm Hess}_u (N,N) = N^2 (u)$)
\begin{equation}\label{e:S.85}
 \tau_2 = \lambda^{-2} \left\{ \left\| {\rm Hess}_u \right\|^2 - (N^2(u))^2 \right\} .
\end{equation}
Note that $N(u) = \lambda$; hence,
\begin{equation}\label{e:S.86}
 N^2 (u) = \frac{1}{\| \nabla u \|} \, g \left( \nabla u \, , \, \nabla (\| \nabla u \| ) \right) .
\end{equation}
Then (by (\ref{E-Jmix3}), (\ref{e:S.81}) and (\ref{e:S.85})-(\ref{e:S.86}))
\begin{eqnarray}\label{e:S.87}
\nonumber
 J_{\,\rm mix} (g) \eq \int_{M} \Big\{ \frac{1}{\| \nabla u \|^2}
 \,\Big(\Delta u -\frac{1}{\|\nabla u \|} \,g\left(\nabla f\, ,\,\nabla (\| \nabla f \| ) \right) \Big)^2  \\
 \minus \frac{1}{\| \nabla u \|^2} \, \Big( \left\| {\rm Hess}_u \right\|^2 - \frac{1}{\| \nabla u \|} \, g \left( \nabla u \, , \, \nabla (\| \nabla u \| ) \right) \Big)  \Big\} \, d \, {\rm vol}_g \, .
\end{eqnarray}
By \eqref{E-RicNs0F}, a metric $g$ on $U$ is critical for (\ref{e:S.87}) with respect to $\mathcal D$-variations if
$\tau_1^2-\tau_2=0$, that is trivial for $n=1$, or (by (\ref{e:S.81}) and (\ref{e:S.85}))
 $\left(  \Delta u - \lambda^{-1} \, (\nabla u)(\lambda ) \right)^2
 = \left\| {\rm Hess}_u \right\|^2 - N^2 (u)$
that is (by~(\ref{e:S.86}))
\begin{equation}\label{e:S.88}
 \Big( \Delta u - \frac{(\nabla u )(\| \nabla u \| )}{\| \nabla u \|} \Big)^2
 =\left\| {\rm Hess}_u \right\|^2
 - \Big(\frac{(\nabla u )(\| \nabla u \| )}{\| \nabla u \|}\Big)^2 \, .
\end{equation}
\end{example}

\begin{example}[Foliations in thermodynamics]\rm
\,Let $M \subset {\mathbb R}^k$ be the phase space of a thermodynamical system.
Let $P, \, V, \, E \in C^\infty (M, {\mathbb R})$ be respectively the pressure, volume and internal energy. If $\omega = d E + P \, d V \in \Omega^1 (M)$ then $\omega \wedge d \omega = 0$ by the second principle of thermodynamics (cf. e.g. \cite{Car}), i.e. the distribution $\tilde{\mathcal D} = {\rm Ker} (\omega )$ is involutive. Let $\mathcal F$
be the foliation of $M$ such that $T({\mathcal F}) = \tilde{\mathcal D}$. Let $g \in {\rm Riem}(M)$.
If $\lambda = \| \omega \|$ then $N = \lambda^{-1} \left( \nabla E + P \, \nabla V \right)$.  Next
\[
 A_N  = - \frac{1}{\lambda} \left( \nabla \nabla E + (d P ) \otimes \nabla V + P \nabla \nabla V \right) + \frac{1}{\lambda^2}\,(d \lambda )\otimes \left( \nabla E + P \nabla V\right)
\]
so that
\begin{eqnarray*}
 \tau_1 = - \lambda^{-1} \left( \Delta E + (\nabla V)(P) + P \, \Delta V \right)
 +\lambda^{-2} \left( (\nabla E)(\lambda ) + P (\nabla V) (\lambda ) \right) ,\\
 \tau_2 = \lambda^{-2} \left( \| {\mathcal E} \|^2 + 2 P \, g^\ast ({\mathcal E} , {\mathcal V}) + P^2 \, \| {\mathcal V} \|^2 + \| \nabla V \|^2 \, \| \nabla P \|^2 \right) +\\
 + 2 \lambda^{-2} \,\left( g(\nabla_{\nabla P} \nabla E , \nabla V )  + P \, g(\nabla_{\nabla P} \nabla V \, , \, \nabla V ) \right) +\\
 + \lambda^{-4}\, \| \nabla \lambda \|^2 \left( \|\nabla E \|^2 + 2 P \, g(\nabla E , \nabla V) + P^2 \, \| \nabla V \|^2 \right),
\end{eqnarray*}
where we have set ${\mathcal V} X = \nabla_X \nabla V$ and ${\mathcal E} X = \nabla_X \nabla E$ for every $X \in \mathfrak{X}_{\mathcal F}$.  All processes in a simple mechanically isolated thermodynamic system are isochoric,
i.e., $V =$ const. If this is the case $\mathcal F$ is the foliation of $M \setminus {\rm Crit}(E)$ by surfaces
of constant internal energy, a situation covered by Example~\ref{ex:6}, i.e. a metric $g$ is critical with respect to ${\mathcal D}$-variations if $\Delta E = f_\pm (E)$, where
$f_\pm (E) = \| \nabla E \|^{-1} (\nabla E)(\| \nabla E \| )  \pm \left[ \| \nabla \nabla E \|^2 + \| \nabla E \|^{-2} \| \nabla (\| \nabla E \| ) \|^2 \right]^{\frac{1}{2}}$.
\end{example}

\begin{example}[Foliations by level lines on open Riemann surfaces]\rm
Let $M$ be a closed surface, then $J_{\,\rm mix}(g) = 2\pi\chi(M)$ for any $g \in {\rm Riem}(M)$.
Hence, $D J_{\,\rm mix}(g) = 0$ (any metric is critical). However, if $M$ is an open Riemann surface,
the problem of looking for stationary metrics is well posed and nontrivial. Here, one works with a known functional, because $\Sm_{\, \rm mix}(g)$ is merely the Gaussian curvature of the metric $g$, as observed in the introduction to this paper, yet its domain is the variety ${\rm Riem}(M,\,\widetilde{\mathcal D},\,{\mathcal D})$ depending
on the given line fields $(\widetilde{\mathcal D},\,{\mathcal D})$ on $M$.
Cohn-Vossen first proved that the total curvature of a finitely connected complete noncompact
Riemannian surface $M$ is bounded above by $2\pi\chi(M)$.
The total curvature of such a manifold $M$ is not a topological invariant but is dependent
upon the choice of a Riemannian metric, see \cite{sst}.

The results in Example~\ref{ex:6} apply to foliations by level lines on real surfaces
(as studied by R. Jerrard \& L. Rubel, \cite{JeRu}) associated to harmonic functions
(e.g. real parts of holomorphic functions, cf. also \cite{Jer}).
Let $D\subset{\mathbb C}$ be a relatively compact domain and $f = u + i v$ a holomorphic function,
where $u,v : D\to{\mathbb R}$ its real and imaginary parts.
Let $\calf$ be the foliation of $U = D\setminus{\rm Crit}(u)$ by level lines of $u$
and $\mathcal D$ is spanned by $\nabla u$.

If, for instance, $f(z)= -i\,z$ with $z\in{\mathbb C}$ then, for an arbitrary metric $g\in{\rm Riem}({\mathbb C})$,
\[
 \nabla u = g^{i2} \, \frac{\partial}{\partial x^i}\, , \;\;\; \lambda = \sqrt{g^{22}}\,,\;\;\;
 (\nabla u)(\lambda) =\frac{1}{2\lambda}\big(\,g^{12}\,\left(g^{22}\right)_x + g^{22}\,\left(g^{22}\right)_y \big),
\]
\[
 \Delta u = \frac{1}{\sqrt{G}} \, \frac{\partial}{\partial x^i} \big( \sqrt{G} \, g^{i2} \big),
 \;\;\; {\rm Hess}_u \Big( \frac{\partial}{\partial x^i}\,,\,\frac{\partial}{\partial x^j} \Big)
 = - \Big\{ \begin{array}{c} 2 \\ ij \end{array} \Big\}\,,
\]
\[
 \left\| {\rm Hess}_u \right\|^2 = g^{ij} g^{k\ell} \Big\{ \begin{array}{c} 2 \\
 ik \end{array} \Big\} \Big\{ \begin{array}{c} 2 \\ j\ell \end{array} \Big\}  \, ,
\]
\[
 \Sm_{\rm mix} = K
 =\frac{1}{\lambda^2} \Big\{ \frac{1}{\sqrt{G}} \, \frac{\partial}{\partial x^i} \big(\sqrt{G}\, g^{i2}\big)
 -\frac{1}{2 \lambda^2} \big( g^{12} \left( g^{22} \right)_x + g^{22} \left( g^{22} \right)_y \big) \Big\}^2 -
\]
\[
 -\frac{1}{\lambda^2} \Big\{ g^{ij} g^{k\ell} \Big\{ \begin{array}{c} 2 \\ ik \end{array} \Big\}
 \Big\{ \begin{array}{c} 2 \\ j \ell \end{array} \Big\} -
\frac{1}{2 \lambda^2} \big( g^{12} \left( g^{22} \right)_x + g^{22} \left( g^{22} \right)_y \big) \Big\} ,
\]
where $G = \det \left[ g_{ij} \right]$. Let $\Omega \subset {\mathbb C}$.
By Corollary~\ref{T-main02}, an adapted metric $g\in{\rm Riem}({\mathbb C})$
with $K(g) = 0$ (that implies $K(M, g) = 0$) is $\mathcal D$-critical (where ${\mathcal D} = {\mathbb R}\nabla u$).

Euclidean metric $g_0 = {\rm d x}^2 + {\rm d y}^2$ is trivially a $\mathcal D$-critical point. Let us look for critical metrics
conformal to Euclidean metric, i.e., $g = e^\phi g_0$ with $\phi \in C^\infty ({\mathbb C})$. Then
\[
 g^{ij} = e^{-\phi} \, \delta^{ij} \, , \;\;\;  G = e^{2 \phi} \, , \;\;\; \lambda = e^{- \phi /2} \, , \;\;\;
 \Big\{\begin{array}{c} i \\ jk \end{array} \Big\} = \frac{1}{2} \left( \phi_{|k} \, \delta^i_j + \phi_{|j} \, \delta^i_k - \delta_{jk} \, \phi_{|i} \right) ,
\]
\[
 \frac{1}{\sqrt{G}} \, \frac{\partial}{\partial x^i} \big( \sqrt{G} \, g^{i2} \big) = 0, \;\;\;
 g^{ij} g^{k\ell} \Big\{ \begin{array}{c} 2 \\ ik \end{array} \Big\} \Big\{ \begin{array}{c} 2 \\ j \ell \end{array} \Big\} = \frac{1}{2} \, e^{-2 \phi} \left( \phi_x^2 + \phi_y^2 \right) ,
\]
\[
 g^{12} \left( g^{22} \right)_x + g^{22} \left( g^{22} \right)_y = - e^{- 2 \phi} \phi_y,\quad
 K(e^\phi g_0 ) = - \frac{1}{2} \, e^{-\phi}
 \Big( \phi_x^2 + \frac{1}{2} \, \phi_y^2 + e^\phi \phi_y \Big);
\]
hence, $g$ is $\mathcal D$-critical when $\phi$ is a solution to the PDE
\begin{equation}\label{e:S.96}
 \phi_x^2 + \phi_y^2/2 + e^\phi \phi_y = 0.
\end{equation}
For instance, the nonconstant solutions $\phi$  to (\ref{e:S.96}) with $\phi_x = 0$ are $\phi (y) = - \log |2 y + c|$, $c \in {\mathbb R}$. Consequently, if $\mathcal F$ is the model foliation by lines parallel to the $x$-axis, the adapted metrics
 $g_c = (2 y + c)^{-1} \, \left( {\rm d x}^2 + {\rm d y}^2 \right),\ c\in{\mathbb R}$,
are $\mathcal D$-critical on the half-plane $U_c \equiv \{ (x,y) \in {\mathbb R}^2 : y > - c/2 \}$.
Similarly if $f(z) = z$ (producing the foliation of ${\mathbb R}^2$ by lines parallel to the
$y$-axis) the metrics $g = (2 x + c)^{-1} \, \{ {\rm d x}^2 + {\rm d y}^2 \}$ are $\mathcal D$-critical.
\par
Let us go back to foliations $\mathcal F$
of $U = D \setminus {\rm Crit}(u)$ by level lines of $u$. The~unit normal to $\mathcal F$
in Euclidean plane $({\mathbb R}^2 , g_0 )$ is $N = \lambda^{-1} ( u_x \, \partial_x + u_y \, \partial_y )$ with $\lambda = ( u_x^2 + u_y^2 )^{\frac{1}{2}}$.
Let $\kappa\in T^1_0 U$ be given by $X \, \rfloor \, \kappa = 0$ and $\kappa (Z) = {\rm Tr}(A_Z)$ for any $X \in\mathfrak{X}_{\widetilde{\mathcal D}}$ and $Z \in {\mathfrak X}_{\mathcal D}$. Then $\tau_1 = \kappa (N)$. By a result in \cite{JeRu} (cf. also \cite[p.~112]{Ton})
\begin{equation}\label{e:S.89}
\tau_1 = \lambda^{-3}\,{\mathcal L}(u),
\end{equation}
where
 ${\mathcal L}: u \to \left( u_x^2 - u_y^2 \right) u_{xx} + 2 \, u_x \, u_y \, u_{xy}$
is the nonlinear operator.
Moreover,
$\left\|{\rm Hess}_u\right\|^2 = 2\left( u_{xx}^2 + u_{xy}^2 \right)$ and $N^2(u) = \lambda^{-2}\, {\mathcal L}(u)$;
hence (by (\ref{e:S.85})),
\begin{equation}
\tau_2 =  2 \lambda^{-2} \left( u_{xx}^2 + u_{xy}^2 \right) - \lambda^{-4} \, {\mathcal L}(u) .
\label{e:S.90}
\end{equation}
Then (by (\ref{e:S.89})-(\ref{e:S.90})) the Euclidean metric ${\rm d x}^2 + {\rm d y}^2$ is ${\mathbb R} \nabla u$-critical if
\begin{equation}\label{e:S.91}
- 2 \left( u_{xx}^2 + u_{xy}^2 \right) + \lambda^{-2} \, {\mathcal L}(u) + \lambda^{-4} \, {\mathcal L}(u)^2 = 0.
\end{equation}
If, for instance, $f(z)=z^2$ then $U={\mathbb C}\setminus\{0\}$ and $\lambda=2 r$ with $r=(x^2+y^2 )^{\frac{1}{2}}$. Also ${\mathcal L}(u) = 8 u$ and an inspection of (\ref{e:S.91}) shows that Euclidean metric is not ${\mathbb R} \nabla u$-critical.
\end{example}

The reader can find more examples for codimension one foliations in \cite{bdrs}.

\baselineskip=11.7pt


\begin{thebibliography}{99.}%

\bibitem{aub}
T.\,Aubin, \textit{Some nonlinear problems in Riemannian geometry}, Springer, 1998.

\bibitem{bdrs}
E.\,Barletta, S.\,Dragomir, V.\,Rovenski, and M.\,Soret,
{\em Mixed gravitational field equations on globally hyperbolic spacetimes}, Class. Quantum Grav., 30 (2013), 26 pp.

\bibitem{bf}
A.\,Bejancu and H.\,Farran, \textit{Foliations and Geometric Structures}. Springer-Verlag, 2006

\bibitem{cc1}
A.\,Candel and L.\,Conlon, \textit{Foliations, I\,--\,II}, AMS, Providence, 2000.

\bibitem{cw2005}
M.\,Czarnecki and P.\,Walczak, {\it Extrinsic geometry of foliations}.
In: Foliations 2005, ed. P. Walczak et all., pp. 149--167; World Scientific, Singapore (2006).

\bibitem{Car}
C.\, Carath\'eodory, {\em Untersuchungen \"uber die Grundlagen der Thermodynamik},  Math. Ann., 67 (1909), 355-386.

\bibitem{Jer}
R.\,Jerrard, {\em Curvatures of surfaces associated with holomorphic functions}, Colloq. Math., 21 (1970), 127--132.

\bibitem{JeRu}
R.\,Jerrard and L.\,Rubel, {\em On the curvature of the level lines of a harmonic function}, Proc. Amer. Math. Soc., 14 (1963), 29--32.

\bibitem{lp1993}
R.\,Langevin and C.\,Possani, {\it Total curvature of foliations}, Illinois J. Math., {37} (1993), 508--524.

\bibitem{lr1996}
 R.\,Langevin and H.\,Rosenberg, {\it Fenchel type theorems for submanifolds of $S^n$},
 Comment. Math. Helv., {71} (1996), 594--616.

 \bibitem{lw2008}
R.\,Langevin and P.\,Walczak, {\it Conformal geometry of foliations}, Geom. Dedicata 132 (2008) 135--178.

\bibitem{N1983}
A.\,M.\,Naveira, {\em A classification of Riemannian almost-product manifolds}, Rend. Mat., 7, no.~3, (1983),  577--592.

\bibitem{pr}
R.\,Ponge and H.\,Reckziegel,  {\em Twisted products in pseudo-Riemannian geometry}, Geom. Dedicata,  {48} (1993), 15--25.

\bibitem{r3}
R.\,Reilly, {\em Variational properties of functions of the mean curvatures for hypersurfaces in space forms},
J. Diff. Geom. 1973, v. 8 (3), 465--477.

\bibitem{rov-m}
 V.\,Rovenski, {\em Foliations on Riemannian Manifolds and Submanifolds}, Birkh\"{a}user, 1998.

\bibitem{r2010}
 V.\,Rovenski, \textit{On solutions to equations with partial Ricci curvature},
 to be published in J. of Geometry and Physics
 (see  {\em On the partial Ricci curvature of foliations}, {\tt arXiv:1010.2986})

\bibitem{rw2008} V.\,Rovenski and P.\,Walczak,
{\it Variational formulae for the total mean curvatures of a codimen\-sion-one distribution}.
Proc. of 8-th Int. Colloq., Santiago-de Compostela, Spain, July 2008, 83--93, World Scientific, 2009.

\bibitem{rw-m}
 V.\,Rovenski and P.\,Walczak, {\em Topics in Extrinsic Geometry of Codimension-One Foliations}, Springer Briefs in Mathematics, Springer-Verlag, 2011.

\bibitem{rovwol}
 V.\,Rovenski and R.\,Wolak, {\em Deforming metrics of foliations}, Centr. Eur. J. Math. ,11, no.~6, (2013), 1039--1066.

\bibitem{rz2012}
 V.\,Rovenski and L.\,Zelenko, Prescribing the positive mixed scalar curvature of totally geodesic foliations,
 in ``\textit{Foliations-2012}", World Sci., Singapore 2013, pp. 163--203.

\bibitem{sw} P. Schweitzer and P. Walczak, {\it  Prescribing mean curvature vectors for foliations}.
Illinois J. Math., {48} (2004), 21--35.

\bibitem{sst}
K.\,Shiohama, T.\,Shioya, and M.\,Tanaka,
\textit{The Geometry of Total Curvature on Complete Open Surfaces}, Cambridge University Press, 2003.

\bibitem{Ton} P. Tondeur, {\em Foliations on Riemannian manifolds}, Springer-Verlag, 1988.

\bibitem{wa1}
P.\,Walczak, {\em An integral formula for a Riemannian manifold with two or\-thogonal  complementary distributions}. Colloq. Math., {58} (1990), 243--252.

\end{thebibliography}
\end{document}